\theoremstyle{plain}
\newtheorem{theorem}{Theorem}
\newtheorem{proposition}[theorem]{Proposition}
\newtheorem{corollary}[theorem]{Corollary}
\newtheorem{lemma}[theorem]{Lemma}
\newtheorem{property}[theorem]{Property}
\theoremstyle{definition}
\newtheorem{remark}[theorem]{Remark}
\numberwithin{equation}{section}
\numberwithin{theorem}{section}
\DeclareMathOperator{\ap}{ap}
\DeclareMathOperator{\dom}{dom}
\DeclareMathOperator{\Bx}{Box}
\DeclareMathOperator{\spn}{span}
\begin{document}

\title{APPROXIMATE DIFFERENTIABILITY OF MAPPINGS OF CARNOT--CARATH\'EODORY SPACES
\thanks{The research was partially supported by the Russian Foundation
for Basic Research (Grants 10--01--00662-a and 11--01--00819-a),  the State Maintenance Program
for Young Russian Scientists and the Leading Scientific Schools of the
Russian Federation (Grant NSh-921.2012.1), and  the Federal Target Grant "Scientific and educational
personnel of innovation Russia" for 2009-2013 (Agreements
No. 8206 and 8212).}}
\date{\empty}
\author{S.~G. Basalaev, S.~K. Vodopyanov
}

\maketitle

\noindent\textbf{Key words: }
approximate differentiability, Carnot--Carath\'eodory space.

\noindent\textbf{AMS Mathematics Subject Classification:}
Primary: 53C17, 58C25;
Secondary: 28A75

\noindent\textbf{Abstract:} {
  We study the approximate differentiability of measurable mappings of
  Carnot--Carath\'eodory spaces. We show that the approximate
  differentiability almost everywhere is equivalent to
  the approximate differentiability along the basic horizontal vector
  fields almost everywhere. As a geometric tool we prove the
  generalization of Rashevsky--Chow theorem for $C^1$-smooth
  vector fields. The main result of the paper extends theorems
  on approximate differentiability proved by Stepanoff (1923, 1925)
  and Whitney (1951) in Euclidean spaces and by Vodopyanov (2000)
  on Carnot groups.
}

\tableofcontents

\section*{Introduction}
\addcontentsline{toc}{section}{Introduction}

In 1919  Rademacher proved a theorem that is the well-known
result of the theory of functions of real variable.
\begin{theorem}[\cite{Bib:Rademacher}] \label{Theorem:Rademacher}
    If $U$ is an open subset in
    $\mathbb{R}^n$ and $f : U \to \mathbb{R}^m$ is a Lipschitz mapping
    then $f$ is differentiable at almost all points of the set $U$.
\end{theorem}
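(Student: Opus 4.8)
The plan is to reduce to scalar targets and then build the candidate differential out of partial derivatives, the genuine difficulty being the passage from directional to full differentiability. Since a map $f=(f_1,\dots,f_m)$ is differentiable at a point exactly when each coordinate $f_i$ is, and a finite intersection of full-measure sets has full measure, I may assume $m=1$. Denote by $L$ the Lipschitz constant of $f$.

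First I would establish existence of directional derivatives almost everywhere. Fix a unit vector $v$. For $\mathcal{H}^{n-1}$-almost every line in direction $v$, the restriction $t\mapsto f(x+tv)$ is Lipschitz on an interval, hence differentiable for almost every $t$ by the classical one-dimensional theorem on differentiability of Lipschitz functions. Fubini's theorem then gives a full-measure set on which the directional derivative $D_v f(x)=\lim_{t\to 0}\frac{f(x+tv)-f(x)}{t}$ exists. In particular the partial derivatives exist a.e., and I set $\nabla f(x)=(D_{e_1}f(x),\dots,D_{e_n}f(x))$.

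Second, I would show that $v\mapsto D_v f(x)$ is linear for almost every $x$, i.e.\ $D_v f(x)=\langle \nabla f(x),v\rangle$ a.e. Because the difference quotients are bounded by $L$, dominated convergence applied to the identity $\int \frac{f(x+tv)-f(x)}{t}\,\varphi(x)\,dx=-\int f(x)\,\frac{\varphi(x)-\varphi(x-tv)}{t}\,dx$ for $\varphi\in C_c^\infty(U)$ yields $\int D_v f\,\varphi=-\int f\,D_v\varphi$. Since the right-hand side is linear in $v$, testing against all $\varphi$ forces $D_v f=\langle\nabla f,v\rangle$ a.e.; intersecting the resulting null sets over a countable dense set of directions $\{v_k\}$ on the sphere produces a full-measure set $A$ on which $D_{v_k}f(x)=\langle\nabla f(x),v_k\rangle$ holds simultaneously for every $k$.

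The main obstacle is the final step: upgrading "directional derivatives exist and are linear in $v$" to genuine differentiability, since pointwise existence of all directional derivatives does not by itself give a uniform first-order expansion. The key is that the remainder $Q(v,t)=\frac{f(x+tv)-f(x)}{t}-\langle\nabla f(x),v\rangle$ is, for fixed $x\in A$, Lipschitz in $v$ with constant $L+|\nabla f(x)|$ uniformly in $t$, because the Lipschitz bound gives $|f(x+tv)-f(x+tv')|\le Lt|v-v'|$. Given $\varepsilon>0$, compactness of the unit sphere lets me cover it by finitely many balls centered at directions $v_{k_1},\dots,v_{k_N}$ of radius comparable to $\varepsilon$; choosing $t$ small so that each $Q(v_{k_j},t)$ is small (possible since $x\in A$) and using the uniform Lipschitz estimate to pass from $v$ to the nearest center, I obtain $|Q(v,t)|<\varepsilon$ for all unit $v$ and all small $t$. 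Writing $y-x=tv$ with $t=|y-x|$, this is precisely $\frac{|f(y)-f(x)-\langle\nabla f(x),y-x\rangle|}{|y-x|}\to 0$, so $f$ is differentiable at every point of the full-measure set $A$.
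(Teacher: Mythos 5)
Your argument is correct, and it is necessarily a different route from the paper's, because the paper offers no proof of this statement at all: Theorem~\ref{Theorem:Rademacher} is quoted from Rademacher's 1919 paper \cite{Bib:Rademacher} purely as background. What you have written is the classical Euclidean proof (essentially the one in Federer or Evans--Gariepy): Lebesgue's one-dimensional theorem along lines plus Fubini for a.e.\ existence of $D_vf$, the distributional identity $\int D_vf\,\varphi=-\int f\,D_v\varphi$ with dominated convergence to force $D_vf=\langle\nabla f,v\rangle$ a.e., and the equi-Lipschitz-in-$v$ bound on $Q(v,t)$ together with compactness of the sphere to pass from countably many directions to full differentiability --- the last step being exactly the point where pointwise directional differentiability alone would fail. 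It is worth contrasting this with how the paper handles its sub-Riemannian analogue, Theorem~\ref{Theorem:RademacherLike}: there the statement is imported from \cite{Bib:Vod-Spaces}, and the paper's own alternative derivation goes through the approximate-differentiability machinery --- split the domain into countably many pieces on which $f$ is Lipschitz (Theorem~\ref{Theorem:SetFamilyLipschitz}), construct approximate derivatives along the horizontal fields and the candidate homomorphism $L_g$, then upgrade the approximate differential to a genuine one at a.e.\ point via Lemma~\ref{Lemma:ApprDiffForLipschitz}. Your key linearity step has no analogue in that setting, since the target is a nonabelian local Carnot group rather than a vector space and one cannot integrate by parts; the paper's substitute is Egorov/Luzin uniformization. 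So your approach buys brevity and self-containedness in the Euclidean case, while the paper's buys applicability to group-valued maps of Carnot--Carath\'eodory spaces.

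Two details you glossed over, neither fatal: Fubini in your first step requires the set where $D_vf$ exists to be measurable, which holds because the upper and lower derivatives are Borel functions (they are $\limsup$ and $\liminf$ over rational $t\to 0$ of quotients continuous in $t$); and in the final step the Lipschitz constant of $v\mapsto Q(v,t)$ is $L+|\nabla f(x)|\le (1+\sqrt{n})L$, so it is in fact uniform over all of $A$, not merely pointwise --- a remark worth making, though your argument only needs the pointwise bound at each fixed $x\in A$.
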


The result permits many enhancements and generalizations. The most
natural is to have an arbitrary measurable set as the domain of
the function together with a weaker assumption on the function.
Such a result is the Stepanoff theorem:
\begin{theorem}[\cite{Bib:StepDiff}] \label{Theorem:Stepanoff}
    If $A \subset \mathbb{R}^n$ is a measurable set and the function
    $f : U  \to \mathbb{R}^m$ satisfies the condition
    \begin{equation} \label{Eq:StepanoffCond}
        \varlimsup_{x \to a} \frac {|f(x)-f(a)|} {|x-a|} < \infty
        \quad \text{at every point} \ a \in A,
    \end{equation}
     then $f$ is differentiable at almost all points of the set $A$.
\end{theorem}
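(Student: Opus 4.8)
The plan is to derive Stepanoff's theorem from Rademacher's theorem (Theorem~\ref{Theorem:Rademacher}) by cutting $A$ into countably many pieces on which $f$ is genuinely Lipschitz. The first observation is that condition \eqref{Eq:StepanoffCond} is nothing but a pointwise Lipschitz bound: for each $a\in A$ there are a natural number $k$ and a radius $\rho>0$ with $|f(x)-f(a)|\le k|x-a|$ whenever $|x-a|<\rho$. Fixing a countable base $\{B_i\}$ of balls with rational centres and radii, I would set
\[
E_{i,k}=\{\,a\in A\cap B_i : |f(x)-f(a)|\le k|x-a| \ \text{for all}\ x\in B_i\,\},
\]
so that the previous remark yields $A=\bigcup_{i,k}E_{i,k}$. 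On each $E_{i,k}$ the restriction $f|_{E_{i,k}}$ is $k$-Lipschitz, since any two of its points lie in the common ball $B_i$.

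Next I would extend each $f|_{E_{i,k}}$ to a globally Lipschitz map $g_{i,k}\colon\mathbb{R}^n\to\mathbb{R}^m$ (applying the McShane formula coordinatewise) and invoke Rademacher's theorem to get that $g_{i,k}$ is differentiable off a Lebesgue-null set. Since a subset of a null set is null, $g_{i,k}$ is differentiable at every point of $E_{i,k}$ outside a set of outer measure zero; by the Lebesgue density theorem almost every point of $E_{i,k}$ is also a density point of $E_{i,k}$. The content of the proof is the claim that $f$ is differentiable, with $Df(a)=Dg_{i,k}(a)$, at every $a\in E_{i,k}$ that is simultaneously a density point of $E_{i,k}$ and a point of differentiability of $g_{i,k}$.

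To prove this claim I would fix such a point $a$, write $L:=Dg_{i,k}(a)$, let $x\to a$, and choose $y\in E_{i,k}$ with $|x-y|$ equal to $\operatorname{dist}(x,E_{i,k})$ up to a factor $2$. Density of $E_{i,k}$ at $a$ forces $\operatorname{dist}(x,E_{i,k})=o(|x-a|)$, because otherwise a ball around $x$ of radius comparable to $|x-a|$ would avoid $E_{i,k}$ and occupy a fixed fraction of $B(a,(1+\varepsilon)|x-a|)$. Splitting
\[
f(x)-g_{i,k}(a)-L(x-a)=[f(x)-f(y)]+[g_{i,k}(y)-g_{i,k}(a)-L(y-a)]+L(y-x),
\]
I would bound the first bracket by $k|x-y|$ (using $y\in E_{i,k}$ and $f(y)=g_{i,k}(y)$), the second by $o(|y-a|)=o(|x-a|)$ (differentiability of $g_{i,k}$ at $a$), and the last by $\|L\|\,|x-y|$. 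Since $|x-y|=o(|x-a|)$, every term is $o(|x-a|)$, which is exactly differentiability of $f$ at $a$.

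Finally, as there are only countably many pairs $(i,k)$ and $A=\bigcup_{i,k}E_{i,k}$, the union of the exceptional sets is null, so $f$ is differentiable almost everywhere on $A$. The main obstacle is precisely the gluing step of the third paragraph: the pointwise Lipschitz bound controls $f(x)$ only relative to the base point $a$ and to an auxiliary point $y\in E_{i,k}$ where $f$ coincides with the smooth model $g_{i,k}$, so it is essential that density lets one replace an arbitrary nearby $x$ by a point of $E_{i,k}$ at distance $o(|x-a|)$. The only other delicate point is that the sets $E_{i,k}$ need not be measurable; I would dispose of this by working throughout with outer measure and passing to $G_\delta$ measurable hulls of the $E_{i,k}$, which affects the density argument by a null amount and leaves the almost-everywhere conclusion intact.
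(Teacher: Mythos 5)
Your proof is correct, and it is worth noting that the paper itself offers no proof of this statement: Theorem~\ref{Theorem:Stepanoff} is quoted from Stepanoff's 1923 paper, and the only argument in the text with a similar flavour is the sketch just below it, where the condition with $\ap\varlimsup$ in place of $\varlimsup$ is handled by Federer's decomposition of $A$ into countably many pieces with Lipschitz restrictions, followed by Theorem~\ref{Theorem:Rademacher} on each piece (the same strategy reappears in the Carnot--Carath\'eodory setting as Theorem~\ref{Theorem:SetFamilyLipschitz} and Theorem~\ref{Theorem:ApprStepanoffLike}). That decomposition-plus-Rademacher route, as the paper uses it, only yields \emph{approximate} differentiability almost everywhere, since differentiability of the restriction $f|_{E_{i,k}}$ at a density point says nothing about $f(x)$ for $x$ off the piece. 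The genuinely extra content of your argument is exactly what bridges this gap to the full differentiability asserted by Stepanoff: you define $E_{i,k}$ asymmetrically, requiring $|f(x)-f(a)|\le k|x-a|$ for \emph{all} $x\in B_i$ rather than only for $x$ in the piece, and at a (outer-)density point $a$ you replace an arbitrary nearby $x$ by $y\in E_{i,k}$ with $|x-y|=o(|x-a|)$, so that the pointwise bound based at $y$, the differentiability of the McShane extension $g_{i,k}$ at $a$, and the linear term $L(y-x)$ each contribute $o(|x-a|)$. This gluing step is precisely the classical mechanism (it is Federer's proof, in essence) that upgrades approximate to genuine differentiability, and your two technical caveats are handled properly: the choice of $y$ within a factor $2$ of $\operatorname{dist}(x,E_{i,k})$ is harmless, and the possible nonmeasurability of the $E_{i,k}$ (note $f$ is not assumed measurable here) is correctly dispatched by working with outer density via measurable hulls, since a ball disjoint from $E_{i,k}$ bounds the outer measure of $E_{i,k}$ in the surrounding ball away from full measure. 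So your proof is complete and, relative to the paper, supplies the one step its own machinery does not contain.
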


{\it The density} of a measurable set $Y \subset \mathbb{R}^n$
at a point $x \in \mathbb{R}^n$ is a limit
$$
    \lim_{r \to +0} \frac{\mathcal{H}^n(Y \cap B(x,r))}{\mathcal{H}^n(B(x,r))},
$$
in case it exists (here $\mathcal{H}^n$ is $n$-dimensional Hausdorff measure).

It is known that almost all points of a measurable set $Y$ are
the density points (i.~e. the density of the set is $1$ at those points)
and almost all points of the set $\mathbb{R}^n \setminus Y$ are the points
of the density $0$.

A value $y \in \mathbb{R}^m$ is called {\it the approximate limit}
of a function $f : E \subset \mathbb{R}^n \to \mathbb{R}^m$
at a density point $x_0 \in E$
$\bigl($denoted by $y = \ap \lim\limits_{x \to x_0} f(x)\bigr)$ if
the set $E \setminus f^{-1}(W)$ have the density $0$ at the point
$x_0$ for every neighborhood $W \subset \mathbb{R}^m$ of the point~$y$.
The approximate limit is unique \cite{Bib:Federer}.

The idea of the approximate limit is tightly connected with the
fundamental notion of the geometric measure theory: the notion
of measurability. Precisely, for a mapping of the Euclidean spaces
to be measurable, it is necessary and sufficient to be approximately
continuous almost everywhere (see, for instance, \cite{Bib:Federer}).

If we consider the convergence of the relation $\dfrac{f(x+tv)-f(x)}{t}$
to the value $L(v)$ of a linear mapping $L : \mathbb{R}^n \to \mathbb{R}^m$ in
different topologies of the unit ball $B(0,1) \subset \mathbb{R}^n$ then
we proceed to different notions of differentiability. The convergence to $L$
in the uniform topology $C(B(0,1))$ gives us the classical differentiability.
The convergence to $L$ by measure gives just the notion of approximate
differentiability of the Euclidean spaces, see for instance \cite{Bib:Reshetnyak}.

With the approximate differential introduced by Stepanoff, the following
result was obtained in his work:
\begin{theorem}[\cite{Bib:StepAppr}]
	The function is approximately differentiable almost everywhere if and
	only if it has approximate derivatives with respect to each variable
	almost everywhere.
\end{theorem}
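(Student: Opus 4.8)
The plan is to prove both implications, treating the direction ``approximately differentiable $\Rightarrow$ approximate partials exist'' as the routine one and concentrating on the converse, which carries the real content. Throughout I may assume $m=1$, since a mapping is approximately differentiable exactly when each of its coordinate functions is, and the same componentwise reduction applies to the approximate partial derivatives.

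\emph{Necessity.} Suppose $f$ is approximately differentiable at a point $x_0$ with approximate differential $L$; the natural candidate for $\ap\frac{\partial f}{\partial x_i}(x_0)$ is $L(e_i)$. Pointwise this is not immediate, because a set of $n$-dimensional density $0$ at $x_0$ may meet the coordinate line through $x_0$ in a set of positive linear density. Since I need the conclusion only almost everywhere, I would argue by Fubini-type slicing: letting $G\subset A$ be the full-measure set of approximate-differentiability points and considering the exceptional sets $\{x:|f(x)-f(x_0)-L(x-x_0)|>\varepsilon|x-x_0|\}$, I would show that for almost every $x_0\in G$ these planar density-$0$ sets restrict to linear density-$0$ subsets of each coordinate line, which delivers the approximate partial derivatives at almost every $x_0$.

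\emph{Sufficiency, reduction to a Lipschitz decomposition.} The core claim is: if $f$ has approximate partial derivatives at almost every point of a measurable set $A$, then $A=N\cup\bigcup_k A_k$ with $\mathcal{H}^n(N)=0$ and $f|_{A_k}$ Lipschitz. Granting this, I would finish as follows. Each $f|_{A_k}$ extends to a Lipschitz function $g_k$ on all of $\mathbb{R}^n$; by Theorem~\ref{Theorem:Rademacher} the map $g_k$ is differentiable at almost every point, in particular at almost every density point $x_0$ of $A_k$. At such an $x_0$, with $L=Dg_k(x_0)$, the set $\{x:|f(x)-f(x_0)-L(x-x_0)|>\varepsilon|x-x_0|\}$ is contained in $(\mathbb{R}^n\setminus A_k)\cup\{x\in A_k:|g_k(x)-g_k(x_0)-L(x-x_0)|>\varepsilon|x-x_0|\}$, where the first set has density $0$ because $x_0$ is a density point of $A_k$ and the second is empty near $x_0$ by differentiability of $g_k$. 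Hence $f$ is approximately differentiable at $x_0$, and since almost every point of $A_k$ is a density point and $\bigcup_k A_k$ exhausts $A$, approximate differentiability holds almost everywhere.

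\emph{The main obstacle.} Proving the Lipschitz decomposition is where the difficulty lies. I would slice along coordinate directions: existence of $\ap\frac{\partial f}{\partial x_i}$ almost everywhere forces, by Fubini, that for almost every line parallel to $e_i$ the one-dimensional restriction of $f$ has an approximate derivative almost everywhere, and the one-dimensional theory decomposes each such line into pieces of controlled slope. The hard part is to recombine the $n$ one-dimensional controls into a genuine multidimensional Lipschitz bound. After using Lusin's theorem to pass to large closed sets on which $f$ and the measurable functions $\ap\frac{\partial f}{\partial x_i}$ are continuous, I would estimate $f(x)-f(x_0)$ by telescoping along a staircase path $x_0=p_0,p_1,\dots,p_n=x$ with $p_j-p_{j-1}$ parallel to $e_j$, bounding each increment through the corresponding approximate partial derivative. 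Because the partials control the difference quotients only on sets of full density along each line, a Fubini argument over the intermediate corner points is needed to guarantee that for a density-$1$ set of endpoints $x$ the entire staircase stays in the good region. This passage from one-dimensional density-$1$ control to a uniform multidimensional Lipschitz estimate on a set of positive measure is the crux; once it is in place, Theorem~\ref{Theorem:Stepanoff} and Theorem~\ref{Theorem:Rademacher} supply the differentiability needed to close the argument.
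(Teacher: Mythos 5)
Your overall architecture is the classical one, and it runs parallel to how the paper proves its Carnot--Carath\'eodory generalization (the paper never proves this introductory statement itself --- it is quoted from Stepanoff --- so the natural comparison is with the proof of Theorem~\ref{Theorem:WorkResultDouble}, which goes $1)\Rightarrow 3)\Rightarrow 2)$ and $2)\Rightarrow 1)$). But there is a concrete flaw in your necessity direction. You propose to show directly, by ``Fubini-type slicing,'' that for a.e.\ $x_0$ the exceptional sets $E_\varepsilon(x_0)=\{x:|f(x)-f(x_0)-L(x_0)(x-x_0)|>\varepsilon|x-x_0|\}$ have linear density $0$ along each coordinate line through $x_0$. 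This cannot work as stated: these sets vary with $x_0$, so Fubini applied to any single set tells you nothing, and $n$-dimensional density $0$ of a set \emph{at the center point} $x_0$ imposes no constraint whatsoever on its one-dimensional slice through $x_0$ --- exactly the phenomenon you yourself flag. There is no known direct slicing proof; the a.e.\ gain has to come from a decomposition. The correct route, and the one the paper's generalized proof takes, is: approximate differentiability at $x_0$ gives $\ap\varlimsup_{x\to x_0}|f(x)-f(x_0)|/|x-x_0|<\infty$ by the triangle inequality with the bounded linear map, hence the condition \eqref{Eq:ApprCond} holds a.e., hence the Federer decomposition theorem quoted in the introduction (in the paper's setting, Theorem~\ref{Theorem:SetFamilyLipschitz}) yields Lipschitz pieces; then your own extension-plus-Rademacher argument, combined with the standard Fubini fact that a.e.\ point of a measurable set is a point of linear density $1$ along each coordinate direction, delivers the approximate partials (this is the paper's $3)\Rightarrow 2)$, Corollary~\ref{Corollary:LipschitzDerives}).

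Second, the step you correctly identify as the crux --- recombining the one-dimensional density controls into a Lipschitz bound via staircase paths --- is only named, not executed, and it is precisely where all the work lies. The paper's Lemma~\ref{Lemma:DerivativesAlongCurves} (and Lemma~\ref{Lemma:CoordinateDifferentiability}) shows what execution requires: measurability of the auxiliary sets via Property~\ref{Property1}; Luzin and Egorov to make the convergence of difference quotients and the continuity of the derivative maps uniform on a large set $F$; \emph{two} families of bad-parameter sets for each step of the staircase, one ($T^m_j$) where the difference quotient at an intermediate corner deviates, and one ($Z_j$) where the corner itself leaves $F$; the exhaustions $B^m_j(p)$, $C^m_j(q)$ with carefully chosen sequences $p_m,q_m$; a measure estimate for the union $S^m$ of bad parameters; and an inductive propagation of errors through moduli of continuity $\omega_j$ of the approximate partials, giving $C_j(t)=\varphi_j(t)+t\,\omega_j(C_{j-1}(t))$ with $C_j(t)/t\to 0$ uniformly. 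Your sketch gestures at the Luzin step and the ``Fubini over corner points,'' but omits the double bookkeeping (deviation sets \emph{and} escape sets) and the uniformity apparatus without which the induction along the staircase does not close. Since you conditioned your whole sufficiency argument on this lemma (``once it is in place''), the proposal is incomplete at its center, even though the surrounding reductions ($3)\Rightarrow 1)$ by Lipschitz extension and Theorem~\ref{Theorem:Rademacher}, which is simpler than the paper's extension-free argument and is legitimately Euclidean) are correct.
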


It worth noting that if a mapping has a classical differential
then it has an approximate one and these differentials coincide.
Therefore, the approximate differential generalizes the concept of
the classical differentiability.

With use of the approximate differential Theorem~\ref{Theorem:Stepanoff}
can be further extended in the following direction.
For doing this we apply a result of \cite{Bib:Federer}:
\begin{theorem}
    If $A \subset \mathbb{R}^n$, $f : A \to \mathbb{R}^m$ and
    \begin{equation} \label{Eq:ApprCond}
        \ap \varlimsup_{x \to a} \frac {|f(x) - f(a)|} {|x-a|} < \infty
        \quad \text{for every point}\ a \in A,
    \end{equation}
    then $A$ is a union of the disjoint sequence of the measurable sets
    $A_i$ and a set of measure zero such that every restriction
    $f|_{A_i}$ is a Lipschitz mapping.
\end{theorem}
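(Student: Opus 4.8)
The plan is to exploit that hypothesis \eqref{Eq:ApprCond} furnishes, at each point $a \in A$, a finite slope $\lambda_a$ together with a ``good'' set of density $1$ on which $f$ obeys a linear bound, and then to glue these pointwise bounds into genuinely Lipschitz restrictions by a countable exhaustion. First I would fix a small threshold $\varepsilon_0 = \varepsilon_0(n)$ to be chosen later and, for each pair of positive integers $(j,k)$, set
$$E_{j,k} = \Bigl\{ a \in A : \mathcal{H}^n\bigl(\{x \in B(a,r) : |f(x)-f(a)| > j|x-a|\}\bigr) \le \varepsilon_0\, \mathcal{H}^n(B(a,r)) \ \text{for all}\ 0<r<1/k \Bigr\}.$$
Since \eqref{Eq:ApprCond} says exactly that the set $\{x:|f(x)-f(a)|>j|x-a|\}$ has density $0$ at $a$ once $j>\lambda_a$, every $a\in A$ lies in some $E_{j,k}$; hence $\bigcup_{j,k}E_{j,k}=A$, and the family is countable.

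The heart of the argument---and the step I expect to be the main obstacle---is the local Lipschitz estimate on each $E_{j,k}$. I would take $a,b\in E_{j,k}$ with $r:=|a-b|$ so small that $3r<1/k$, and work inside the ball $B(a,2r)$. The defining inequality bounds the ``bad'' set of $a$ there by $\varepsilon_0\,\mathcal{H}^n(B(a,2r))$; since $B(a,2r)\subset B(b,3r)$, the bad set of $b$ inside $B(a,2r)$ is bounded by $\varepsilon_0(3/2)^n\mathcal{H}^n(B(a,2r))$. Choosing $\varepsilon_0<\bigl(1+(3/2)^n\bigr)^{-1}$ forces the two good sets to overlap, so there is a point $x\in B(a,2r)$ simultaneously satisfying $|f(x)-f(a)|\le j|x-a|$ and $|f(x)-f(b)|\le j|x-b|$. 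The triangle inequality with $|x-a|\le 2r$ and $|x-b|\le 3r$ then yields $|f(a)-f(b)|\le 5j|a-b|$. The delicate points are the correct bookkeeping of the radii and the measure-theoretic overlap, which I would phrase with outer measure so that no measurability of $f$ is needed at this stage.

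Consequently $f$ is $5j$-Lipschitz on $E_{j,k}\cap Q$ for every cube $Q$ of diameter less than $1/(3k)$. Partitioning $\mathbb{R}^n$ into countably many such cubes and intersecting with the $E_{j,k}$ produces a countable family of sets covering $A$ on each of which $f$ is Lipschitz. Finally I would upgrade these to disjoint \emph{measurable} pieces: extend each Lipschitz restriction to a global Lipschitz map $g_m:\mathbb{R}^n\to\mathbb{R}^m$ (McShane's extension, applied coordinatewise), observe that $f$ agrees with the continuous $g_m$ on the corresponding piece, and replace that piece by the measurable set $\{x\in A: f(x)=g_m(x)\}$, on which $f=g_m$ is manifestly Lipschitz. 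Re-indexing and subtracting earlier members makes the sequence disjoint, and their union exhausts $A$ up to the asserted null set, the exceptional set absorbing the only genuine subtlety---the measurability of these equality sets, for which the ambient measurability of $f$ (the setting of the paper) is used, together with the earlier fact that almost every point of a measurable set is a density point.
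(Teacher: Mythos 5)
There is a genuine gap in your key overlap step. By the very meaning of hypothesis \eqref{Eq:ApprCond}, the ``bad'' set it controls is a \emph{subset of $A$}: the difference quotient is only defined for $x \in A$, so what has density zero at $a$ is $\{x \in A : |f(x)-f(a)| > j|x-a|\}$. Consequently the point $x \in B(a,2r)$ that your measure count produces outside both bad sets may simply lie outside $A$, where neither inequality $|f(x)-f(a)| \le j|x-a|$ nor $|f(x)-f(b)| \le j|x-b|$ makes sense, and the triangle-inequality step collapses. The failure is not hypothetical: let $A$ have measure zero (a segment in $\mathbb{R}^2$, or the Cantor set in $\mathbb{R}$) and let $f : A \to \mathbb{R}^m$ be wildly non-Lipschitz. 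Every superlevel set of the quotient is null, hence has density $0$ at each $a \in A$, so \eqref{Eq:ApprCond} holds vacuously and every $a$ lies in every $E_{j,k}$; your lemma would then make $f$ $5j$-Lipschitz on pieces covering \emph{all} of $A$, which is false. The theorem itself survives only because its conclusion allows a null exceptional set --- and your argument, as written, never produces or uses one.

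The missing idea is exactly the one built into the paper's proof of its Carnot--Carath\'eodory analogue (Theorem~\ref{Theorem:SetFamilyLipschitz}, which is the proof scheme the paper supplies; the Euclidean statement itself is quoted from Federer): points outside the domain are declared bad, and one works only at density points. There the bad set is $Q^m_k(u,r) = B_{cc}(u,r) \cap \{x : x \notin E^m \text{~or~} \widetilde d_{cc}(f(x),f(u)) > k\, d_{cc}(x,u)\}$, where $E^m$ is the set of density points of the relevant portion of the domain; the non-density points form the null set of the conclusion, and the common good point found by the measure count automatically lies in $E^m$, so both estimates on $f$ apply. Transposed to your setting: add to the definition of $E_{j,k}$ the requirement that the outer measure of $A \cap B(a,r)$ be at least $(1-\varepsilon_0)\,\mathcal{H}^n(B(a,r))$ for all $0<r<1/k$ --- almost every $a \in A$ satisfies this (pass to a measurable hull; no measurability of $A$ or $f$ is needed here), and the exceptional null set of the statement is precisely what you discard --- then tighten $\varepsilon_0$ so that $\varepsilon_0\bigl(2+(3/2)^n\bigr)<1$ and run the overlap count inside $A \cap B(a,2r)$: subadditivity of outer measure yields a good point \emph{in $A$}, and your bound $|f(a)-f(b)| \le 5j|a-b|$ follows. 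With this repair the rest of your plan is sound and parallels the paper's: your cubes of diameter below $1/(3k)$ correspond to the paper's splitting of $B^m_k$ into pieces of diameter less than $\min\{k^{-1},m^{-1}\}$, your explicit $(3/2)^n$ bookkeeping is the Euclidean stand-in for the paper's constant $\gamma_m$ (which in the sub-Riemannian case requires the Ball--Box theorem), and the McShane-extension trick for manufacturing measurable disjoint pieces is a legitimate alternative to the paper's direct measurability argument via its Property~\ref{Property1}.
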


Hence, for a function $f$ meeting the condition \eqref{Eq:ApprCond},
by Theorem~\ref{Theorem:Rademacher}, we have every restriction $f|_{A_i}$
being differentiable almost everywhere in $A_i$. The density points
for the set $A_i$ also are the density points for the set $A$.
Therefore, one can conclude that the mapping $f$ is approximately
differentiable almost everywhere in $A$.

The condition \eqref{Eq:ApprCond} is the weakest because it obviously
holds for the approximately differentiable function.

The final representation of the theorem is how it was stated by Whitney
\begin{theorem}[\cite{Bib:Whitney}] \label{Theorem:Whitney}
    Let the set $P \subset \mathbb{R}^n$ be measurable and bounded,
    $f : P \to \mathbb{R}^m$ be a measurable function.
    The following conditions are equivalent:

    $1)$ the mapping $f$ is approximately differentiable almost everywhere in $P$;

    $2)$ the mapping $f$ has approximate derivatives with respect to
        each variable almost everywhere in $P$;

    $3)$ there is a countable family of the disjoint sets
        $Q_1,Q_2,\dots$ such that
        $
            \vert P \setminus \bigcup\limits_{i=1}^{\infty} Q_i \vert = 0
        $
        and every restriction $f|_{Q_i}$ is a Lipschitz mapping;

    $4)$ for every $\varepsilon > 0$, there are a closed set $Q \subset P$
        such that $|P \setminus Q| < \varepsilon$ and a $C^1$-smooth mapping
        $g : P \to \mathbb{R}^m$ such that $g = f$ in $Q$.
\end{theorem}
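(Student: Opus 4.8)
The plan is to prove the four conditions equivalent by closing a cycle of implications through condition $(1)$, used as a hub, since the quoted theorems of Stepanoff and Federer already connect $(1)$, $(2)$ and $(3)$ almost directly and only condition $(4)$ requires genuinely new work. First, $(1)\Rightarrow(2)$ is immediate: evaluating the approximate differential on the coordinate directions produces the approximate partial derivatives. The reverse implication $(2)\Rightarrow(1)$ is precisely the Stepanoff theorem \cite{Bib:StepAppr} cited above, so $(1)$ and $(2)$ are equivalent with no further argument.

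Next I would link $(1)$ and $(3)$. For $(1)\Rightarrow(3)$, note that at every point of approximate differentiability the approximate upper limit of the difference quotient is finite, so the condition \eqref{Eq:ApprCond} holds on a set $A\subseteq P$ of full measure; applying the Federer decomposition theorem to $A$ yields the countable disjoint Lipschitz partition, and absorbing the null set $P\setminus A$ gives $(3)$. For $(3)\Rightarrow(1)$, Rademacher's Theorem~\ref{Theorem:Rademacher} makes each restriction $f|_{Q_i}$ differentiable at almost every point of $Q_i$; at a density point $a$ of $Q_i$ the complement $P\setminus Q_i$ has density $0$, so convergence in measure is unaffected by it and the classical differential of $f|_{Q_i}$ serves as the approximate differential of $f$ at $a$. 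Since the $Q_i$ exhaust $P$ up to measure zero and almost every point of each $Q_i$ is a density point (hence a density point of $P$ as well), $f$ is approximately differentiable almost everywhere in $P$.

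The substantive addition is condition $(4)$. The implication $(4)\Rightarrow(1)$ is easy: a $C^1$ map $g$ is everywhere differentiable, so at each density point of the closed set $Q$ on which $g=f$ the ordinary differential $Dg$ is an approximate differential of $f$; letting $\varepsilon=1/k\to0$ and taking the union of the associated sets $Q$ covers $P$ up to a null set. The difficult direction is $(1)\Rightarrow(4)$, where the main work lies. Here I would invoke the classical Whitney $C^1$-extension theorem, whose hypothesis is a Taylor-type remainder estimate $|f(x)-f(a)-Df(a)(x-a)|=o(|x-a|)$ holding \emph{uniformly} in the base point $a$ over a closed set. Starting from the measurable approximate differential $x\mapsto Df(x)$, I would first use Lusin's theorem to pass to a closed set on which $f$ and $Df$ are continuous, and then upgrade the pointwise density-based estimate to a uniform one by an Egorov-type stratification: for integers $j,k$ let $E_{j,k}$ consist of the points $a$ at which, for all $r<1/k$, the exceptional set $\{x\in B(a,r):|f(x)-f(a)-Df(a)(x-a)|>|x-a|/j\}$ occupies less than a fixed small fraction of $B(a,r)$. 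These sets are measurable and, for fixed $j$, exhaust the approximate-differentiability points as $k\to\infty$; on each $E_{j,k}$ a covering and mean-value argument, combined with the continuity of $f$ and $Df$ on the Lusin set, converts the density bound into the required uniform remainder estimate. Applying the Whitney extension theorem on a closed subset of sufficiently large measure then yields the $C^1$ map $g$ with $g=f$ there, and controlling the measure of the discarded set below $\varepsilon$ gives $(4)$.

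I expect the extraction of the uniform Whitney estimate from merely approximate (in-measure) differentiability to be the crux of the argument: one must trade the density-zero control of the exceptional set, valid for each fixed base point, for a genuine pointwise bound holding simultaneously at all nearby base points, which is exactly what forces the passage to the closed sets $E_{j,k}$ and the careful interplay between Lebesgue density, continuity on the Lusin set, and the geometry of the balls $B(a,r)$.
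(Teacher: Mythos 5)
Your proposal is essentially the classical route, and it is consistent with the paper, which in fact gives no proof of this theorem: it cites Whitney, and the surrounding text only sketches the $(1)\Leftrightarrow(3)$ circle exactly as you do, via the quoted Federer decomposition theorem for condition \eqref{Eq:ApprCond} together with Theorem~\ref{Theorem:Rademacher}. However, there is one genuinely false step. Your claim that $(1)\Rightarrow(2)$ is ``immediate: evaluating the approximate differential on the coordinate directions produces the approximate partial derivatives'' fails at the pointwise level. Approximate differentiability at $a$ controls the difference quotient only outside a set of $n$-dimensional density zero at $a$, and for $n\geq 2$ such a set may contain the entire line $a+\mathbb{R}e_i$, since a line is Lebesgue-null; hence it imposes no constraint whatsoever on the one-dimensional approximate limit along that line. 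Concretely, take $f:\mathbb{R}^2\to\mathbb{R}$ vanishing off the $x_1$-axis with $f(t,0)=\sqrt{|t|}$: at $a=0$ the map $f$ is approximately differentiable with differential $0$ (the exceptional set lies in the null axis), yet the difference quotient $f(te_1)/t=\sign(t)|t|^{-1/2}$ blows up for \emph{every} $t$, so no approximate partial derivative along $x_1$ exists there. The almost-everywhere implication is true, but it needs an argument you have not supplied: either invoke the quoted Stepanoff theorem \cite{Bib:StepAppr} in full --- note it is stated as an ``if and only if,'' so it already gives both directions and your direct argument is superfluous --- or route through $(3)$: extend each $f|_{Q_i}$ to a Lipschitz map on $\mathbb{R}^n$ (McShane, coordinatewise), apply Rademacher, and use Fubini to see that almost every point of $Q_i$ is a point of one-dimensional density $1$ of $Q_i$ along each coordinate line through it; at such points the partial derivative of the extension is the approximate partial derivative of $f$.

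The remaining steps are sound. In $(3)\Rightarrow(1)$ the same McShane-extension remark is needed before applying Theorem~\ref{Theorem:Rademacher} to $f|_{Q_i}$, but the density-point argument is then correct; $(4)\Rightarrow(1)$ is fine; and your Lusin--Egorov stratification feeding the Whitney $C^1$-extension theorem for $(1)\Rightarrow(4)$ is the standard Federer argument (boundedness of $P$ keeps the closed sets compact). One caution there: your ``covering and mean-value argument'' must in substance be the two-base-point comparison --- for nearby $a,b$ in the good set, choose a comparison point in $B(a,r)\cap B(b,r)$ lying outside both exceptional sets, whose existence the density bounds in $E_{j,k}$ guarantee; this is the same device used in the paper's proof of Theorem~\ref{Theorem:SetFamilyLipschitz} --- since nothing resembling a mean-value theorem is available for a merely measurable $f$. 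With that spelled out, the uniform remainder estimate required by the Whitney extension theorem follows and the cycle closes.
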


An appropriate concept of differentiability for mappings of Carnot
groups was introduced by P. Pansu in \cite{Bib:Pansu}. Now it is
called the $\mathcal{P}$-differentiability. It was introduced for
some results of the theory of quasiconformal mappings to establish
\cite{Bib:Pansu, Bib:KR}. Some classes of
$\mathcal{P}$-differentiable mappings of Carnot groups were described
in \cite{Bib:Vod-Ukhlov, Bib:Vod-Groups, Bib:Magnani} with a purpose
to obtain some formulas of geometric measure theory and some crucial
results of quasiconformal analysis
\cite{Bib:Vod96, Bib:Vod-Ukhlov2, Bib:Vod99, Bib:Vod07-1,
Bib:Vod07-2, Bib:Pauls}.

Later, in \cite{Bib:Vod-Spaces, Bib:Karm-Vod} concept of
$\mathcal{P}$-differentiability was extended for mappings of
Carnot--Carath\'eodory spaces for proving Rademacher and Stepanoff
type theorems.

In this work we obtain a partial generalization of Theorem~\ref{Theorem:Whitney}
to mappings of Carnot--Carath\'eodory spaces.
\begin{theorem} \label{Theorem:WorkResult}
	Let $\mathcal{M}$, $\widetilde{\mathcal{M}}$
	be Carnot--Carath\'eodory spaces,
	$E \subset \mathcal{M}$ be a measurable subset
	of $\mathcal{M}$ and
    $f : E \to \widetilde{\mathcal{M}}$ be a measurable mapping.
    The following conditions are equivalent:

    $1)$ the mapping $f$ is approximately differentiable
        almost everywhere in $E$;

    $2)$ the mapping $f$ has approximate derivatives along the basic
        horizontal vector fields almost everywhere in $E$;

    $3)$ there is a sequence of the disjoint sets
        $Q_1, Q_2, \dots$ such that
        $\bigl|E \setminus \bigcup\limits_{i=1}^{\infty} Q_i\bigr| = 0$ and
        every restriction $f|_{Q_i}$ is a Lipschitz mapping.
\end{theorem}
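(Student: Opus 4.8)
The plan is to prove the cycle of implications $1)\Rightarrow 2)\Rightarrow 3)\Rightarrow 1)$, mirroring the structure of the Euclidean Whitney theorem but replacing linear-algebraic differentials with the graded homomorphism structure appropriate to Carnot--Carath\'eodory spaces. The implication $1)\Rightarrow 2)$ should be the easy direction: if the approximate differential $D_H f$ exists at a point $x$, then its action on each basic horizontal vector field $X_i$ recovers the approximate derivative along $X_i$, so differentiability a.e. immediately yields the existence of all horizontal partials a.e. The only care needed is to verify that the approximate-limit bookkeeping is compatible --- that the exceptional null sets for the separate derivatives can be absorbed into the single null set coming from differentiability --- which is routine.

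\smallskip

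The substantive work lies in $2)\Rightarrow 3)$. First I would reduce to a local statement on sets of finite measure and, using a measurable selection / Lusin-type argument, pass from the mere existence of the horizontal approximate derivatives to a quantitative bound: on a set of positive measure the approximate difference quotients along each $X_i$ are uniformly bounded. The plan is then to invoke the approximate-Lipschitz decomposition result quoted from Federer (the theorem following \eqref{Eq:ApprCond}) in the Carnot--Carath\'eodory setting --- but this is exactly where the geometry intervenes. In the Euclidean case bounded partials control the full difference quotient because one moves between two points along coordinate line segments; here one must instead connect nearby points by horizontal curves, and the length of the connecting curve in the CC-metric must be comparable to the CC-distance. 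This is precisely the role of the \emph{$C^1$-smooth Rashevsky--Chow theorem} advertised in the abstract: it guarantees that any two sufficiently close points are joined by a horizontal curve whose length is controlled by $d_{cc}$, so that bounds on the horizontal derivatives propagate to a genuine Lipschitz bound with respect to $d_{cc}$. Integrating the bounded horizontal derivatives along such a connecting curve gives the Lipschitz estimate on each piece $Q_i$, and decomposing $E$ into countably many such pieces (plus a null set) yields $3)$.

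\smallskip

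For $3)\Rightarrow 1)$ I would argue as in the second half of the Euclidean proof sketched in the excerpt: on each Lipschitz piece $Q_i$ apply the Rademacher-type theorem for CC-spaces (the $\mathcal{P}$-differentiability results of \cite{Bib:Vod-Spaces, Bib:Karm-Vod}) to obtain $\mathcal{P}$-differentiability of $f|_{Q_i}$ almost everywhere in $Q_i$. The key measure-theoretic observation, valid because the CC-measure is locally comparable to a doubling Hausdorff measure, is that almost every point of $Q_i$ is a density point of $Q_i$ relative to $E$; at such a point the differential of the restriction serves as the approximate differential of $f$ on $E$, since the complement $E\setminus Q_i$ contributes with density zero to the approximate-limit condition. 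Summing over $i$ and discarding the null set covers almost all of $E$, giving $1)$.

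\smallskip

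The main obstacle I expect is the geometric propagation step inside $2)\Rightarrow 3)$: one must ensure that the horizontal connecting curves produced by the $C^1$ Rashevsky--Chow theorem have length \emph{uniformly} comparable to $d_{cc}$ over a set of points of almost full measure, and that the integration of the approximate (rather than classical) horizontal derivatives along these curves is legitimate. Controlling the interaction between the approximate-limit exceptional sets and the one-dimensional null sets encountered along the connecting curves --- a difficulty absent in the smooth Euclidean case and delicate because horizontal curves need not align with any fixed foliation --- is where the proof will require the most care.
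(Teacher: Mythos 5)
Your implications $1)\Rightarrow 2)$ and $3)\Rightarrow 1)$ are sound and essentially match the paper (which runs the cycle the other way, proving $3)\Rightarrow 2)$ via Corollary~\ref{Corollary:LipschitzDerives} and $1)\Rightarrow 3)$ via the $\ap\varlimsup$ bound; your $3)\Rightarrow 1)$ is exactly the argument of Theorem~\ref{Theorem:ApprStepanoffLike}). The genuine gap is in your central step $2)\Rightarrow 3)$, specifically in the mechanism ``bounded approximate horizontal derivatives $+$ Rashevsky--Chow connecting curves $+$ integration along those curves $\Rightarrow$ Lipschitz bound.'' This fails for a merely measurable $f$ on a measurable set $E$: an approximate derivative along $X_j$ at $g$ controls $f$ only on a set of parameters $t$ of density $1$ at $0$ on the integral line through $g$, and by Fubini this gives information along \emph{almost every} integral line --- never along the \emph{specific} horizontal curve joining two prescribed points $u,v$ that Theorem~\ref{Theorem:CoordSys} produces. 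That curve is a null set in $\mathcal{M}$; $f$ restricted to it need not be absolutely continuous, need not be approximately continuous, and may even fail to be defined outside a null set of parameters, so there is no fundamental theorem of calculus to invoke. No strengthening of the uniformity you flag in your last paragraph repairs this: the obstruction is not uniformity of the curve lengths but the absence of any pointwise control of $f$ along a fixed one-dimensional set. Note also that the Federer-type decomposition you want to invoke (Theorem~\ref{Theorem:SetFamilyLipschitz}) has as hypothesis the finiteness of the \emph{full} approximate upper limit \eqref{Eq:TotalBound} in all directions; deriving that from the $\dim H_1$ directional bounds is precisely the hard content, not a routine reduction.

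The paper's resolution is structurally different and is worth internalizing: it proves $2)\Rightarrow 1)$ directly (Theorem~\ref{Theorem:Main}) and only then gets $3)$ from $1)$. Rashevsky--Chow enters not as a source of curves to integrate along, but through the special coordinates of the second kind of Theorem~\ref{Theorem:CoordSys}: each nonhorizontal coordinate direction is realized as a fixed finite composition $\Gamma_k$ of horizontal exponentials. One first propagates the approximate derivatives from the fields $X_j$ to the composite curves $\Gamma_k$ (Lemma~\ref{Lemma:DerivativesAlongCurves}), handling the exceptional parameter sets by Luzin/Egorov uniformization together with the measurable families $T^m_j$, $Z_j$, $S^m$ --- this is where the interaction between approximate-limit null sets and one-dimensional null sets is actually controlled, by perturbing to nearby good points rather than by integrating. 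One then defines the candidate differential as the group product $L_g(\hat v) = \prod_k \tilde\delta^{f(g)}_{t_k} \ap d_{sub}(f\circ\Gamma_k)(g)$ of \eqref{Eq:LMapping}, proves it is the approximate differential a.e.\ (Lemma~\ref{Lemma:CoordinateDifferentiability}), and --- a step entirely absent from your outline --- verifies that $L_g$ is a homomorphism of the local Carnot groups, i.e.\ $L_g(\hat u \cdot \hat v) = L_g(\hat u)\cdot L_g(\hat v)$, which occupies the second and third steps of the proof of Theorem~\ref{Theorem:Main}. Only after approximate differentiability is established does the bound \eqref{TopCondition} make Theorem~\ref{Theorem:SetFamilyLipschitz} applicable, yielding the Lipschitz pieces of condition $3)$.
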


A proof of Theorem~\ref{Theorem:WorkResult} is a significant
modification of the arguments of the work \cite{Bib:Vod-Groups}
where the similar result was proved for mappings of Carnot groups.
In the proof we essentially use metric properties of the initial
and nilpotentized vector fields discovered in
\cite{Bib:Karm-Vod, Bib:Karm, Bib:KarmG, Bib:Greshnov}.

% === Section 1 ===
%\newpage
\section{Geometry of Carnot--Carath\'eodory spaces}

We split our work in four sections.
In the first one we give the basic notions and structures concerning
Carnot--Carath\'eodory spaces. In Subsections~\ref{Section:Coord1}
and \ref{Section:Coord2} we have a look at different ways of
specifying a metric and coordinate system in the Carnot--Carath\'eodory
spaces. In Subsection~\ref{Section:PhiCoord} we build a special coordinate
system of the second kind based on the compositions of the integral
lines of the horizontal vector fields. As the consequence of this result
we obtain Chow--Rashevsky theorem for $C^1$-smooth vector fields. We
formulate also local approximation theorem for Carnot--Carath\'eodory
metric.

In Section~\ref{Section2} we introduce definitions of measure,
approximate limit, differentiability and approximate
differentiability and formulate necessary results obtained earlier.

The third section is devoted to the proof of the theorem on approximate
differentiability. We state the theorem and show trivial implications.
Then we formulate the key step of the theorem. Main steps of its
proof are carried out in separate lemmas. In this proof we make use of
special coordinate system of the 2nd kind
$(a_1,\dots,a_N) \mapsto \Phi_N(a_N)\circ\dots\circ\Phi_1(a_1)$
constructed in Subsection~\ref{Section:PhiCoord}. First,
in Subsection~\ref{Section:ApproxDerivatives} we show that function having
approximate derivatives along the basic horizontal vector fields 
has approximate derivatives along the vector fields $Y_k(t)$
which generate the coordinate functions $\Phi_k(t) = \exp(Y_k(t))$.
In the next subsection with use of this coordinate system we build a
mapping of local Carnot groups and study its properties. Finally,
in Subsection~\ref{Section:MainTheoremProof} we prove that
this mapping is really the differential of the initial mapping.

As an application of our results, in the last section we prove an area
formula for approximately differentiable mappings.

% == Subsection 1.1 ==
\subsection{Carnot--Carath\'eodory spaces}
\label{Section:CarnotManifold}

Recall the definition of
Carnot--Carath\'eodory space satisfying the condition of
the equiregularity (\cite{Bib:Gromov, Bib:NSW, Bib:Karm-Vod}).
Fix a connected Riemannian $C^\infty$-manifold $\mathcal{M}$ of topological
dimension $N$. The manifold $\mathcal{M}$ is called a
{\it Carnot--Carath\'eodory space} if the tangent bundle $T \mathcal{M}$
has a filtration
$$
	H \mathcal{M} = H_1 \mathcal{M} \subsetneq \dots \subsetneq
	H_i \mathcal{M} \subsetneq \dots \subsetneq
	H_M \mathcal{M}	= T \mathcal{M}
$$
by subbundles such that every point $g \in \mathcal{M}$ has a neighborhood
$U(g) \subset \mathcal{M}$ equipped with a collection of $C^{1}$-smooth
vector fields $X_1, \dots, X_N$, constituting a basis of $T_v \mathcal{M}$
in every point 	$v \in U(g)$ and meeting the following two properties.
For every $v \in U(g)$,

$(1)$ $H_i \mathcal{M} (v) = H_i (v) = \spn \{ X_1(v), \dots, X_{\dim H_i}(v)\}$
is a subspace of $T_v \mathcal{M}$ of a constant dimension $\dim H_i$,
$i = 1, \dots, M$;

$(2)$ $H_{j+1} = \spn \{ H_j, [H_1, H_{j}], [H_2, H_{j-1}], \dots, [H_{k}, H_{j+1-k}] \}$
where $k = \lfloor \frac {j+1}{2} \rfloor$, $j=1,\dots,M-1$.

The subbundle $H \mathcal{M}$ is called {\it horizontal}.

The number $M$ is called the {\it depth} of the manifold $\mathcal{M}$.

The {\it degree} $\deg X_k$ is defined as $\min \{ m \mid X_k \in H_m\}$.

\begin{remark}
The condition $(2)$ implies that we have the following ``commutator table'':
\begin{equation} \label{Eq:CommCoeff}
	[X_i, X_j](v)
	= \sum_{k:\:\deg X_k \leq \deg X_i + \deg X_j} c_{ijk}(v) X_k(v).
\end{equation}
Note, that \eqref{Eq:CommCoeff} is weaker than condition $(2)$ as it just
implies $[H_i, H_j]\subseteq H_{i+j}$.
\end{remark}

% === Subsection 1.2 ===
\subsection{The coordinates of the 1st kind}
\label{Section:Coord1}

In the sequel we denote by
$B_{e}(a,r)$ an open Euclidean ball centered at the point
$a\in\mathbb R^N$ and with a radius $r$. From the theorems
on smooth dependence of solutions of ordinary differential equations
on a parameter it follows~(see e.~g.~\cite{Bib:Arnold}) that the mapping
$$
    \theta_g: (x_1,\dots,x_N)\to
        \exp\Bigl(\sum\limits_{i=1}^{N}x_iX_i\Bigr)(g), \quad \theta_g(0)=
            \theta_g(0,\dots,0)=g,
$$
is a $C^1$-smooth
diffeomorphism of a ball
$B_e(0,\varepsilon_g)$ in $\mathbb{R}^N$, where $\varepsilon_g$ is a positive number
small enough, into the neighborhood $O_g$ of the point $g \in \mathcal{M}$.

The collection of numbers $\{x_i\}$, $i=1,\dots, N$, where
$(x_1,\dots,x_N)=\theta_g^{-1}u\in B_e(0,\varepsilon_g)$, is called
{\it the coordinates of the $1$st kind} of the point
$u=\exp\Bigl(\sum\limits_{i=1}^{N}x_iX_i\Bigr)(g)$.

The neighborhood $U(g_0)$ of the point $g_0$ can be chosen so that
$U(g_0) \subset \bigcap\limits_{g \in U(g_0)} O_g$. Then for every
couple of points $u, g \in U(g_0)$ there is the unique tuple of
numbers $(y_1,\dots,y_N)$ such that
$u=\exp\Bigl(\sum\limits_{i=1}^{N}y_i X_i\Bigr)(g)$. For
every couple of points $u$ and $g$ define the non-negative quantity
$$
    d_\infty(u, g) = \max \bigl\{|y_i|^{1/{\deg X_i}} : i = 1,\dots ,N \bigr\}.
$$

An open ball in quasidistance $d_\infty$ of radius $r$ with center in
$g \in \mathcal{M}$ we denote as $\Bx(g,r)$.

% === Subsection 1.5 ===
\subsection{Local geometry of Carnot--Carath\'eodory spaces}
\label{Section:TangentCone}

Using the normal coordinates $\theta_g^{-1}$, define the
{\it dilation} $\Delta^g_\varepsilon : B(g,r) \to B(g, \varepsilon r)$,
$0 < r \leq r_g$: to an element
$x = \exp \Bigl( \sum\limits_{i=1}^N x_i X_i \Bigr)(g)$ we assign
$$
	\Delta^g_\varepsilon x = \exp \Bigl(
	\sum_{i=1}^N x_i \varepsilon^{\deg X_i} X_i \Bigr) (g)
$$
in the case when the right-hand size makes sense. The following theorem
generalizes a result established under additional smoothness of vector
fields in \cite{Bib:Metivier, Bib:RS, Bib:Gromov}.

\begin{theorem}%[\cite{Bib:Karm-Vod, Bib:Greshnov}]
\label{Prop:NilpLieAlgebra}
Let $g$ be  a point  in the Carnot--Carath\'eodory space $\mathcal{M}$.
The following statements hold$:$

$(1)$ Coefficients
$$
    \widehat c_{ijk} =
    \begin{cases}
        c_{ijk}(g), & \text{if}\ \deg X_i + \deg X_j = \deg X_k; \\
        0 & \text{otherwise};
    \end{cases}
$$
where $c_{ijk}(\cdot)$ are the functions from the commutator table
\eqref{Eq:CommCoeff}, define the structure of nilpotent graded Lie
algebra on $T_g \mathcal{M}$.

$(2)$ There are vector fields $\{ \widehat X^g_i \}$
with the initial conditions
$\widehat X^g_i(g) = X_i(g)$, $i=1,\dots,N$, taking place in
$\Bx(g,r_g)$ that constitute a basis of the nilpotent graded
Lie algebra $V(g)$ with the following ``commutator table''$:$
\begin{equation} \label{Eq:NilpCommTable}
    [\widehat X^g_i, \widehat X^g_j] =
    \sum_{k=1}^{N} \widehat c_{ijk} \widehat X^g_k =
    \sum_{\deg X_k = \deg X_i + \deg X_j} c_{ijk}(g) \widehat X^g_k.
\end{equation}

$(3)$  For $x \in \Bx(g, r_g)$  consider the vector fields
$$
X^\varepsilon_i(x) = (\Delta^g_{\varepsilon^{-1}})_* \varepsilon^{\deg X_i}
	X_i(\Delta^g_\varepsilon x), \quad i=1,\dots,N.
$$
Then the following equality holds
\begin{equation} \label{Eq:EpsFieldExpansion}
  X^\varepsilon_i(x) = \widehat X^g_i(x) + \sum_{j=1}^N a_{ij}(x) \widehat X^g_j(x)
\end{equation}
where $a_{ij}(x) = o(\varepsilon^{\max\{0, \deg X_j - \deg X_i\}})$
in $x \in \Bx(g, r_g)$ as $\varepsilon\to 0$.

Moreover, given a compact set $\mathcal K\subset \mathcal M$ there exists
$r>0$ such that the relation \eqref{Eq:EpsFieldExpansion} holds for all
$g\in\mathcal K$ with $x\in \Bx(g, r)$
and $o(\cdot)$ is uniform  in~$g$ belonging to  $\mathcal{K}$ as $\varepsilon\to0$.
\end{theorem}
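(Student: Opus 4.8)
The plan is to dispatch the three assertions in turn, since $(1)$ is purely algebraic, $(2)$ is a realization statement, and $(3)$ carries all the analytic weight. For $(1)$ I would verify directly that the truncated constants $\widehat c_{ijk}$ define a Lie bracket on $T_g\mathcal M$ by the rule $[e_i,e_j]^\wedge=\sum_k\widehat c_{ijk}e_k$ on the basis $e_i=X_i(g)$. Antisymmetry and the grading $\deg e_i=\deg X_i$ (under which $[\cdot,\cdot]^\wedge$ is additive and, by the truncation, homogeneous) are immediate, and nilpotency follows because the bracket strictly raises degree, which is bounded by the depth $M$. The only genuine content is the Jacobi identity, which I would extract from the identity that the $C^1$ fields already satisfy pointwise, $[X_i,[X_j,X_k]]+\text{(cyclic)}=0$: expanding with the commutator table \eqref{Eq:CommCoeff} and collecting the homogeneous component of weighted degree $\deg X_i+\deg X_j+\deg X_k$, one observes that the derivative terms $(X_i c_{jkl})X_l$ cannot contribute to this top degree, since $\deg X_l\le\deg X_j+\deg X_k$ forces $\deg X_l<\deg X_i+\deg X_j+\deg X_k$. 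Hence the top component is built solely from products $c_{jkl}c_{ilm}$ with degrees adding exactly, which is precisely the Jacobi relation for $\widehat c_{ijk}$.

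For $(2)$, given the abstract graded nilpotent Lie algebra from $(1)$, I would realize it concretely. Let $\mathbb G^g$ be the simply connected nilpotent Lie group with Lie algebra $(T_g\mathcal M,[\cdot,\cdot]^\wedge)$, and write its left-invariant frame in exponential coordinates of the first kind; by the Baker--Campbell--Hausdorff formula these are $\Delta^g_\varepsilon$-homogeneous polynomial vector fields whose coefficients $\widehat X_{ij}$ are weighted-homogeneous of weighted degree $\deg X_j-\deg X_i$. Transporting them to the coordinate neighbourhood of $g$ by identifying the first-kind coordinates $\theta_g^{-1}$ with these exponential coordinates defines the $\widehat X^g_i$ on $\Bx(g,r_g)$, with commutator table \eqref{Eq:NilpCommTable} holding by construction. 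The initial condition $\widehat X^g_i(g)=X_i(g)$ follows from $X_i(g)=\partial_{x_i}|_{0}$ in first-kind coordinates, which is exactly the value of the left-invariant frame at the group identity.

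For $(3)$ I would work entirely in first-kind coordinates, writing $X_i=\sum_j X_{ij}(x)\partial_{x_j}$ with $X_{ij}\in C^1$ and $X_{ij}(0)=\delta_{ij}$. A direct computation of the blow-up gives $X^\varepsilon_i(x)=\sum_j\varepsilon^{\deg X_i-\deg X_j}X_{ij}(\Delta^g_\varepsilon x)\,\partial_{x_j}$, to be compared with $\widehat X^g_i=\sum_j\widehat X_{ij}(x)\partial_{x_j}$. The scheme is to isolate from each $X_{ij}$ its weighted-homogeneous leading part of weighted degree $\deg X_j-\deg X_i$, to show this part equals $\widehat X_{ij}$ (this is where the exact-degree-sum truncation defining $\widehat c_{ijk}$ and the commutator table are forced), and to check that the surviving remainder carries a positive power of $\varepsilon$ after the substitution $x\mapsto\Delta^g_\varepsilon x$ together with the prefactor $\varepsilon^{\deg X_i-\deg X_j}$. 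Re-expressing the difference $X^\varepsilon_i-\widehat X^g_i$ in the frame $\{\widehat X^g_j\}$ then yields the coefficients $a_{ij}$ with the claimed order $o(\varepsilon^{\max\{0,\deg X_j-\deg X_i\}})$.

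The main obstacle is that the fields are only $C^1$, so the naive Taylor expansion of $X_{ij}$ to the weighted order $\deg X_j-\deg X_i$ (which may exceed $1$) is unavailable; this is exactly the gap between the present statement and the classical $C^\infty$ results of \cite{Bib:Metivier, Bib:RS, Bib:Gromov}. I expect the resolution to rest on the structural and metric estimates of \cite{Bib:Karm-Vod, Bib:Karm, Bib:KarmG, Bib:Greshnov}: the required homogeneous components of $X_{ij}$ should be produced not by iterated partial derivatives but by iterated commutators of the frame evaluated at $g$, quantities that make sense under $C^1$ regularity because the bracket of $C^1$ fields is continuous and, by condition $(2)$, closes on the frame. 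The delicate points will be to prove that these commutator-generated polynomials coincide with the $\widehat X_{ij}$ and that the remainder is genuinely $o(\cdot)$ of the stated order. Finally, the uniformity of the $o(\cdot)$ in $g$ over a compact $\mathcal K$ I would obtain from the uniform continuity on $\mathcal K$ of the frame, of its first derivatives, and of the structure functions $c_{ijk}$.
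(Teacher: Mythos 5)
First, note that the paper does not prove this theorem in the text at all: immediately after the statement it attributes part $(1)$ to \cite{Bib:Karm-Vod}, part $(2)$ to classical Lie theory \cite{Bib:Lie, Bib:Postnikov}, and part $(3)$ to \cite{Bib:KarmG} (for $C^{1,\alpha}$-smooth fields) and \cite{Bib:Greshnov} (for $C^1$-smooth fields). So the benchmark is the cited literature, not an in-text argument. Your treatment of $(2)$ --- realizing the graded algebra from $(1)$ by the left-invariant frame of the simply connected nilpotent group in exponential coordinates of the first kind and transporting it by $\theta_g$, with the initial condition coming from $D\theta_g(0)\langle\partial_{x_i}\rangle = X_i(g)$ --- is correct and is essentially what the citation to Lie theory amounts to.

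The genuine gap is in your proof of $(1)$, and it is the very regularity issue you flag for $(3)$ but overlook here. Your Jacobi argument expands $[X_i,[X_j,X_k]]+\text{(cyclic)}$ via the table \eqref{Eq:CommCoeff} and discards the terms $(X_i c_{jkl})X_l$ by degree count. But the fields are only $C^1$: the bracket $[X_j,X_k]$ is merely continuous, so the iterated bracket $[X_i,[X_j,X_k]]$ is not classically defined, and the structure functions $c_{jkl}$ are merely continuous, so $X_i c_{jkl}$ does not exist. Your expansion is legitimate only for (at least) $C^2$ fields; that degree-counting computation is the classical one of \cite{Bib:Metivier, Bib:RS, Bib:Gromov}, and the whole point of part $(1)$ at $C^1$ regularity is that it requires a different device (e.g., obtaining the Jacobi identity for $\widehat c_{ijk}$ as the Jacobi identity of the genuine smooth polynomial limit fields once the nilpotentized frame is constructed, or a smoothing of the frame compatible with the filtration), which is why it is a nontrivial result of \cite{Bib:Karm-Vod} rather than a one-line truncation. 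For $(3)$ you correctly identify the crux --- producing a weighted-homogeneous leading part of weighted order possibly $\geq 2$ from coefficients that are only $C^1$ --- but you then explicitly defer exactly that step to the estimates of \cite{Bib:Karm-Vod, Bib:Karm, Bib:KarmG, Bib:Greshnov}. Since the deferred step is the entire content of part $(3)$, your text there is a plan rather than a proof; to be fair, that leaves you in the same position as the paper itself, whose ``proof'' of $(3)$ is likewise a citation to those works.
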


The first statement of theorem is proved in \cite{Bib:Karm-Vod}. The second
follows from the classical Lie theorem \cite{Bib:Lie, Bib:Postnikov}. The third
statement is obtained in \cite{Bib:KarmG} for $C^{1,\alpha}$-smooth vector
fields and in \cite{Bib:Greshnov} for $C^1$-smooth vector fields.

The equality \eqref{Eq:EpsFieldExpansion} implies Gromov's nilpotentization 
theorem with respect to the coordinates of the first kind.
Notice that for the first time it was formulated 
in \cite[p.~130]{Bib:Gromov} in the coordinates of the second kind.

\begin{theorem}[\cite{Bib:KarmG,Bib:Greshnov}]
The uniform convergence $X^\varepsilon_i \to \widehat X^g_i$
as $\varepsilon \to 0$, $i=1,\dots,N$, holds at the points of
$\Bx(g,r_g)$ and this convergence is uniform in $g$ belonging
to some compact neighborhood.
\end{theorem}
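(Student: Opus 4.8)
The plan is to derive the statement directly from the refined pointwise expansion \eqref{Eq:EpsFieldExpansion} furnished by Theorem~\ref{Prop:NilpLieAlgebra}(3). First I would rewrite that expansion as an identity for the difference of the two families of fields,
$$
    X^\varepsilon_i(x) - \widehat X^g_i(x) = \sum_{j=1}^N a_{ij}(x)\, \widehat X^g_j(x),
    \qquad x \in \Bx(g, r_g),
$$
so that establishing uniform convergence reduces to controlling the coefficients $a_{ij}$ together with the sizes of the frame fields $\widehat X^g_j$.

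Next I would exploit the fact that the exponent $\max\{0, \deg X_j - \deg X_i\}$ is always nonnegative. Hence, for $0 < \varepsilon \leq 1$, every coefficient satisfies $a_{ij}(x) = o\bigl(\varepsilon^{\max\{0,\deg X_j-\deg X_i\}}\bigr) = o(1)$ as $\varepsilon \to 0$: when the exponent is strictly positive the bounding power tends to zero, and when it equals zero the estimate is already of order $o(1)$. Thus each $a_{ij}(x) \to 0$, and by the final assertion of Theorem~\ref{Prop:NilpLieAlgebra}(3) this convergence is uniform for $x \in \Bx(g,r)$ and for $g$ ranging over the prescribed compact neighborhood.

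It then remains to bound the frame fields uniformly. Since the $\widehat X^g_j$ depend continuously on $g$ and $x$ and constitute a basis of the nilpotent algebra $V(g)$, their Riemannian norms $\bigl\|\widehat X^g_j(x)\bigr\|$ are bounded by some constant $C$ as $x$ runs over $\Bx(g,r)$ and $g$ over the compact neighborhood, after shrinking $r$ if necessary. The triangle inequality then yields
$$
    \bigl\| X^\varepsilon_i(x) - \widehat X^g_i(x) \bigr\|
    \leq C \sum_{j=1}^N |a_{ij}(x)| \longrightarrow 0
$$
uniformly in $x$ and in $g$, which is precisely the claimed uniform convergence.

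I expect no serious obstacle: the analytic content is entirely carried by the expansion \eqref{Eq:EpsFieldExpansion}, and the argument is essentially its reformulation. The only point demanding a little care is to confirm that the uniformity in $g$ encoded in the $o(\cdot)$ estimate of Theorem~\ref{Prop:NilpLieAlgebra}(3) is compatible with the frame-norm bound on the \emph{same} compact set — that is, that a single radius $r$ can be chosen to serve simultaneously both the validity of the expansion and the uniform boundedness of the $\bigl\|\widehat X^g_j\bigr\|$.
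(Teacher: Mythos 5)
Your proposal is correct and follows essentially the same route as the paper, which derives this theorem directly from the expansion \eqref{Eq:EpsFieldExpansion} of Theorem~\ref{Prop:NilpLieAlgebra}(3) (the sentence preceding the statement notes precisely that this equality implies Gromov's nilpotentization theorem, with details in the cited works \cite{Bib:KarmG,Bib:Greshnov}). Your elaboration --- that every exponent $\max\{0,\deg X_j-\deg X_i\}$ is nonnegative so each $a_{ij}=o(1)$ uniformly, combined with local uniform boundedness of the frame $\widehat X^g_j$ by continuity in $(g,x)$ on a common radius supplied by the uniform version of \eqref{Eq:EpsFieldExpansion} --- is exactly the missing bookkeeping.
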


The Lie algebra from Theorem~\ref{Prop:NilpLieAlgebra} can be constructed
as a graded nilpotent Lie algebra $V'$ of vector fields
$(\widehat X^g_j)'$ in $\mathbb{R}^N$, $j=1,\dots,N$,
such that the exponential mapping
$	
	(x_1, \dots, x_N) \mapsto 
		\exp \Bigl(\sum\limits_{i=1}^N x_i (\widehat X^g_j)' \Bigr) (0)
$
equals identity \cite{Bib:Postnikov, Bib:Lanconelli}.

The connected simply connected Lie group $\mathbb G_g \mathcal{M}$ with the
nilpotent graded Lie algebra $V'$ is called the {\it nilpotent tangent cone}
of the Carnot--Carath\'eodory space $\mathcal{M}$ at the point $g \in \mathcal{M}$.
The condition $(2)$ from the definition of Carnot--Carath\'eodory space provides
that $\mathbb G_g \mathcal{M}$ is a {\it Carnot group}, i.~e. if
we denote $V_k = \spn \{ \widehat X^g_i : \deg X_i = k \}$ then
\begin{align*}
	V' = V_1 \oplus V_2 \oplus \dots \oplus V_M, \quad
	& [V_1, V_k] = V_{k+1}, \quad k=1,\dots,M-1, \\
	& [V_1, V_M] = \{0\}.
\end{align*}

By means of the exponential map we can push-forward the vector
fields $(\widehat X^g_j)'$ onto some neighborhood of $g \in \mathcal{M}$
for obtaining the vector fields
$\widehat X^g_j ( \theta_g(x)) = D \theta_g (x) \langle (\widehat X^g_j)' \rangle$.

To the Carnot group $\mathbb G_g \mathcal{M}$ there corresponds a
{\it local Carnot group} $\mathcal G^g$ with the nilpotent Lie algebra
with the basic vector fields $\widehat X^g_1,\dots,\widehat X^g_N$.
Define it so that the mapping $\theta_g$
is a {\it local group isomorphism} between some neighborhoods of
the identity elements
of the groups $\mathbb G_g\mathcal M$ and ${\mathcal G}^g$.
The group operation for the elements
$x = \exp \Bigl(\sum\limits_{i=1}^N x_i \widehat X^g_i \Bigr)(g) \in \mathcal{G}^g$
and
$y = \exp \Bigl(\sum\limits_{i=1}^N y_i \widehat X^g_i \Bigr)(g) \in \mathcal{G}^g$
is defined by means of local group isomorphism:
\begin{align*}
  x \cdot y
  & = \exp \Bigl(\sum_{i=1}^N y_i \widehat X^g_i \Bigr) \circ
    \exp \Bigl(\sum_{i=1}^N x_i \widehat X^g_i \Bigr)(g) \\
  & = \theta_g \circ
    \exp \Bigl(\sum_{i=1}^N y_i (\widehat X^g)'_i \Bigr) \circ
    \exp \Bigl(\sum_{i=1}^N x_i (\widehat X^g)'_i \Bigr)(0).
\end{align*}

Define the one-parameter dilation group $\delta^g_t$ on $\mathcal{G}^g$:
\par\noindent to the element
$x = \exp \Bigl(\sum\limits_{i=1}^N x_i \widehat X^g_i \Bigr)(g) \in \mathcal{G}^g$,
there corresponds
$$
    \delta^g_t x = \exp \Bigl(
        \sum_{i=1}^{N} x_i t^{\deg X_i}\widehat X^g_i
    \Bigr)(g) \in \mathcal{G}^g, \quad t \in (0, t(x)).
$$
The relation $\delta^g_t x \cdot \delta^g_\tau x = \delta^g_{t\tau} x$
is defined for $t$, $\tau$ such that $t, \tau, t\tau \in (0, t(x))$.

We extend the definition of $\delta^g_t$ on the negative $t$,
setting $\delta^g_t x = \delta^g_{|t|}(x^{-1})$ for $t < 0$.

Since the local Carnot group $\mathcal{G}^g$ itself is a Carnot--Carath\'eodory
space with the collection of vector fields $\{ \widehat X^g_j \}$, it
is endowed with the quasidistance $d_\infty^g(x,y)$.

Throughout the paper we use the following properties.
\begin{property}[\cite{Bib:Karm-Vod}]
Geometric properties of the local Carnot group:

$(1)$ The mapping $\delta^g_t$ is a group automorphism:
for all elements $x, y \in \mathcal{G}^g$ and numbers
$t \in (0, \min\{ t(x), t(y), t(x \cdot y) \})$
we have $\delta^g_t x \cdot \delta^g_t y = \delta^g_t (x \cdot y)$.

$(2)$ The function $\mathcal{G}^g \owns x \to d_\infty^g (g,x)$ is a local
homogeneous norm on $\mathcal{G}^g$, i.~e., it meets the following conditions:

$(a)$ $d_\infty^g(g,x) \geq 0$ for $x \in \mathcal{G}^g$ and $d_\infty^g(g,x)=0$
if and only if $x=g$;

$(b)$ $d_\infty^g (g, \delta^g_tx) = t d_\infty^g(g,x)$ for every
$t \in (0, t(x))$;

$(c)$ $d_\infty^g(g,x \cdot y) \leq Q_1 \bigl( d_\infty^g(g,x) + d_\infty^g(g,y) \bigr)$
for all $x$, $y$, $x \cdot y \in \mathcal{G}^g$. The constant $Q_1$ is bounded
with respect to $g$ in some compact set in $\mathcal{M}$.

$(3)$ The quantity $d_\infty^g (a,b) = d_\infty^g (g, b^{-1} \cdot a)$
is a left invariant distance on
$\mathcal{G}^g$$:$ $d_\infty^g(x \cdot a,x \cdot b) = d_\infty^g(a,b)$ for all
$a$, $b$, $x \in \mathcal{G}^g$ for which the left- and right-hand
sides of the equality make sense.
\end{property}

\begin{property}[\cite{Bib:Karm-Vod}]
Let $g \in \mathcal{M}$. Then
$$
	\exp \Bigl( \sum_{i=1}^N a_i X_i \Bigr)(g)
	= \exp \Bigl( \sum_{i=1}^N a_i \widehat X^g_i \Bigr)(g)
$$
for all $|a_i| < r_g$, $i=1,\ldots,N$.
\end{property}

Observe, that the latter implies $d^g_\infty(g,x) = d_\infty(g,x)$.

\begin{proposition}[\cite{Bib:Karm-Vod, Bib:Karm-Vod1}] \label{propd1}
	The quantity $d_\infty$ is a quasimetric in the sense
	of {\rm \cite{Bib:NSW}} that is the following
	relations hold for all points of the neighborhood $U(g_0)$$:$

	\noindent{$1)$} $d_\infty(u,g)\geq0$,
	$d_\infty(u,g)=0$ if and only if $u=g$$;$

	\noindent{$2)$} $d_\infty(u,g)= d_\infty(g,u)$$;$

	\noindent{$3)$} there is a constant $Q\geq 1$ such that,
	for every triple of points $u$, $w$, $v\in U(g_0)$, we have
	$$
		d_\infty(u,v) \leq Q( d_\infty(u,w) + d_\infty(w,v) ).
	$$
\end{proposition}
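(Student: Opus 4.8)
The plan is to settle the first two properties by direct flow arguments and to reduce the quasi-triangle inequality to a comparison between the homogeneous norms of two nearby local Carnot groups. Throughout I fix the neighborhood $U(g_0)$ small enough that all coordinate representations of the first kind, all backward flows, and all group products used below stay inside the domains where the listed properties of the local Carnot groups $\mathcal{G}^g$ and Theorem~\ref{Prop:NilpLieAlgebra} apply; checking these domain constraints is routine once the neighborhood is chosen small.

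For property $1)$, since $d_\infty(u,g)=\max_i|y_i|^{1/\deg X_i}$ is a maximum of nonnegative numbers it is nonnegative, and it vanishes precisely when all coordinates $y_i$ vanish, i.e. when $u=\theta_g(0)=g$; the converse is the diffeomorphism (hence injectivity) property of $\theta_g$. For property $2)$ I would argue by reversing the trajectory: the relation $u=\exp\bigl(\sum_i y_iX_i\bigr)(g)$ says that $u$ is the value at time $1$ of the flow of the autonomous field $V=\sum_i y_iX_i$ issuing from $g$, so running the same integral curve backwards gives $g=\exp(-V)(u)=\exp\bigl(\sum_i(-y_i)X_i\bigr)(u)$. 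By uniqueness of the coordinates of the first kind, the tuple representing $g$ with base point $u$ is exactly $(-y_1,\dots,-y_N)$, whence $d_\infty(g,u)=\max_i|{-y_i}|^{1/\deg X_i}=d_\infty(u,g)$.

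The quasi-triangle inequality $3)$ is the substantive point. First I would record that inside a \emph{single} local Carnot group $\mathcal{G}^g$ the homogeneous norm genuinely satisfies a quasi-triangle inequality: combining left invariance of $d_\infty^g$, the quasi-subadditivity $d_\infty^g(g,x\cdot y)\le Q_1\bigl(d_\infty^g(g,x)+d_\infty^g(g,y)\bigr)$, and the factorization $c^{-1}\cdot a=(c^{-1}\cdot b)\cdot(b^{-1}\cdot a)$ yields $d_\infty^g(a,c)\le Q_1\bigl(d_\infty^g(a,b)+d_\infty^g(b,c)\bigr)$ for $a,b,c\in\mathcal{G}^g$. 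Next I would transport the three Carnot--Carath\'eodory distances of the triple $(u,w,v)$ into the one group $\mathcal{G}^v$: using the identity $d_\infty^g(g,x)=d_\infty(g,x)$ and the symmetry just established, one gets $d_\infty^v(u,v)=d_\infty(u,v)$ and $d_\infty^v(w,v)=d_\infty(w,v)$ (recall $v$ is the identity of $\mathcal{G}^v$, so $v^{-1}\cdot u=u$). Applying the single-group inequality with $a=u$, $b=w$, $c=v$ then gives
\[
    d_\infty(u,v)\le Q_1\bigl(d_\infty^v(u,w)+d_\infty(w,v)\bigr).
\]

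Everything therefore reduces to bounding $d_\infty^v(u,w)$ --- the separation of $u$ and $w$ measured in the group anchored at $v$ --- by a fixed multiple of $d_\infty(u,w)=d_\infty^w(u,w)$, the \emph{same} pair measured in the group anchored at $w$. This comparison of the homogeneous norms of local Carnot groups at two different (but nearby) base points is the heart of the matter and the step I expect to be the main obstacle, since $d_\infty^v$ and $d_\infty^w$ are built from the distinct nilpotentized frames $\{\widehat X^v_i\}$ and $\{\widehat X^w_i\}$. I would derive it from the uniform local approximation of the frames: by Theorem~\ref{Prop:NilpLieAlgebra}, part $(3)$, the rescaled fields $X^\varepsilon_i$ approximate $\widehat X^g_i$ with $o(\cdot)$-control that is uniform in $g$ over a compact neighborhood, which forces the two frames, and hence the quasi-norms $d_\infty^v$ and $d_\infty^w$, to be comparable with a constant $C$ uniform on that compact set (this is exactly what is established in \cite{Bib:Karm-Vod, Bib:KarmG, Bib:Greshnov}). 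Combining $d_\infty^v(u,w)\le C\,d_\infty^w(u,w)=C\,d_\infty(u,w)$ with the displayed inequality yields property $3)$ with $Q=Q_1\max\{C,1\}$.
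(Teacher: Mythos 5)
Your treatments of properties $1)$ and $2)$ are correct and complete: nonnegativity and nondegeneracy follow from injectivity of $\theta_g$, and the backward-flow argument for symmetry (the autonomous field $V=\sum_i y_iX_i$ satisfies $g=\exp(-V)(u)$, so uniqueness of first-kind coordinates gives the tuple $(-y_1,\dots,-y_N)$) is the standard one. Note for calibration that the paper itself offers no proof of Proposition~\ref{propd1} --- it is imported wholesale from \cite{Bib:Karm-Vod, Bib:Karm-Vod1} --- so your argument must stand on its own merits.

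For property $3)$ there is a genuine gap, sitting exactly where you flagged ``the main obstacle''. By anchoring the in-group inequality at the \emph{endpoint} $v$, you force yourself to prove $d^v_\infty(u,w)\le C\,d^w_\infty(u,w)=C\,d_\infty(u,w)$ with $C$ uniform \emph{at all scale ratios}, in particular when $d_\infty(u,w)$ is much smaller than $d_\infty(v,w)$. That scale-free comparison of homogeneous quasi-norms anchored at two different base points does not follow from Theorem~\ref{Prop:NilpLieAlgebra}$(3)$ or from \cite{Bib:Karm-Vod, Bib:KarmG, Bib:Greshnov}: uniform convergence $X^\varepsilon_i\to\widehat X^g_i$ yields \emph{additive} estimates, $|d^g_\infty(u,w)-d_\infty(u,w)|=o(\varepsilon)$ for $u,w\in\Bx(g,\varepsilon)$ (Theorems~\ref{Th:PathsEstimate} and \ref{Theorem:MetricsDeviation}), and in the regime $d_\infty(u,w)\ll\varepsilon\approx d_\infty(v,w)$ the error $o(\varepsilon)$ swamps $d_\infty(u,w)$, so no multiplicative bound can be extracted. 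The deeper reason is that a frame error of size $\eta$ feeds into a coordinate of degree $m$ whose contribution to the quasi-norm is $\eta^{1/m}$; the change of base point between $\mathcal{G}^v$ and $\mathcal{G}^w$ is thus only H\"older-type in nonhorizontal directions, which is precisely why two-base-point comparisons are the delicate part of this theory rather than a corollary of $C^0$ frame convergence. There is also a bootstrapping issue you pass over: to apply any approximation theorem on a box around $v$ you must first place $u$ in a box of controlled radius about $v$, which already presupposes a triangle-type inequality.

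The gap is repairable with the tools the paper imports, by anchoring at the \emph{middle} point $w$ instead of $v$. Set $s=d_\infty(u,w)+d_\infty(w,v)$. In $\mathcal{G}^w$ both right-hand distances involve the base point, so by the observation $d^g_\infty(g,x)=d_\infty(g,x)$ they equal $d_\infty(u,w)$ and $d_\infty(w,v)$ exactly, and your single-group inequality gives $d^w_\infty(u,v)\le Q_1\bigl(d_\infty(u,w)+d_\infty(w,v)\bigr)=Q_1 s$. Only the left-hand side then needs transferring: Theorem~\ref{Theorem:MetricsDeviation} with $g=w$ and $\varepsilon$ comparable to $s$ (both $u,v$ lie in such a box by the definition of $s$ and symmetry) costs an additive $o(s)$, absorbed once $s$ is below a uniform threshold $r_0$; for $s\ge r_0$ the inequality holds trivially with constant $\sup_{U(g_0)}d_\infty/r_0$ by compactness. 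This eliminates the two-base-point comparison altogether. (One caveat: within this paper's logic Theorem~\ref{Theorem:MetricsDeviation} is an independent imported fact, so the repair is sound here; for a from-scratch proof in the spirit of \cite{Bib:Karm-Vod1} you would need to confirm that the cited comparison theorem is not itself derived from the quasimetric property.)
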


An essential difference between the geometry of a sub-Riemannian space
and the geometry of a Riemannian space is that the metrics of the initial
space and of the nilpotent tangent cone are not bi-Lipschitz equivalent.
Therefore, in studying the questions of the local behavior of the geometric
objects, it is important to know estimates of the deviation of one metric
from another.

\begin{theorem}[{\cite[Theorem 8]{Bib:Karm-Vod1}}]
\label{Th:PathsEstimate}
Assume that $g$, $w_0 \in U(g_0)$ satisfy $d_\infty(g, w_0) = C \varepsilon$.
For a fixed $L \in \mathbb{N}$, consider the points
$$
  \widehat w^\varepsilon_j
   = \exp \Bigl( \sum_{i=1}^N w_{i,j} \varepsilon^{\deg X_j} \widehat X^g_j \Bigr)
   ( \widehat w^\varepsilon_{j-1} ),	
  \quad w^\varepsilon_j
   = \exp \Bigl( \sum_{i=1}^N w_{i,j} \varepsilon^{\deg X_j} X_j \Bigr)
   ( w^\varepsilon_{j-1} ),
$$
$\widehat w^\varepsilon_0 = w^\varepsilon_0 = w_0$, $j = 1, \dots, L$. Then
$$
	\max \{ d^g_\infty ( \widehat w^\varepsilon_L, w^\varepsilon_L),
	d_\infty(\widehat w^\varepsilon_L, w^\varepsilon_L) \}
	= o(\varepsilon) \quad \text{~as~} \varepsilon \to 0,
$$
where $o(\varepsilon)$ is uniform in $g$, $w_0 \in U(g_0)$ and
$\{ w_{i,j} \}$, $i=1,\dots,N$, $j=1,\dots,L$, in some compact
neighborhood of $0$ and $\varepsilon > 0$.
\end{theorem}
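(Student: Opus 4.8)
The plan is to blow up the whole configuration around $g$ by the dilation $\Delta^g_{\varepsilon^{-1}}$, thereby turning the two scale-$\varepsilon$ broken paths into flows of two families of vector fields that converge to one another, to compare these flows by a Gronwall estimate, and finally to rescale back using the exact homogeneity of $d^g_\infty$.

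First, recall that near $g$ the dilation $\Delta^g_\varepsilon$ of the coordinates of the first kind coincides with the group dilation $\delta^g_\varepsilon$ of the local Carnot group $\mathcal G^g$ (this follows from the Property asserting $\exp\bigl(\sum_i a_i X_i\bigr)(g)=\exp\bigl(\sum_i a_i\widehat X^g_i\bigr)(g)$ for small $a_i$, together with the definitions of $\Delta^g_\varepsilon$ and $\delta^g_\varepsilon$). Set $u_j=\Delta^g_{\varepsilon^{-1}}w^\varepsilon_j$ and $\widehat u_j=\Delta^g_{\varepsilon^{-1}}\widehat w^\varepsilon_j$, so that $u_0=\widehat u_0=\Delta^g_{\varepsilon^{-1}}w_0$ sits at the fixed distance $d_\infty(g,u_0)=C$ from $g$, uniformly in $\varepsilon$. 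Since $\widehat X^g_i$ is $\delta^g$-homogeneous of degree $\deg X_i$, the push-forward $(\Delta^g_{\varepsilon^{-1}})_*\varepsilon^{\deg X_i}\widehat X^g_i$ equals $\widehat X^g_i$ again, while by definition $(\Delta^g_{\varepsilon^{-1}})_*\varepsilon^{\deg X_i}X_i=X^\varepsilon_i$. Hence the two paths transform into
\[
  \widehat u_j=\exp\Bigl(\sum_{i=1}^N w_{i,j}\widehat X^g_i\Bigr)(\widehat u_{j-1}),
  \qquad
  u_j=\exp\Bigl(\sum_{i=1}^N w_{i,j}X^\varepsilon_i\Bigr)(u_{j-1}),
\]
so the nilpotent path is now $\varepsilon$-independent in form, whereas the original one is governed by the fields $X^\varepsilon_i$.

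Second, I would compare $u_L$ with $\widehat u_L$ step by step. By Theorem~\ref{Prop:NilpLieAlgebra}(3) the expansion $X^\varepsilon_i=\widehat X^g_i+\sum_j a_{ij}\widehat X^g_j$ holds on $\Bx(g,r)$ with $a_{ij}=o(1)$ uniformly in $g$ over the compact set, so $X^\varepsilon_i\to\widehat X^g_i$ uniformly there. Taking the smooth nilpotent field as the reference (its $C^1$-norm is bounded uniformly in $g$, since the structure constants $c_{ijk}(g)$ are continuous), a Gronwall estimate on each of the $L$ steps yields, in the chart, $|u_j-\widehat u_j|\le e^{K}\bigl(|u_{j-1}-\widehat u_{j-1}|+o(1)\bigr)$; inducting from $u_0=\widehat u_0$ gives $|u_L-\widehat u_L|=o(1)$, uniformly in $g$, $w_0$ and $\{w_{i,j}\}$. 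As all vertices stay in a fixed compact part of $\Bx(g,r)$ for $\varepsilon$ small, and there $d^g_\infty$ is topologically equivalent to the Euclidean distance, we obtain $d^g_\infty(u_L,\widehat u_L)\to 0$ uniformly as $\varepsilon\to 0$. Rescaling back, writing $w^\varepsilon_L=\delta^g_\varepsilon u_L$, $\widehat w^\varepsilon_L=\delta^g_\varepsilon\widehat u_L$ and using left invariance together with the automorphism and homogeneity properties of $\delta^g_\varepsilon$, one gets $d^g_\infty(w^\varepsilon_L,\widehat w^\varepsilon_L)=\varepsilon\,d^g_\infty(u_L,\widehat u_L)=o(\varepsilon)$, which is the first half of the claim. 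For the second half I would invoke the local approximation of $d_\infty$ by $d^g_\infty$ at small scales: the rescaled quasimetric $(u,v)\mapsto\varepsilon^{-1}d_\infty(\delta^g_\varepsilon u,\delta^g_\varepsilon v)$ converges to $d^g_\infty(u,v)$ uniformly on compacta (this rests on $d_\infty(g,\cdot)=d^g_\infty(g,\cdot)$ and Theorem~\ref{Prop:NilpLieAlgebra}(3)); combined with $d^g_\infty(u_L,\widehat u_L)\to 0$ it gives $\varepsilon^{-1}d_\infty(w^\varepsilon_L,\widehat w^\varepsilon_L)\to 0$, i.e. $d_\infty(w^\varepsilon_L,\widehat w^\varepsilon_L)=o(\varepsilon)$ as well.

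The main obstacle, I expect, is forcing every estimate to be uniform in the three families of parameters simultaneously — the base point $g$ in the compact set, the blown-up initial point $u_0$, and the coefficients $w_{i,j}$ — while the vector fields are only $C^1$ and $X^\varepsilon_i$ converges merely in $C^0$. Two points need care: keeping all broken-path vertices inside the common radius $\Bx(g,r)$ on which the expansion of Theorem~\ref{Prop:NilpLieAlgebra}(3) is valid, which forces $r$ to be chosen large relative to the fixed data $C$, $L$ and $\sup|w_{i,j}|$; and the passage from the $d^g_\infty$-estimate to the $d_\infty$-estimate, which genuinely requires the local approximation of the Carnot--Carath\'eodory quasimetric by the homogeneous one and does not follow formally from the blow-up alone.
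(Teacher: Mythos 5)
The paper contains no proof of this statement to compare against: Theorem~\ref{Th:PathsEstimate} is imported verbatim from \cite{Bib:Karm-Vod1} (Theorem~8 there), and the text around it only records where the underlying estimates were established. Judged on its own, your blow-up argument is essentially the strategy of the cited literature and is sound in outline: rescale by $\Delta^g_{\varepsilon^{-1}}$, note that the nilpotentized broken path becomes $\varepsilon$-independent in form while the original one is driven by the fields $X^\varepsilon_i$, invoke the uniform convergence $X^\varepsilon_i \to \widehat X^g_i$ of Theorem~\ref{Prop:NilpLieAlgebra}$(3)$, compare the $L$ flows by Gronwall, and rescale back using the exact homogeneity and left invariance of $d^g_\infty$. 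Your key structural observation is the right one: since the claim is only $o(\varepsilon)$ and not a power rate, a qualitative uniform $o(1)$ at scale $1$ suffices, and the anisotropic loss (a Euclidean error in a coordinate of degree $m$ costing an $m$-th root) is avoided precisely because the Euclidean-to-$d^g_\infty$ conversion happens at unit scale; this is why the statement survives with merely $C^1$ fields. (For full uniformity in $g$ that conversion also needs uniform continuity of $(g,u,v)\mapsto d^g_\infty(u,v)$, which you effectively have from the continuity of the structure constants $c_{ijk}$.)

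Two points need repair. First, your fix for the domain problem is backwards: the radius $r$ of Theorem~\ref{Prop:NilpLieAlgebra}$(3)$ is dictated by the geometry and \emph{cannot} be ``chosen large relative to $C$, $L$ and $\sup|w_{i,j}|$''. The correct normalization shrinks the data instead: replace $\varepsilon$ by $\varepsilon' = \lambda\varepsilon$ with a fixed large $\lambda$, i.e.\ blow up by $\Delta^g_{(\lambda\varepsilon)^{-1}}$; this transforms $C \mapsto C/\lambda$ and $w_{i,j} \mapsto \lambda^{-\deg X_i} w_{i,j}$, so the rescaled vertices stay inside $\Bx(g,r)$ (via the quasi-triangle inequality with constant $Q^L$), and it costs nothing since $o(\lambda\varepsilon) = o(\varepsilon)$ for fixed $\lambda$. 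Second, the passage from the $d^g_\infty$-estimate to the $d_\infty$-estimate via Theorem~\ref{Theorem:MetricsDeviation} is legitimate \emph{within this paper}, where that theorem is quoted as an independent result (Theorem~6 of \cite{Bib:Karm-Vod1}); but be aware that in the source both statements come out of the same flow-comparison machinery — the metric comparison is essentially the one-exponential case of the present theorem — so a genuinely self-contained proof would have to re-derive the uniform convergence of the rescaled $d_\infty$ to $d^g_\infty$ rather than cite it. Your parenthetical justification (``this rests on $d_\infty(g,\cdot)=d^g_\infty(g,\cdot)$ and Theorem~\ref{Prop:NilpLieAlgebra}$(3)$'') is not itself a proof of that convergence, since the quoted identity only controls distances measured from the center~$g$, not between two off-center points.
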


\begin{theorem}[{\cite[Theorem 6]{Bib:Karm-Vod1}}]
\label{Theorem:MetricsDeviation}
Consider points $g \in \mathcal{M}$ and $u$, $v \in \Bx(g,\varepsilon)$,
where $\varepsilon \in (0, r_g)$. Then
$$
  |d^g_\infty(u,v) - d_\infty(u,v)| = o(\varepsilon)
  \quad \text{~as~} \varepsilon \to 0,
$$
where $o(\varepsilon)$ is uniform in $u$, $v \in \Bx(g,\varepsilon)$
and $g$ belonging to some compact set.
\end{theorem}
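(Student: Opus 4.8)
The plan is to reduce everything to the unit scale by the dilation $\Delta^g_{\varepsilon^{-1}}$ and then compare the two quasimetrics through the closeness of original-field flows to nilpotent-field flows. First I would fix $u, v \in \Bx(g,\varepsilon)$ and record that $d_\infty(u,v) = O(\varepsilon)$: by the quasi-triangle inequality of Proposition~\ref{propd1}, $d_\infty(u,v) \le Q\bigl(d_\infty(u,g)+d_\infty(g,v)\bigr) < 2Q\varepsilon$. Writing the coordinates of the first kind of $v$ based at $u$, say $v = \exp\bigl(\sum_{i=1}^N y_i X_i\bigr)(u)$, this bound means $|y_i| = O(\varepsilon^{\deg X_i})$, so that $y_i = w_i \varepsilon^{\deg X_i}$ with $\{w_i\}$ ranging over a fixed compact set as $\varepsilon \to 0$, and $d_\infty(u,v) = \max_i |y_i|^{1/\deg X_i} = \varepsilon \max_i |w_i|^{1/\deg X_i}$.

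The key idea is to introduce the nilpotent counterpart of $v$ obtained by flowing along the frozen fields with the \emph{same} coordinates, $\widehat v := \exp\bigl(\sum_{i=1}^N y_i \widehat X^g_i\bigr)(u)$. Two facts then drive the argument. On one hand, since $g$ is the identity of $\mathcal G^g$ and $\widehat v = u \cdot p$ with $p$ the element having first-kind coordinates $y_i$, left-invariance of $d^g_\infty$ gives $d^g_\infty(u,\widehat v) = d^g_\infty(g,p) = \max_i |y_i|^{1/\deg X_i} = d_\infty(u,v)$ \emph{exactly}. On the other hand, applying the paths estimate of Theorem~\ref{Th:PathsEstimate} with $L=1$, base point $w_0 = u$, and the bounded coefficients $\{w_i\}$, the endpoints $\widehat w^\varepsilon_1 = \widehat v$ (nilpotent field) and $w^\varepsilon_1 = v$ (original field) satisfy $d^g_\infty(\widehat v, v) = o(\varepsilon)$, uniformly in $g, u \in U(g_0)$ and in the coefficients. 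Thus $\widehat v$ realizes $d_\infty(u,v)$ precisely while lying $o(\varepsilon)$-close to $v$ in the nilpotent metric, and it remains to transfer this closeness through the base point $u$, i.e. to show $\bigl| d^g_\infty(u,v) - d^g_\infty(u,\widehat v)\bigr| = o(\varepsilon)$.

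This last step is the main obstacle, because $d^g_\infty$ is only a quasimetric: its triangle inequality carries the constant $Q_1 > 1$, so the difference $\bigl| d^g_\infty(u,v) - d^g_\infty(u,\widehat v)\bigr|$ is not directly controlled by $d^g_\infty(v,\widehat v)$. I would resolve this by rescaling to unit size. Using left-invariance, write $d^g_\infty(u,v) = d^g_\infty(g, a)$ and $d^g_\infty(u,\widehat v) = d^g_\infty(g,b)$ with $a = u^{-1}\cdot v$ and $b = u^{-1}\cdot\widehat v$; then $a^{-1}\cdot b = v^{-1}\cdot\widehat v$, so $d^g_\infty(g, a^{-1}\cdot b) = d^g_\infty(v,\widehat v) = o(\varepsilon)$, while $d^g_\infty(g,a)$ and $d^g_\infty(g,b)$ are $O(\varepsilon)$. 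Applying the automorphism $\delta^g_{\varepsilon^{-1}}$ and the homogeneity $d^g_\infty(g,\delta^g_{\varepsilon^{-1}}x) = \varepsilon^{-1} d^g_\infty(g,x)$, the points $A = \delta^g_{\varepsilon^{-1}}a$ and $B = \delta^g_{\varepsilon^{-1}}b$ stay in a fixed compact set and satisfy $d^g_\infty(g, A^{-1}\cdot B) = o(1)$, i.e. $B \to A$ uniformly. Since the homogeneous norm $x \mapsto d^g_\infty(g,x)$ is continuous, hence uniformly continuous on compacta (uniformly in $g$ on a compact set, by the uniform control of $Q_1$ and of the structure constants in Theorem~\ref{Prop:NilpLieAlgebra}), I conclude $\bigl| d^g_\infty(g,A) - d^g_\infty(g,B)\bigr| = o(1)$, and multiplying back by $\varepsilon$ gives $\bigl| d^g_\infty(u,v) - d^g_\infty(u,\widehat v)\bigr| = o(\varepsilon)$. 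Combining with $d^g_\infty(u,\widehat v) = d_\infty(u,v)$ yields $|d^g_\infty(u,v) - d_\infty(u,v)| = o(\varepsilon)$, all error terms being uniform by construction, which is the assertion.
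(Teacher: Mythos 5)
The paper itself contains no proof of Theorem~\ref{Theorem:MetricsDeviation}: it is imported verbatim as Theorem~6 of \cite{Bib:Karm-Vod1}, so there is no in-paper argument to compare yours against; I can only judge your proof on its own merits, relative to the toolkit the paper does state. On those terms it is correct. The central exactness observation, $d^g_\infty(u,\widehat v)=d_\infty(u,v)$, holds --- indeed you do not even need the left-invariance detour, since by definition $d^g_\infty$ is the box quasimetric of the Carnot--Carath\'eodory space $(\mathcal{G}^g,\{\widehat X^g_j\})$, so both quantities equal $\max_i |y_i|^{1/\deg X_i}$ directly. The application of Theorem~\ref{Th:PathsEstimate} with $L=1$, base point $u$, and coefficients $w_i$ ranging in a fixed compact set is squarely within its hypotheses and yields $d^g_\infty(v,\widehat v)=o(\varepsilon)$ uniformly. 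You also correctly diagnosed the genuine obstruction: with $Q_1>1$ the quasi-triangle inequality gives only $|d^g_\infty(u,v)-d^g_\infty(u,\widehat v)|=O(\varepsilon)$, and note that even after your rescaling it still gives only $N(A)-N(B)\le (Q_1-1)N(B)+Q_1\,o(1)=(Q_1-1)\,O(1)+o(1)$, so the uniform continuity of the homogeneous norm is truly the load-bearing fact, not a cosmetic one.

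Two routine points should be made explicit to close the argument. First, the claim that $(g,x)\mapsto d^g_\infty(g,x)$ admits a modulus of continuity uniform in $g$ on a compact set, and that $d^g_\infty(A,B)=o(1)$ forces coordinate (Euclidean) closeness of $A$ and $B$: both follow from continuity of the structure functions $c_{ijk}(\cdot)$ in Theorem~\ref{Prop:NilpLieAlgebra} and continuous dependence of the flows of $\widehat X^g_i$ on $g$ and on initial data, but a sentence of justification is needed since the paper states only continuity of the norm for fixed $g$. Second, applicability of $\delta^g_{\varepsilon^{-1}}$: Property~1.10 gives homogeneity only for $t\in(0,t(x))$, so you must check the rescaled points remain in the domain of the local group; since $d^g_\infty(g,a),\,d^g_\infty(g,b)\le C\varepsilon$ with $C$ depending only on $Q$, $Q_1$, rescaling instead by $(K\varepsilon)^{-1}$ for a fixed large $K$, together with $r_g$ bounded below on the compact set, settles this. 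With these two lines added, your proof is complete and delivers the required uniformity.
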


% === Subsection 1.3 ===
\subsection{The coordinates of the 2nd kind}
\label{Section:Coord2}
In the neighborhood of a point $g_0$ consider the same family
of the basic vector fields
$\{X_1, \dots, X_{\dim H_1}, X_{\dim H_1+1}, \dots, X_N\}$
as in definition of the coordinates of the first kind. It is known
that the mapping
\begin{equation}\label{2kind}
	(a_1,\dots,a_N) \mapsto
	\exp(a_N X_N)\circ\dots\circ\exp(a_1 X_1)(g)
\end{equation}
is a $C^1$-diffeomorphism of some neighborhood
$B_e(0,\varepsilon)\subset\mathbb R^N$ to a neighborhood
$V(g)$ of $g$ (so called \textit{coordinates of the second kind}).
Similarly to the case of the coordinates of the first kind we can choose
a neighborhood $U(g_0)$ such that 
$U(g_0)\subset\bigcap\limits_{g\in U(g_0)}V(g)$.

For the points $u,g\in U(g_0)$, $u=\exp(a_N X_N)\circ \dots \circ\exp(a_1 X_1)(g)$,
by means of the coordinates of the 2nd kind we can define a quantity
$$
    d_2(u,g)=\max\bigl\{|a_i|^{1/\deg X_i} : i=1,\dots,N\bigr\}.
$$
Next we show that the quantity $d_2(u,g)$ is comparable with the quasimetric
$d_\infty(u,g)$ in a neighborhood $U(g_0)$ i.~e.
\begin{equation}\label{comp}
    c_1 d_\infty(u,g)\leq d_2(u,g)\leq c_2 d_\infty(u,g)
\end{equation}
for all points $u$, $g\in U(g_0)$ and positive constants $c_1$ and $c_2$
independent of $u$, $g\in U(g_0)$.

\begin{remark} \label{Rem:MetricsEquality}
 For Carnot groups the equivalence of $d_\infty$ and $d_2$
  is known (see, for instance, \cite{Bib:Folland-Stein}).
  This means that if $d^g_\infty$ and $d^g_2$ are quasimetrics in the local
  Carnot group $\mathcal{G}^g$, $g \in \mathcal{M}$, then there
  are constants $c^g_1$ and $c^g_2$ such that
  \begin{equation} \label{Eq:LocalMetricsEquality}
    c^g_1 d^g_\infty(u,v) \leq d^g_2(u,v) \leq c^g_2 d^g_\infty(u,v)
  \end{equation}
  for all $u$, $v \in \mathcal{G}^g$.
\end{remark}

\begin{proposition} \label{Prop:MetricsEquality}
    There are constants $c_1$ and $c_2$ such that
    inequalities ~$\eqref{comp}$ hold for all points $u$, $g$
    in some neighborhood $U(g_0)$ in which quasimetrics
    $d_\infty$ and $d_2$ are defined.
\end{proposition}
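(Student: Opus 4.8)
The plan is to transfer the comparison to the local Carnot group $\mathcal{G}^g$, where the equivalence of the two box quasimetrics is already available by Remark~\ref{Rem:MetricsEquality}, and then to push the result back to $\mathcal{M}$ by means of the path estimate of Theorem~\ref{Th:PathsEstimate}. Fix $g$ and $u$ in a relatively compact neighborhood $U(g_0)$ and write the second-kind representation $u = \exp(a_N X_N)\circ\dots\circ\exp(a_1 X_1)(g)$, so that $\rho := d_2(u,g) = \max_i |a_i|^{1/\deg X_i}$. Put $\varepsilon = \rho$ and $a_i = w_i\varepsilon^{\deg X_i}$, whence $|w_i| \leq 1$ and the coefficients $w_i$ range over a fixed compact set. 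Alongside $u$ I introduce its nilpotent companion $\widehat u = \exp(a_N \widehat X^g_N)\circ\dots\circ\exp(a_1 \widehat X^g_1)(g)$, for which $d^g_2(\widehat u, g) = \rho$ directly from the definition of the second-kind quasimetric in $\mathcal{G}^g$.

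First I would apply Theorem~\ref{Th:PathsEstimate} with starting point $w_0 = g$ (the degenerate case $C=0$) and the coefficients $w_i$ just fixed; since $u$ and $\widehat u$ are precisely the endpoints of the initial and the nilpotentized chains, the theorem yields $d_\infty(\widehat u, u) = o(\varepsilon)$ and $d^g_\infty(\widehat u, u) = o(\varepsilon)$, uniformly in $g, u \in U(g_0)$. Next, the equivalence \eqref{Eq:LocalMetricsEquality} inside $\mathcal{G}^g$ makes $d^g_\infty(\widehat u, g)$ comparable to $d^g_2(\widehat u, g) = \rho$, giving $\rho/c^g_2 \leq d^g_\infty(\widehat u, g) \leq \rho/c^g_1$. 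Finally, the identity $\exp(\sum_i a_i X_i)(g) = \exp(\sum_i a_i \widehat X^g_i)(g)$ noted earlier gives the exact equality $d_\infty(u,g) = d^g_\infty(u,g)$, which links the original metric to the Carnot-group one without error.

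It then remains to chain these facts through the quasi-triangle inequalities. From $d^g_\infty(u,g) \leq Q_1\bigl(d^g_\infty(u,\widehat u) + d^g_\infty(\widehat u, g)\bigr) \leq Q_1\bigl(o(\rho) + \rho/c^g_1\bigr)$ one bounds $d_\infty(u,g)$ above by a constant multiple of $\rho = d_2(u,g)$, which gives the left inequality $c_1 d_\infty \leq d_2$ in \eqref{comp}; symmetrically, from $\rho/c^g_2 \leq d^g_\infty(\widehat u, g) \leq Q_1\bigl(d^g_\infty(\widehat u, u) + d^g_\infty(u,g)\bigr) = Q_1\bigl(o(\rho) + d_\infty(u,g)\bigr)$ one bounds $d_2(u,g) = \rho$ above by a constant multiple of $d_\infty(u,g)$, giving the right inequality $d_2 \leq c_2 d_\infty$. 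In both estimates the $o(\varepsilon) = o(\rho)$ remainders are absorbed into the main terms by shrinking $U(g_0)$ so that $\rho$ stays below a fixed threshold; since the quasi-triangle constant $Q_1$ is bounded over $\overline{U(g_0)}$, the resulting $c_1, c_2$ are uniform.

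The main obstacle I anticipate is the uniformity in $g$ of the Carnot-group equivalence constants $c^g_1, c^g_2$: the equivalence \eqref{Eq:LocalMetricsEquality} is supplied for each fixed $g$, but the argument above needs these constants bounded away from $0$ and $\infty$ simultaneously over $U(g_0)$. I would obtain this from the continuous dependence of the nilpotent structure constants $\widehat c_{ijk} = c_{ijk}(g)$ on $g$ furnished by Theorem~\ref{Prop:NilpLieAlgebra}: the homogeneous norms $x \mapsto d^g_\infty(g,x)$ and $x \mapsto d^g_2(g,x)$ vary continuously with $g$, so a compactness argument over the product of $\overline{U(g_0)}$ with a fixed unit $d^g_\infty$-sphere produces uniform two-sided bounds on their ratio. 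The remaining care is bookkeeping: checking that all points stay inside the balls $\Bx(g, r_g)$ where $d^g_\infty$, $d^g_2$ and the path estimate are valid, which again holds after shrinking $U(g_0)$.
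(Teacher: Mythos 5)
Your proposal is correct, but it takes a genuinely different route from the paper's proof, which handles the two inequalities of \eqref{comp} by separate mechanisms. For the left inequality $c_1 d_\infty \leq d_2$ the paper uses nothing but the generalized triangle inequality for $d_\infty$, chained along the points $y_k = \exp(a_k X_k)(y_{k-1})$ (estimate \eqref{Eq:1by2}), which gives the explicit constant $c_1 = (NQ^{N-1})^{-1}$ with no nilpotent approximation at all; for the right inequality it argues by contradiction, renormalizing putative violating pairs $x_n$, $y_n$ with $d_2(x_n,y_n) = \varepsilon_n \geq n\, d_\infty(x_n,y_n)$ to a fixed $d_2$-scale $\delta$ via the second-kind dilations $\mathfrak{D}^{x_n}_{\delta/\varepsilon_n}$, and then using Theorem~\ref{Th:PathsEstimate} together with \eqref{Eq:LocalMetricsEquality} to show the renormalized $d_2$-distance tends to $0$, contradicting $d_2(x_n, \mathfrak{D}^{x_n}_{\delta/\varepsilon_n} y_n) = \delta$. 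You instead run a single direct comparison: the hatted companion $\widehat u$ satisfies $d^g_2(\widehat u, g) = d_2(u,g) = \rho$ exactly; Theorem~\ref{Th:PathsEstimate}, applied in the same degenerate form $w_0 = g$ that the paper itself uses when comparing $y_n$ with $z_n$, gives $d^g_\infty(u,\widehat u) = o(\rho)$ uniformly (legitimate even though $\varepsilon = \rho$ varies with the pair, since the $o(\cdot)$ is uniform over coefficients in a compact set and $\rho$ can be driven uniformly below any threshold by shrinking $U(g_0)$); both inequalities then follow from quasi-triangle chaining and absorption of the uniform $o(\rho)$ into the main term. The trade-offs: your route produces both inequalities from one computation, with semi-explicit constants (roughly $c_2 = 2 Q_1 \sup_g c^g_2$), and avoids the dilation and contradiction bookkeeping entirely; its price is that the uniform boundedness in $g$ of the Carnot-group constants $c^g_1$, $c^g_2$ from Remark~\ref{Rem:MetricsEquality} must be secured up front. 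You correctly identify this as the crux, and your fix is sound: in first-kind coordinates $d^g_\infty(g,\cdot)$ is $g$-independent, the group law (hence $d^g_2(g,\cdot)$) depends continuously on $g$ through the structure constants $c_{ijk}(g)$, and homogeneity under the dilations transfers the compactness bound on the unit sphere to all scales. Note that the paper's own argument tacitly relies on the same uniformity (its claim that $O(\cdot)$ is uniform in $\overline{\Bx}(g_0,r_0)$ when invoking $c^{x_n}_2$), so you assume no more than the paper does --- you merely make the hidden step explicit. The only inefficiency is your treatment of the left inequality, which is valid but heavier than necessary: the paper's elementary chaining obtains it without any nilpotentization machinery.
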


\begin{proof}
Let $u$, $g \in U(g_0)$ be arbitrary points and $d_2(u,g)=r$.
Assuming that $y_0 = g$,
$y_1 = \exp (a_1 X_1)(y_0), \dots, y_N = \exp(a_N X_N)(y_{N-1})$
from the generalized triangle inequality (see Proposition \ref{propd1})
we have the following relations
\begin{align}
	d_\infty(u,g)
	& \leq Q^{N-1} \Bigl( \sum_{i=1}^N d_\infty(y_k,y_{k-1}) \Bigr) \notag \\
	& \leq Q^{N-1} \Bigl( \sum_{i=1}^N |a_i|^{\frac 1 {\deg X_i}} \Bigr)
	\leq N Q^{N-1} r=N Q^{N-1} d_2(u,g). \label{Eq:1by2}
\end{align}
Thus the left inequality in \eqref{comp} is proved with
$c_1=(N Q^{N-1})^{-1}$.

Next, suggest that the right inequality in \eqref{comp} does not hold
in some closed ball $\overline\Bx(g_0,2 r_0)$. Then there are sequences of
points $x_n$, $y_n \in \overline{\Bx}(g_0,r_0)$ converging to the
same point $x_0 \in \overline{\Bx}(g_0, r_0)$, such that
$$
	\varepsilon_n = d_2 (x_n, y_n) \geq n \, d_\infty (x_n, y_n),
$$
where $\varepsilon_n \to 0$ as $n \to \infty$
(otherwise the right inequality in \eqref{comp} would be fulfilled in
$\overline{\Bx}(g_0, r_0)$).
Define on $\overline{\Bx}(g_0,r_0)$
dilations $\mathfrak{D}^g_t$ and $\widehat{\mathfrak{D}}^g_t$
as follows: to an element
$x = \exp(x_N X_N) \circ \dots \circ \exp(x_1 X_1)(g) \in \overline{\Bx}(g_0,r_0)$
assign
$$
	\mathfrak{D}^g_t x
	 = \exp (x_N t^{\deg X_N} X_N)
	 \circ \dots \circ
	 \exp (x_1 t X_1) (g)
$$
and to an element
$\hat x =  \exp(x_N \widehat X^g_N) \circ \dots \circ \exp(x_1 \widehat X^g_1)(g)
\in \overline{\Bx}(g_0,r_0) \cap \mathcal{G}^g$ assign
$$
	\widehat{\mathfrak{D}}^g_t \hat x
	 = \exp (x_N t^{\deg X_N} \widehat X^g_N)
	 \circ \dots \circ
	 \exp (x_1 t \widehat X^g_1) (g).
$$
Observe that $d_2(g, \mathfrak{D}^g_t x) = t d_2(g,x)$ and
$d^g_2(g, \widehat{\mathfrak{D}}^g_t x) = t d^g_2(g, x)$.
Let
$$
	0 < \delta = \sup \{ t > 0 :
	  \mathfrak{D}^g_t x,
	  \widehat{\mathfrak{D}}^g_t x \in \overline{\Bx}(g_0,2 r_0)
	  \text{~for all~} x,g \in \overline{\Bx}(g_0,r_0) \}.
$$
Then $\mathfrak{D}^{x_n}_{\delta / \varepsilon_n} y_n \in \overline{\Bx}(g_0, 2 r_0)$
and
\begin{equation} \label{Eq:D2contra}
	d_2 (x_n, \mathfrak{D}^{x_n}_{\delta / \varepsilon_n} y_n )
	= \frac {\delta} {\varepsilon_n} d_2(x_n, y_n) = \delta > 0.
\end{equation}

Represent $y_n$ in coordinates of the 2nd kind as
$y_n = \exp(y_{nN} X_N) \circ \dots \circ \exp(y_{n1} X_1) (x_n)$
and define
$$
  z_n = \exp(y_{nN} \widehat X^g_N)
  \circ \dots \circ \exp(y_{n1} \widehat X^g_1) (x_n).
$$

Since $d_\infty(x_n, y_n) = d^{x_n}_\infty (x_n, y_n) \leq \frac{\varepsilon_n}{n}$,
from \eqref{Eq:LocalMetricsEquality} it follows
$$
	d^{x_n}_2 (x_n, y_n)
	\leq c^{x_n}_2 d^{x_n}_\infty (x_n, y_n)
	\leq c^{x_n}_2 \frac {\varepsilon_n} {n}
	= O \Bigl(\frac {\varepsilon_n}{n} \Bigr)
$$
where $O(\cdot$) is
uniform in $\overline{\Bx}(g_0,r_0)$. This means that in the representation
$$
	y_n = \exp(v_{nN} \widehat X^g_N)
	\circ \dots \circ
	\exp(v_{n1} \widehat X^g_1) (x_n)
$$
the coordinates $v_j$ meet the property
$|v_{nj}|^{\deg X_j} = O(\frac {\varepsilon_n}{n})$.
Then we can apply Theorem~\ref{Th:PathsEstimate}
to points $y_n$ and $z_n$
and derive that $d^{x_n}_\infty(y_n, z_n) =
o (\frac {\varepsilon_n}{n})$.
Consequently,
$$
	d^{x_n}_\infty(x_n, z_n)
	\leq C (d^{x_n}_\infty(x_n, y_n) + d^{x_n}_\infty(y_n, z_n))
	= O \Bigl( \frac {\varepsilon_n}{n} \Bigr) + o \Bigl( \frac {\varepsilon_n}{n} \Bigr)
	= O \Bigl( \frac {\varepsilon_n}{n} \Bigr).
$$

From Theorem~\ref{Th:PathsEstimate} it also follows
$d^{x_n}_\infty(\mathfrak{D}^{x_n}_{\delta/\varepsilon_n} y_n,
\widehat{\mathfrak{D}}^{x_n}_{\delta/\varepsilon_n} z_n) =
o (\frac {1}{n})$.
Therefore,
\begin{align*}
	 d^{x_n}_2(x_n, \mathfrak{D}^{x_n}_{\delta/\varepsilon_n} y_n)
	& \leq C_1 \bigl(
		d^{x_n}_2(x_n, \widehat{\mathfrak{D}}^{x_n}_{\delta/\varepsilon_n} z_n) +
		d^{x_n}_2(\widehat{\mathfrak{D}}^{x_n}_{\delta/\varepsilon_n} z_n,
			\mathfrak{D}^{x_n}_{\delta/\varepsilon_n} y_n) \bigr) \\
	& = C_1 \Bigl(
		\frac {\delta}{\varepsilon_n} d^{x_n}_2 (x_n, z_n) +
		d^{x_n}_2(\widehat{\mathfrak{D}}^{x_n}_{\delta/\varepsilon_n} z_n,
			\mathfrak{D}^{x_n}_{\delta/\varepsilon_n} y_n) \Bigr) \\
	& = C_2 \Bigl(
		\frac {\delta}{\varepsilon_n} d^{x_n}_\infty (x_n, z_n) +
		d^{x_n}_\infty(\widehat{\mathfrak{D}}^{x_n}_{\delta/\varepsilon_n} z_n,
			\mathfrak{D}^{x_n}_{\delta/\varepsilon_n} y_n) \Bigr) \\
	& = O \Bigl(\frac 1n \Bigr)
	 + o \Bigl( \frac {1}{n} \Bigr)
	 = O \Bigl(\frac 1n \Bigr) \to 0
	 \quad \text{~as~} n \to \infty,
\end{align*}
where $C_1$, $C_2 < \infty$ are bounded,
all $O(\cdot)$ are uniform in $\overline{\Bx}(g_0,r_0)$.

Hence we come to a contradiction with \eqref{Eq:D2contra},
and, therefore, the right inequality in \eqref{comp} is proved.
\end{proof}

\begin{corollary} The quantity $d_2$ is a quasimetric in the sense
of {\rm \cite{Bib:NSW}}, i.~e. the following conditions hold
for the points of the neighborhood $U(g_0)$$:$

\noindent{$1)$} $d_2(u,g)\geq0$, $d_2(u,g)=0$ if and only if $u=g$$;$

\noindent{$2)$} $d_2(u,g)\leq c_1^{-1}c_2d_2(g,u)$, where the constants
$c_1$ and $c_2$ are the ones from the proposition \ref{Prop:MetricsEquality}$;$

\noindent{$3)$} there is a constant $Q_2\geq 1$ such that for
every triple of the points $u$, $w$, $v\in U(g_0)$ we have
$$
    d_2(u,v)\leq Q_2(d_2(u,w)+d_2(w,v)),
$$
where $Q_2=c_1^{-1}c_2Q$ and $Q$ is a constant from the generalized
triangle inequality for $d_\infty$$;$

\noindent{$(4)$}  $d_2(u,v)$ is continuous with respect to the first variable.
\end{corollary}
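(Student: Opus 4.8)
The plan is to deduce all four items from the two-sided comparison \eqref{comp} of Proposition~\ref{Prop:MetricsEquality} together with the quasimetric properties of $d_\infty$ recorded in Proposition~\ref{propd1}; only the last item, continuity, will require an independent argument using the coordinate chart \eqref{2kind}. Throughout I write the comparison as $c_1 d_\infty \le d_2 \le c_2 d_\infty$ and note that necessarily $c_1 \le c_2$, so $c_1^{-1}c_2 \ge 1$.

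For item $1)$, non-negativity of $d_2$ is immediate from its definition as a maximum of the non-negative quantities $|a_i|^{1/\deg X_i}$, and the comparison gives $d_2(u,g)=0 \iff d_\infty(u,g)=0 \iff u=g$ by part $1)$ of Proposition~\ref{propd1}. For item $2)$, I would combine the right-hand estimate in \eqref{comp}, the symmetry of $d_\infty$, and the left-hand estimate applied to the reversed pair:
\begin{equation*}
  d_2(u,g) \le c_2 d_\infty(u,g) = c_2 d_\infty(g,u) \le c_1^{-1}c_2\, d_2(g,u).
\end{equation*}
For item $3)$, chaining the right-hand estimate, the generalized triangle inequality for $d_\infty$ with constant $Q$, and the left-hand estimate applied to each summand yields
\begin{equation*}
  d_2(u,v) \le c_2 d_\infty(u,v) \le c_2 Q\bigl(d_\infty(u,w)+d_\infty(w,v)\bigr) \le c_1^{-1}c_2 Q\bigl(d_2(u,w)+d_2(w,v)\bigr),
\end{equation*}
so $Q_2=c_1^{-1}c_2 Q$ works, and $Q_2 \ge 1$ since $Q\ge 1$ and $c_1^{-1}c_2\ge 1$.

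The only item needing a separate argument is item $4)$. Here I would fix the second variable $g$ and use that, by the diffeomorphism property of the second-kind coordinate map \eqref{2kind}, the coordinates $a_i = a_i(u)$ depend continuously (in fact $C^1$) on $u$ throughout $U(g_0)$. Since each function $t \mapsto |t|^{1/\deg X_i}$ is continuous on $\mathbb{R}$---the exponent being positive, it is continuous also at $t=0$---and a maximum of finitely many continuous functions is continuous, the composition $u \mapsto d_2(u,g)=\max_i |a_i(u)|^{1/\deg X_i}$ is continuous in $u$. I expect this continuity to be the main point of the proof, routine but genuinely distinct from the others, since it is the only item that does not follow formally from \eqref{comp} and instead rests on the regularity of the second-kind chart.
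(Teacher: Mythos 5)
Your proposal is correct and follows essentially the same route as the paper: the paper proves item $2)$ by exactly your chain $d_2(u,g)\leq c_2 d_\infty(u,g)=c_2 d_\infty(g,u)\leq c_1^{-1}c_2\,d_2(g,u)$, handles item $3)$ by the same procedure of sandwiching the $d_\infty$ triangle inequality between the two estimates of \eqref{comp}, and settles item $4)$ by appealing to the continuous dependence of ODE solutions on initial data. Your phrasing of item $4)$ via the continuity of the inverse of the second-kind chart \eqref{2kind} is the same fact, since the chart's $C^1$-diffeomorphism property is itself derived in the paper from that ODE dependence.
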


\begin{proof}
Prove for example the second property:
$d_2(u,g)\leq c_2 d_\infty(u,g)=c_2 d_\infty(g,u)\leq c_1^{-1}c_2d_2(g,u)$.
The third property can be proved using the same procedure.
The last property follows from the continuous dependence of solutions of
ODE on the initial data.
\end{proof}

% == Subsection 1.4 ==
\subsection{Special coordinate system of the 2nd kind and Rashevsky--Chow Theorem}
\label{Section:PhiCoord}

The goal of this section is to modify the coordinate system
of the 2nd kind
$$
  (t_1, \dots, t_N) \mapsto \exp (t_N X_N) \circ \dots
    \circ \exp (t_1 X_1) (g)
$$
in the following way. We prove that exponents of nonhorizontal
vector fields $X_k$, $k = \dim H_1+1, \dots, N$, can be replaced
by compositions of exponents of horizontal vector fields
$X_1, \dots, X_{\dim H_1}$ and the resulting mapping still
covers a neighborhood of $g$. For Carnot groups this property
is known as the following statement.

\begin{lemma}[\cite{Bib:Folland-Stein}] \label{Lemma:FSRepresentation}
Let $\mathbb{G} = (\mathbb{R}^N, \cdot)$ be a Carnot group and let
vector fields $Y_1, \dots, Y_n$ be the basis of horizontal subspace
$V_1$ of its Lie algebra. Then every point $v \in \mathbb{G}$ can be
represented as
$$
  v = \prod_{k=1}^L \exp ( a_k Y_{i_k} )(0)
$$
where $1 \leq i_k \leq n$,
$|a_k| \leq c_1 \Vert v \Vert_\infty$, constants $L$ and $c_1$
are independent of $v$.
\end{lemma}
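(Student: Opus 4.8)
The plan is to reduce everything to a fixed compact piece of $\mathbb{G}$ by homogeneity and then obtain local surjectivity of a product of horizontal exponentials by a commutator-word construction driven by the stratification $[V_1,V_k]=V_{k+1}$. Write $\delta_t$ for the homogeneous dilation of the Carnot group $\mathbb{G}$; it is a group automorphism with $\Vert \delta_t v \Vert_\infty = t \Vert v \Vert_\infty$, and since each horizontal generator has degree $1$ one has $(\delta_t)_* Y_i = t Y_i$, so $\delta_t$ conjugates the flow of $Y_i$ into the flow of $t Y_i$. Consequently, if $u = \exp(a_L Y_{i_L}) \circ \dots \circ \exp(a_1 Y_{i_1})(0)$ then $\delta_r u = \exp(r a_L Y_{i_L}) \circ \dots \circ \exp(r a_1 Y_{i_1})(0)$. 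Hence it suffices to prove the representation for $u$ in the fixed compact set $\{ \Vert u \Vert_\infty = 1 \}$ with $L$ and $\max_k |a_k|$ bounded by absolute constants: given an arbitrary $v \neq 0$, set $r = \Vert v \Vert_\infty$ and $u = \delta_{1/r} v$, represent $u$, and apply $\delta_r$ to obtain a representation of $v$ with $|r a_k| \leq c_1 \Vert v \Vert_\infty$.

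For the local statement I would first build, for every element of a spanning set of iterated brackets, a group word in the one-parameter subgroups $s \mapsto \exp(s Y_i)(0)$ whose value realizes the prescribed commutator. Because $V_{k+1} = [V_1, V_k]$, each layer $V_j$ admits a basis of brackets $X_I = [Y_{i_1}, [Y_{i_2}, \dots, Y_{i_j}]]$ of length $j$; to such an $X_I$ associate the nested commutator word $w_I(t)$ in the factors $\exp(\pm t Y_{i_\ell})$, whose Baker--Campbell--Hausdorff expansion (a finite sum, since $\mathbb{G}$ is nilpotent) has principal part $\exp(t^{\deg X_I} X_I)(0)$ and correction terms lying strictly in higher layers. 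Concatenating the words $w_I$ over a full adapted basis produces a map $\Psi(t_1, \dots, t_N)$ that is a finite composition of horizontal exponentials; it remains to show that $\Psi$ covers a neighborhood of the identity.

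To see this I would reparametrize by $t_I = \sign(\tau_I)\, |\tau_I|^{1/\deg X_I}$, so that the resulting map $\widetilde\Psi(\tau)$ has principal part $\exp(\sum_I \tau_I X_I)(0)$, i.e.\ coincides to leading homogeneous order with the exponential coordinates of the first kind, which form a diffeomorphism near $0$. The deviation $\widetilde\Psi(\tau) - \exp(\sum_I \tau_I X_I)(0)$ consists of terms of strictly higher homogeneous degree, so a quasihomogeneous (graded) inverse-function argument, or a topological degree computation against the principal part, shows that $\widetilde\Psi$ is a homeomorphism of a neighborhood of $0$ onto a neighborhood of the identity. Unwinding $\widetilde\Psi$ back into the horizontal factors of the $w_I$ yields a representation of every $u$ near $0$, hence, after a single dilation, of every $u$ with $\Vert u \Vert_\infty = 1$, as a product of a bounded number $L$ of horizontal exponentials with uniformly bounded coefficients.

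The main obstacle is precisely the surjectivity in the last paragraph: the reparametrization $\tau \mapsto \sign(\tau) |\tau|^{1/\deg}$ destroys $C^1$-smoothness, so the classical inverse function theorem does not apply directly, and one must control the higher-layer BCH corrections by homogeneous degree and invoke a graded or quasihomogeneous version of the implicit function theorem (or a degree argument) to conclude that $\Psi$ covers a full neighborhood with a word of \emph{bounded} length. The bookkeeping of homogeneous weights, together with the repeated use of $[V_1, V_k] = V_{k+1}$ to guarantee that the principal parts of the $w_I$ span all of $T_0 \mathbb{G}$, is where the real work lies; the coefficient bound $|a_k| \leq c_1 \Vert v \Vert_\infty$ then follows automatically from the dilation reduction.
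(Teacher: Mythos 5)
The paper offers no proof of this lemma at all --- it is quoted from Folland--Stein --- so the only in-paper comparison point is the proof of Theorem~\ref{Theorem:CoordSys}, which runs your same homotopy/degree scheme one level up (on the Carnot--Carath\'eodory space, taking the group-level lemma as input). Your outline (dilation reduction to $\Vert u\Vert_\infty=1$, commutator words, degree argument) is the right skeleton, but one step fails as literally written: the claim that a single nested commutator word $w_I(t)$ has BCH principal part $\exp\bigl(t^{\deg X_I}X_I\bigr)(0)$ for \emph{all real} $t$. For even $\deg X_I$ this is impossible: replacing $t$ by $-t$ in, say, $c(t)=\exp(tX)\exp(tY)\exp(-tX)\exp(-tY)$ reproduces the \emph{same} principal term $t^2[X,Y]$, so with your substitution $t_I=\sign(\tau_I)|\tau_I|^{1/\deg X_I}$ the even-layer components of the principal part read $|\tau_I|$ and never take negative values; the principal map is then not onto any neighborhood of the identity (its degree is zero) and the homotopy computation collapses. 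The repair is standard and is precisely what the paper builds into \eqref{Eq:PhiHatLine} and \eqref{Eq:PushPhi}: for $t<0$ one must use the order-reversed (inverse) word, whose logarithm begins with $-|t|^{\deg X_I}X_I$; the resulting $w_I$ is only continuous at $t=0$, which your topological argument tolerates.

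A second point needs care: ``correction terms of strictly higher homogeneous degree'' is false in the grading you invoke. Since $w_I(\lambda t)=\delta_\lambda w_I(t)$ exactly, the map $\widetilde\Psi$ commutes with the anisotropic dilations, so its deviation from $\exp\bigl(\sum_I\tau_I X_I\bigr)(0)$ is quasihomogeneous of the \emph{same} degree as the principal part --- a graded inverse-function or degree argument ``against the principal part'' in that grading has nothing small to absorb. Two correct substitutes: (i) the \emph{isotropic} estimate --- every correction monomial has the form $\prod_s|\tau_{I_s}|^{m_s/\deg X_{I_s}}$ with $\sum_s m_s$ equal to the target layer and each $m_s\geq\deg X_{I_s}$, so its Euclidean degree is at least $\min\{1+\tfrac 1M,\,2\}>1$; hence in coordinates of the first kind $\widetilde\Psi(\tau)=\tau+o(|\tau|)$, and Brouwer degree against the identity covers a Euclidean ball around $0$; or (ii), more elementary, \emph{triangularity} --- the layer-$j$ component of $\log\widetilde\Psi(\tau)$ equals $\sum_{\deg X_I=j}\tau_I X_I+R_j(\tau)$ where $R_j$ depends only on variables of layers $<j$ (a single word's non-principal terms lie in layers strictly above its own degree, and a BCH cross-term in layer $j$ involves at least two factors each of weight at most $j-1$), so one solves for $\tau$ inductively layer by layer, obtaining surjectivity with no degree theory at all. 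With the sign fix and either (i) or (ii) in place of your quasihomogeneous step, your argument becomes a complete and self-contained proof; the dilation reduction and the resulting bounds $|a_k|\leq c_1\Vert v\Vert_\infty$ with $L$, $c_1$ independent of $v$ are fine exactly as you wrote them.
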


\begin{lemma}
Fix $g \in \mathcal{M}$.
There exists mapping
$\widehat \Phi_g : B_e(0, \varepsilon) \to \mathcal{G}^g$
defined as
\begin{multline}
  \widehat \Phi_g :
  (t_1, \dots, t_N) \mapsto
  \widehat \Phi_N (t_N) \circ \dots \circ
  \widehat \Phi_{\dim H_1 + 1} (t_{\dim H_1 + 1}) \\
  \circ \exp ( \widehat X^g_{\dim H_1}) \circ \dots \circ
  \exp ( \widehat X^g_1 ) (g) \label{Eq:PhiHatSystem}
\end{multline}
that is a homeomorphism of a ball $B_e(0, \varepsilon)$ onto the
neighborhood $V(g) \subset \mathcal{G}^g$ of a point $g$ with the
mappings $\widehat \Phi_k$ enjoying
$$
  \widehat \Phi_k (t)(\cdot) =
  \begin{cases}
    \exp (a_{L,k} t \widehat X^g_{L,k}) \circ \dots \circ
    \exp (a_{1,k} t \widehat X^g_{1,k}) (\cdot), & t \geq 0, \\
    \exp (a_{1,k} t \widehat X^g_{1,k}) \circ \dots \circ
    \exp (a_{L,k} t \widehat X^g_{L,k}) (\cdot), & t < 0, \\
  \end{cases}
$$
where $|a_{i,k}| \leq c_1$
for all $k = \dim H_1 + 1,\dots, N$, $i = 1,\dots,L$,
every $\widehat X^g_{i,k}$ is from
$\{ \widehat X^g_1, \dots, \widehat X^g_{\dim H_1} \}$.
\end{lemma}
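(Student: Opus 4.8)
The plan is to reduce the whole construction to a single application of the Folland--Stein representation (Lemma~\ref{Lemma:FSRepresentation}) together with the dilation structure of the local Carnot group $\mathcal{G}^g$. Write $h=\dim H_1$. Through the local isomorphism $\theta_g$ the group $\mathcal{G}^g$ is modelled on the genuine Carnot group $\mathbb{G}_g\mathcal{M}$, in which Lemma~\ref{Lemma:FSRepresentation} applies verbatim. For each non-horizontal index $k\in\{h+1,\dots,N\}$ I would apply Lemma~\ref{Lemma:FSRepresentation} to the single element $v_k=\exp(\widehat X^g_k)(g)$, obtaining horizontal generators $\widehat X^g_{1,k},\dots,\widehat X^g_{L,k}\in\{\widehat X^g_1,\dots,\widehat X^g_h\}$ and coefficients $a_{i,k}$ with $|a_{i,k}|\le c_1\Vert v_k\Vert_\infty=c_1$ (since $\Vert v_k\Vert_\infty=d^g_\infty(g,v_k)=1$), so that $v_k=\exp(a_{L,k}\widehat X^g_{L,k})\circ\dots\circ\exp(a_{1,k}\widehat X^g_{1,k})(g)$. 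This is exactly the value $\widehat\Phi_k(1)(g)$.

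The next step is to generate all remaining values $\widehat\Phi_k(t)$ from $v_k$ by dilation. Because every $\widehat X^g_{i,k}$ is horizontal, hence of degree $1$, the map $\delta^g_t$ scales each coefficient $a_{i,k}$ by exactly $t$; since $\delta^g_t$ is moreover a group automorphism, it distributes over the composition, so for $t\ge 0$
\[
  \delta^g_t v_k
  =\exp(a_{L,k}\,t\,\widehat X^g_{L,k})\circ\dots\circ\exp(a_{1,k}\,t\,\widehat X^g_{1,k})(g)
  =\widehat\Phi_k(t)(g).
\]
On the other hand $\delta^g_t v_k=\exp\bigl(t^{\deg X_k}\widehat X^g_k\bigr)(g)$ straight from the definition of $\delta^g_t$, and each flow $x\mapsto\exp(s\widehat X^g_i)(x)$ is the right translation by $\exp(s\widehat X^g_i)(g)$. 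Consequently $\widehat\Phi_k(t)$ coincides, as an operator, with $\exp\bigl(t^{\deg X_k}\widehat X^g_k\bigr)$. For $t<0$ I would invoke the extension $\delta^g_t x=\delta^g_{|t|}(x^{-1})$: since $v_k^{-1}=\exp(-\widehat X^g_k)(g)$ and inversion reverses the product of exponentials while negating the generators, applying $\delta^g_{|t|}$ reproduces precisely the reversed-order formula in the statement and yields $\widehat\Phi_k(t)=\exp\bigl(\sign(t)\,|t|^{\deg X_k}\widehat X^g_k\bigr)$.

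With this operator identity the homeomorphism claim becomes a change of variables. Let $\Psi(s_1,\dots,s_N)=\exp(s_N\widehat X^g_N)\circ\dots\circ\exp(s_1\widehat X^g_1)(g)$ be the standard coordinate map of the $2$nd kind on $\mathcal{G}^g$, a homeomorphism (indeed $C^1$-diffeomorphism) of a ball onto a neighborhood of $g$, and set $\sigma_k(t)=\sign(t)\,|t|^{\deg X_k}$ for $k>h$ and $\sigma_k(t)=t$ for $k\le h$. The previous paragraph gives $\widehat\Phi_g=\Psi\circ R$, where $R(t_1,\dots,t_N)=(\sigma_1(t_1),\dots,\sigma_N(t_N))$. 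Each $\sigma_k$ is an increasing homeomorphism of $\mathbb{R}$ fixing $0$, so $R$ is a homeomorphism of $\mathbb{R}^N$; composing with the diffeomorphism $\Psi$ shows that $\widehat\Phi_g$ is a homeomorphism of a (possibly smaller) ball $B_e(0,\varepsilon)$ onto a neighborhood $V(g)\subset\mathcal{G}^g$.

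The main obstacle is bookkeeping of domains rather than any deep difficulty: all of the above lives in a \emph{local} Carnot group, where multiplication, inversion and the dilations $\delta^g_t$ are only partially defined. Thus the key technical point is to choose $\varepsilon$ small enough, uniformly via the bound $|a_{i,k}|\le c_1$, that every composition appearing in the argument stays inside the chart on which $\theta_g$ is a local isomorphism and on which the automorphism and right-translation identities are valid. Once it is checked that $\delta^g_t$ genuinely distributes over the finite composition and that the $t<0$ inversion yields exactly the reversed product, the identity $\widehat\Phi_k(t)=\exp\bigl(\sigma_k(t)\widehat X^g_k\bigr)$ holds and the homeomorphism property follows at once.
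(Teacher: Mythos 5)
Your proof is correct and takes essentially the same route as the paper's: both apply Lemma~\ref{Lemma:FSRepresentation} to each $\exp(\widehat X^g_k)$ in the nilpotent tangent cone (where $\Vert v_k\Vert_\infty=1$ gives $|a_{i,k}|\leq c_1$), propagate to all $t$ via the dilations --- the automorphism property for $t\geq 0$ and the inversion extension $\delta^g_t x=\delta^g_{|t|}(x^{-1})$ producing the reversed product for $t<0$ --- then transfer by $\theta_g$ and translation invariance, and conclude the homeomorphism by factoring $\widehat\Phi_g$ through the coordinates of the 2nd kind composed with the coordinatewise power maps. Your explicit $\sigma_k(t)=\sign(t)\,|t|^{\deg X_k}$ is in fact a slightly more careful rendering of the paper's $t_k^{\deg X_k}$, and your closing remark on shrinking $\varepsilon$ matches the paper's domain bookkeeping.
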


\begin{proof}
Consider coordinate system of the 2nd kind on the
nilpotent tangent cone~$\mathbb{G}_g \mathcal{M}$.
$$
 \Theta_g (t_1, \dots, t_N)
 = \exp ( t_N (\widehat X^g_N)' ) \circ \dots
   \circ \exp ( t_1 (\widehat X^g_1)' )(0).
$$
The mapping $\Theta_g$ is a diffeomorphism of $\mathbb{R}^N$.
For every nonhorizontal vector field $(\widehat X^g_k)'$ fix
decomposition given by Lemma~\ref{Lemma:FSRepresentation}
$$
 \exp ( (\widehat X^g_k)' )(0)
 = \exp ( a_{L,k} (\widehat X^g_{L,k})' ) \circ \dots
 \circ \exp ( a_{1,k} (\widehat X^g_{1,k})' ) (0).
$$
Here $|a_{i,k}| < c_1$ for all $i=1,\dots,L$,
$k = \dim H_1 + 1, \dots, N$, and every
$(\widehat X^g_{i,k})'$ is from the set
$\{ (\widehat X^g_1)', \dots, (\widehat X^g_{\dim H_1})' \}$.
Applying dilation $\delta^g$ to this decomposition we obtain
the following representation
\begin{align}
  \delta^g_t \exp ( (\widehat X^g_k)' )(0)
  & = \exp ( t^{\deg X_k} (\widehat X^g_k)' )(0) \notag \\
  & = \exp ( a_{L,k} t (\widehat X^g_{L,k})' ) \circ \dots
    \circ \exp ( a_{1,k} t (\widehat X^g_{1,k})' ) (0),
    & t \geq 0, \notag \\
  \delta^g_t \exp ( (\widehat X^g_k)' )(0)
  & = \exp ( - |t|^{\deg X_k} (\widehat X^g_k)' )(0) \notag \\
  & = \exp ( a_{1,k} t (\widehat X^g_{1,k})' ) \circ \dots
    \circ \exp ( a_{L,k} t (\widehat X^g_{L,k})' ) (0),
    &  t < 0. \label{Eq:PhiHatLine}
\end{align}
Since vector fields $(\widehat X^g_k)'$ are left-invariant,
representation~\eqref{Eq:PhiHatLine} holds also if we replace
$0$ by arbitrary $x \in \mathbb{G}_g \mathbb{M}$.

Next, we push-forward representation~\eqref{Eq:PhiHatLine}
using local group isomorphism~$\theta_g$.
Define mappings
$\widehat \Phi_k :
[-\varepsilon, \varepsilon] \times
\mathrm{Box}(g, \varepsilon)
\to \mathcal{G}^g$ as
\begin{equation} 
  \widehat \Phi_k (t)(w) =
  \begin{cases}
   \exp ( a_{L,k} t \widehat X^g_{L,k} ) \circ \dots
    \circ \exp ( a_{1,k} t \widehat X^g_{1,k} ) (w), & t \geq 0, \\
   \exp ( a_{1,k} t \widehat X^g_{1,k} ) \circ \dots
    \circ \exp ( a_{L,k} t \widehat X^g_{L,k} ) (w), & t < 0
  \end{cases}
  \label{Eq:PushPhi}
\end{equation}
where, by definition,
$$
  \exp (a \widehat X^g_i) \circ \exp (b \widehat X^g_j) =
  \theta_g \circ \exp (a (\widehat X^g_i)') \circ
  \exp (b (\widehat X^g_j)') \circ \theta_g^{-1}
$$
and $\varepsilon > 0$ is small enough that~\eqref{Eq:PushPhi} makes
sense for all $k = \dim H_1 + 1, \dots, N$,
$t \in [-\varepsilon, \varepsilon]$ and
$w \in \mathrm{Box}(g, \varepsilon)$.

Consider a mapping $\widehat \Phi_g$ defined as in
\eqref{Eq:PhiHatSystem}.
Since, by construction,
$$
  \widehat \Phi_g(t_1, \dots, t_N) =
  \theta_g \circ \Theta_g (t_1^{\deg X_1}, \dots, t_N^{\deg X_N}),
$$
the mapping $\widehat \Phi_g$ is a homeomorphism of a
ball $B_e(0,\varepsilon) \subset \mathbb{R}^N$ onto the neighborhood
$V(g) \subset \mathcal{M} \cap \mathcal{G}^g$.
\end{proof}

For every point $g \in U(g_0)$ define mappings
$\Phi_k: [-\varepsilon, \varepsilon] \to \mathcal{M}$
as
\begin{equation} 
  \Phi_k(t)(\cdot) =
  \begin{cases}
  \exp ( a_{L,k} t X_{L,k} ) \circ \dots
    \circ \exp ( a_{1,k} t X_{1,k} ) (\cdot), & t \geq 0,  \\
  \exp ( a_{1,k} t X_{1,k} ) \circ \dots
    \circ \exp ( a_{L,k} t X_{L,k} ) (\cdot), & t < 0,
  \end{cases}
  \label{Eq:CoordPhiLine}
\end{equation}
where coefficients $a_{i,k}$, $i = 1,\dots,L$,
$k = \dim H_1+1, \dots, N$, are taken from
the representation \eqref{Eq:PhiHatLine}.
Define also a mapping
$\Phi_g : B_e(0, \varepsilon) \to \mathcal{M}$ as
\begin{multline} \label{Eq:CoordPhiSystem}
  \Phi_g : (t_1, \dots, t_N) \mapsto \Phi_N(t_N) \circ \dots
    \circ \Phi_{\dim H_1 + 1}(t_{\dim H_1 + 1}) \\
  \circ \exp (t_{\dim H_1} X_{\dim H_1}) \circ \dots \circ \exp(t_1 X_1)(g).
\end{multline}
Next, we prove that $\Phi_g$ is the desired mapping, i.~e.
there is a neighborhood $V(g)$ such that
$V(g) \subset \Phi(B_e(0, \varepsilon))$.

\begin{theorem} \label{Theorem:CoordSys}
  Fix the point $g_0 \in \mathcal{M}$. Let
  $X_1, \dots, X_{\dim H_1}$ be a basis in $H_1$. Then there is a
  neighborhood $U(g_0)$ such that for every point $g\in U(g_0)$
  an element $v\in U(g_0)$ can be represented as
  \begin{equation} \label{Eq:HorizPathForm}
    v = \exp(a_L X_{j_L}) \circ \dots 
    \circ \exp(a_2 X_{j_2}) \circ \exp(a_1 X_{j_1}) (g),
  \end{equation}
  where $1 \leq j_i \leq \dim H_1$, $i=1,\dots,L$, $L\in \mathbb N$,
  $|a_i|\leq c_2 \, d_\infty(g,v)$, constants $L$ and $c_2$ are independent
  of $g$ and $v$.
\end{theorem}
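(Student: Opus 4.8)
The representation \eqref{Eq:HorizPathForm} is precisely what the map $\Phi_g$ of \eqref{Eq:CoordPhiSystem} produces once one writes out the definitions \eqref{Eq:CoordPhiLine} of the blocks $\Phi_k$: every factor is an exponential of one of the horizontal fields $X_1,\dots,X_{\dim H_1}$, and the parameters are $t_1,\dots,t_{\dim H_1}$ together with the products $a_{i,k}t_k$. Thus the theorem splits into two assertions: (i) $\Phi_g$ maps a Euclidean ball $B_e(0,\varepsilon)$ onto a whole $d_\infty$-neighborhood of $g$ (this is the Rashevsky--Chow connectivity and is the heart of the matter), and (ii) the parameters needed to reach a given $v$ are controlled by $d_\infty(g,v)$. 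Since $|a_{i,k}|\le c_1$ by construction, (ii) reduces to bounding $\max_k|t_k|$ by a fixed multiple of $d_\infty(g,v)$, and the number of factors is the fixed integer $L=\dim H_1+(N-\dim H_1)\ell$, where $\ell$ is the constant of Lemma~\ref{Lemma:FSRepresentation}.

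The plan for (i) is a blow-up plus topological degree argument that transfers the homeomorphism property of the nilpotent model $\widehat\Phi_g$ from \eqref{Eq:PhiHatSystem} to $\Phi_g$. Fix $g\in U(g_0)$, and for small $\varepsilon>0$ define rescaled maps on $\overline{B_e(0,1)}\subset\mathbb R^N$ by $\phi_\varepsilon(s)=\delta^g_{1/\varepsilon}\Phi_g(\varepsilon s)$ and $\widehat\phi_\varepsilon(s)=\delta^g_{1/\varepsilon}\widehat\Phi_g(\varepsilon s)$. Because $\widehat\Phi_g(\varepsilon s)=\delta^g_\varepsilon\widehat\Phi_g(s)$ by the homogeneity built into \eqref{Eq:PhiHatLine}, the map $\widehat\phi_\varepsilon=\widehat\Phi_g$ is independent of $\varepsilon$ and is a homeomorphism of $\overline{B_e(0,1)}$ onto a compact $d^g_\infty$-neighborhood of $g=\widehat\Phi_g(0)$; in particular $\widehat\Phi_g(\partial B_e(0,1))$ lies at $d^g_\infty$-distance at least some $\rho>0$ from $g$, and $\rho$ is bounded below uniformly in $g$ by Remark~\ref{Rem:MetricsEquality} together with the boundedness of the structure constants on compacta.

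The key estimate is that $\phi_\varepsilon\to\widehat\Phi_g$ uniformly on $\overline{B_e(0,1)}$ (and uniformly in $g$) as $\varepsilon\to0$. Indeed, $\Phi_g(\varepsilon s)$ and $\widehat\Phi_g(\varepsilon s)$ are two broken horizontal paths issuing from $g$ with the same fixed number $L$ of steps and the same step parameters of size $O(\varepsilon)$, differing only in that one uses the fields $X_j$ and the other the nilpotent fields $\widehat X^g_j$; Theorem~\ref{Th:PathsEstimate} therefore gives $d^g_\infty(\Phi_g(\varepsilon s),\widehat\Phi_g(\varepsilon s))=o(\varepsilon)$ uniformly, and the $\delta^g_t$-homogeneity of $d^g_\infty$ yields $d^g_\infty(\phi_\varepsilon(s),\widehat\Phi_g(s))=o(1)$. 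Hence for $\varepsilon$ small $\phi_\varepsilon$ stays within $\rho/2$ of $\widehat\Phi_g$ everywhere, so on $\partial B_e(0,1)$ it keeps $d^g_\infty$-distance $\ge\rho/2$ from $g$; reading the linear homotopy $\tau\mapsto(1-\tau)\widehat\Phi_g+\tau\phi_\varepsilon$ in the local chart, it never reaches $g$ on the boundary, whence $\deg(\phi_\varepsilon,B_e(0,1),g)=\deg(\widehat\Phi_g,B_e(0,1),g)=\pm1\ne0$. Thus $\phi_\varepsilon$ covers $\{u:d^g_\infty(g,u)<\rho/2\}$, and undoing the dilation gives $\Phi_g(B_e(0,\varepsilon))\supset\{u:d^g_\infty(g,u)<\tfrac\rho2\varepsilon\}$; by Theorem~\ref{Theorem:MetricsDeviation} and Proposition~\ref{Prop:MetricsEquality} this contains a genuine ball $\Bx(g,c\varepsilon)$ with $c>0$ uniform in $g$, proving (i).

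For (ii), given $v\in U(g_0)$ put $\rho_0=d_\infty(g,v)$ and choose $\varepsilon$ comparable to $\rho_0$, namely the least $\varepsilon$ with $\rho_0\le c\varepsilon$. By (i) there is $t\in B_e(0,\varepsilon)$ with $\Phi_g(t)=v$, whence $\max_k|t_k|<\varepsilon\le C\,d_\infty(g,v)$ with $C=c^{-1}$. The coefficients in \eqref{Eq:HorizPathForm} are the $t_i$ and the products $a_{i,k}t_k$ with $|a_{i,k}|\le c_1$, so all of them are bounded by $c_2\,d_\infty(g,v)$ with $c_2=\max\{1,c_1\}C$, while the number of factors is the fixed $L$ above. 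The main obstacle is the blow-up step: one must ensure that $\delta^g_{1/\varepsilon}$ is defined on $\Phi_g(\varepsilon s)$ and that every estimate---the path comparison of Theorem~\ref{Th:PathsEstimate}, the lower bound on $\rho$, and the metric comparison of Theorem~\ref{Theorem:MetricsDeviation}---holds uniformly in $g\in U(g_0)$, so that the resulting constants $c$, $c_2$ and $L$ are genuinely independent of $g$ and $v$.
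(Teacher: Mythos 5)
Your proposal is correct and takes essentially the same route as the paper: both reduce the statement to showing that $\Phi_g$ covers a $d_\infty$-ball of radius comparable to $\varepsilon$, by using Theorem~\ref{Th:PathsEstimate} to exhibit $\Phi_g$ as a uniform $o(\varepsilon)$-perturbation of the model homeomorphism $\widehat\Phi_g$ and then invoking invariance of topological degree, after which the parameter bound $|a_i|\leq c_2\,d_\infty(g,v)$ follows exactly as you describe. The only difference is in the implementation of the degree step --- the paper homotopes $\psi=\Phi_g\circ\widehat\Phi_g^{-1}$ to the identity at a fixed small scale via the dilation homotopy $\psi_t(v)=\delta^v_{1-t}\psi(v)$, while you blow up by $\delta^g_{1/\varepsilon}$ and join the rescaled map to the $\varepsilon$-independent model $\widehat\Phi_g$ by a linear homotopy in the chart --- and these are equivalent devices resting on the same uniform estimate.
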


\begin{proof}
Fix $g_0 \in \mathcal{M}$.
Let $\widehat \Phi_k(t)(\cdot)$ and $\Phi_k(t)(\cdot)$ be defined as
in \eqref{Eq:PhiHatLine} and \eqref{Eq:CoordPhiLine}.
By Theorem~\ref{Th:PathsEstimate} we have
$$
  d_\infty \bigl( \widehat \Phi_k(t)(w),
  \Phi_k(t)(w) \bigr) = o(t)
  \quad \text{as } t \to 0
$$
where $o(t)$ is uniform with respect to $g$, $w$
in a compact neighborhood $U(g_0)$.

Let $B_e(0, r)$ be an Euclidean ball in $\mathbb{R}^N$
and mappings $\widehat \Phi_g$ and $\Phi_g : B_e(0, r) \to \mathcal{M}$
be defined as in \eqref{Eq:PhiHatSystem} and \eqref{Eq:CoordPhiSystem}.
Observe that both mappings are continuous and that
$d_\infty ( \Phi_g(x), \widehat \Phi_g(x)) = o(r)$ as $r \to 0$
where $o(r)$ is uniform in $g \in U(g_0)$ and $x \in B_e(0, r)$.
Moreover, $\widehat \Phi_g$ is a homeomorphism of $B_e(0, r)$ onto a
neighborhood $V(g) \in \mathcal{M} \cap \mathcal{G}^g$.

Define $\psi = \Phi_g \circ \widehat \Phi_g^{-1}$.
The mapping $\psi : V(g) \to \mathcal{M}$
is continuous and $d_{\infty}(v, \psi(v)) = o(d_\infty(g,v))$
as $v \to g$ where $o(\cdot)$ is uniform in $g, v \in U(g_0)$.
Choose $\varepsilon_0 > 0$ such that
$d_\infty(v, \psi(v)) \leq \frac{\varepsilon}{2Q}$ for every
$v \in \overline{\mathrm{Box}(g,\varepsilon)}$,
$0 < \varepsilon \leq \varepsilon_0$
and $g \in U(g_0)$,
where $Q \geq 1$ is a constant from the generalized triangle
inequality for $d_\infty$.
Next, we prove that $\psi(\mathrm{Box}(g,\varepsilon))$ is a
neighborhood of $g$.

Consider a homotopy $\psi_t (v) = \delta^v_{1-t} \psi(v)$,
$t \in [0,1]$. It is clear that $\psi_0(v) = \psi(v)$ and
$\psi_1(v) = v$. Fix a point
$w \in \mathrm{Box}(g, \frac{\varepsilon}{2Q})$.
Then for every $v \in \partial \mathrm{Box}(g, \varepsilon)$ we have
$$
  \varepsilon = d_\infty(g, v) \leq Q \bigl(
  d_\infty(g, w) + d_\infty(w, v) \bigr)
  < \frac{\varepsilon}{2} + Q d_\infty(w, v).
$$
Hence, $d_\infty(w, v) > \frac{\varepsilon}{2Q}$. On the other
side, for all $v \in \partial \mathrm{Box}(g, \varepsilon)$ we also have
\begin{align*}
  d_\infty( \psi_t(v), v)
  & = d_\infty( \delta^v_{1-t} \psi(v), v ) \\
  & = d^v_\infty( \delta^v_{1-t} \psi(v), v )
    = (1-t) d^v_\infty( \psi(v), v ) \\
  & \leq d^v_\infty( \psi(v), v)
    = d_\infty( \psi(v), v)
    \leq \frac {\varepsilon} {2Q}.
\end{align*}
Consequently,
$w \not\in \psi \bigl(
\partial \mathrm{Box} (g, \varepsilon) \bigr)$
for all $t \in [0, 1]$.
Therefore, the topological degree of $\psi_t$ at $w$ is invariant
for all $t \in [0, 1]$. Since
$$
  \deg (w, \mathrm{Box}(g,\varepsilon), \psi)
  = \deg (w, \mathrm{Box}(g,\varepsilon), \psi_1)
  = \deg (w, \mathrm{Box}(g,\varepsilon), \psi_0) = 1,
$$
we conclude $w \in \psi \bigl( \mathrm{Box}(g,\varepsilon) \bigr)$.
In other words
$\mathrm{Box}(g, \frac{\varepsilon}{2Q}) \subset
\Phi_g ( \mathrm{Box}_e(0, \varepsilon) )$, where
$\mathrm{Box}_e(0, \varepsilon) =
\{ x \in \mathbb{R}^N : |x_i| < \varepsilon, i=1,\dots,N \}$ is an Euclidean cube.

Let $U(g_0)$ be a neighborhood of $g_0$ small enough that
$$
    U(g_0)\subset\bigcap\limits_{g \in U(g_0)}
    \mathrm{Box} (g, \tfrac{\varepsilon_0}{2Q}).
$$
Let $\varepsilon = d_\infty(g,v)$ where $g, v \in U(g_0)$.
Then there exists a tuple of numbers
$(t_1, \dots, t_N)$ such that $|t_i| < 2Q\varepsilon$ and
$v = \Phi_g(t_1, \dots, t_N)$.
This completes the proof.
\end{proof}

An absolutely continuous curve $\gamma : [0,T] \to \mathcal{M}$
is said to be {\it horizontal} if $\dot\gamma(t) \in H_{\gamma(t)} \mathcal{M}$
for almost all $t \in [0,T]$.

As an immediate consequence of Theorem~\ref{Theorem:CoordSys} we obtain
the following generalization of
Rashevsky--Chow theorem \cite{Bib:Rashevsky, Bib:Chow, Bib:Karm-Vod}.
For $C^1$-smooth fields $X_1,\ldots,X_N$
this statement is new.

\begin{theorem}
\label{Th:Rashevsky}
$1)$ Let $g \in \mathcal{M}$. There exists a neighborhood~$U$ of a point~$g$
such that every pair of points $u$, $v \in U$ in a Carnot--Carath\'eodory
space~$\mathcal{M}$ can be joined by an absolutely continuous
horizontal curve~$\gamma$ constituted of at most~$L$ segments of integral
lines of basic horizontal fields where~$L$ is independent of the choice of
points $x$, $y \in U$.

$2)$ Every pair of points $u$, $v$ in a connected Carnot--Carath\'eodory
space~$\mathcal{M}$ can be joined by an absolutely continuous
horizontal curve~$\gamma$ constituted of finite number of segments of
integral lines of basic horizontal fields.
\end{theorem}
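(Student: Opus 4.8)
The plan is to read off both statements as essentially immediate consequences of Theorem~\ref{Theorem:CoordSys}, which already contains the substantive content: a representation of an arbitrary nearby point as a finite product of flows along \emph{horizontal} basic fields with a uniformly bounded number of factors.

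For part 1) I would take $U$ to be the neighborhood $U(g_0)$ provided by Theorem~\ref{Theorem:CoordSys} and apply that theorem with $g = u$. This gives, for any $v \in U$, a representation
\[
  v = \exp(a_L X_{j_L}) \circ \dots \circ \exp(a_1 X_{j_1})(u),
\]
in which each $X_{j_i}$ is a basic horizontal field ($1 \le j_i \le \dim H_1$) and $|a_i| \le c_2\, d_\infty(u,v)$. Setting $p_0 = u$ and $p_i = \exp(a_i X_{j_i})(p_{i-1})$, I would parametrize the $i$-th piece on $[i-1,i]$ by $t \mapsto \exp\bigl((t-i+1)a_i X_{j_i}\bigr)(p_{i-1})$; its velocity equals $a_i X_{j_i}(\gamma(t)) \in H\mathcal{M}$, so the piece is a $C^1$ integral-line segment of a horizontal field, and the concatenation $\gamma$ is an absolutely continuous horizontal curve from $u$ to $v$ made of at most $L$ segments. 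Since $L$ is the constant of Theorem~\ref{Theorem:CoordSys}, it does not depend on $u$, $v$, which is exactly assertion 1).

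For part 2) I would upgrade the local statement to the global one by the standard open--closed argument. Fixing $u \in \mathcal{M}$, let $R(u)$ be the set of points joinable to $u$ by a horizontal curve consisting of finitely many integral-line segments. Then $R(u)$ is nonempty, as it contains $u$; it is open, because part 1) supplies, around any $v \in R(u)$, a neighborhood all of whose points connect to $v$ by finitely many segments, which may be concatenated with the existing curve from $u$ to $v$; and its complement is open by the same reasoning, since a point of $R(u)$ lying in the neighborhood of some $w \notin R(u)$ would make $w$ reachable. Connectedness of $\mathcal{M}$ then forces $R(u) = \mathcal{M}$, giving 2).

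Because the real work resides in Theorem~\ref{Theorem:CoordSys}, no deep obstacle remains; the only points demanding care are checking that the finite product of exponentials genuinely assembles into a piecewise-$C^1$ (hence absolutely continuous) \emph{horizontal} curve with segment count bounded by $L$, and, in part 2), phrasing the reachable set so that both it and its complement are open and connectedness can be invoked.
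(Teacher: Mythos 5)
Your proposal is correct and matches the paper's route exactly: the paper presents Theorem~\ref{Th:Rashevsky} as an immediate consequence of Theorem~\ref{Theorem:CoordSys}, and your argument simply makes explicit the two routine steps left implicit there --- concatenating the exponential factors of the representation \eqref{Eq:HorizPathForm} into a piecewise-$C^1$ horizontal curve with at most $L$ segments, and globalizing via the standard open--closed reachability argument on the connected manifold. Both steps are carried out correctly, including the uniformity of $L$ in part 1) and the symmetry of joinability needed for the complement of $R(u)$ to be open in part 2).
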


\subsection{Carnot--Carath\'eodory metric and Ball-Box Theorem}

The {\it Carnot--Carath\'eodory distance} between two points $x$,
$y \in \mathcal{M}$  is defined as
\begin{multline*}
	d_{cc}(x,y) = \inf \{ T > 0 : \text{there exists a horizontal path~}
	\gamma : [0,T] \to \mathcal{M}, \\
	\gamma(0) = x, \gamma(T) = y, |\dot\gamma(t)| \leq 1 \}.
\end{multline*}
Theorem~\ref{Th:Rashevsky} guarantees that $d_{cc}(x,y) < \infty$
for all $x$, $y \in \mathcal{M}$.
An open ball in Carnot--Carath\'eodory metric with center in $x$ and
radius $r$ we denote as $B_{cc}(x,r)$.

The following statement is called the local approximation theorem.
It was formulated in \cite[p.~135]{Bib:Gromov} for
``sufficiently smooth vector fields''.
It was proved in \cite{Bib:VK-Approx} for $C^{1,\alpha}$-smooth vector
fields but the same arguments work for the case of $C^1$-smooth vector
fields since they are based on the property \eqref{Eq:EpsFieldExpansion}
\cite[Theorem~7]{Bib:Karm-Vod1}.
\begin{theorem}[\cite{Bib:VK-Approx, Bib:Karm-Vod1}]
\label{Th:LocApprForDcc}
Let $g \in \mathcal{M}$. Then for every two points $u$,
$v \in B_{cc}(g,\varepsilon)$ we have
$$
	|d_{cc}(u,v) - d^g_{cc}(u,v)| = o(\varepsilon)
	\quad \text{~as~} \varepsilon \to 0
$$
where $o(\varepsilon)$ is uniform in $u$, $v \in B(g,\varepsilon)$
and $g$ belonging to some compact set.
\end{theorem}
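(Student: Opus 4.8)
The plan is to reduce the statement to the uniform convergence $X^\varepsilon_i \to \widehat X^g_i$ of the rescaled fields from Theorem~\ref{Prop:NilpLieAlgebra} by rescaling both distances with the dilation $\Delta^g_\varepsilon$. First I would record how the two distances behave under $\Delta^g_\varepsilon$. On the tangent cone the dilations $\Delta^g_\varepsilon$ and $\delta^g_\varepsilon$ coincide, since both are the scaling $x_i \mapsto \varepsilon^{\deg X_i} x_i$ of the first-kind coordinates (here I use the Property that $\exp(\sum_i a_i X_i)(g) = \exp(\sum_i a_i \widehat X^g_i)(g)$). Because the fields $\widehat X^g_i$ are homogeneous and the Carnot--Carath\'eodory metric of a Carnot group is exactly homogeneous, $d^g_{cc}(\Delta^g_{1/\varepsilon} u, \Delta^g_{1/\varepsilon} v) = \varepsilon^{-1} d^g_{cc}(u,v)$. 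For $d_{cc}$ the scaling is not exact, but a horizontal curve $\dot\gamma = \sum_{i \le \dim H_1}\phi_i X_i(\gamma)$ is carried by $\Delta^g_{1/\varepsilon}$ to a curve horizontal for the pushed-forward frame $X^\varepsilon_1, \dots, X^\varepsilon_{\dim H_1}$ with the same controls $\phi_i$ up to the factor $\varepsilon^{-1}$; hence $d^\varepsilon_{cc}(\Delta^g_{1/\varepsilon} u, \Delta^g_{1/\varepsilon} v) = \varepsilon^{-1} d_{cc}(u,v)$, where $d^\varepsilon_{cc}$ denotes the Carnot--Carath\'eodory distance generated by the frame $\{X^\varepsilon_i\}$.

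Writing $\tilde u = \Delta^g_{1/\varepsilon} u$ and $\tilde v = \Delta^g_{1/\varepsilon} v$, the hypothesis $u, v \in B_{cc}(g,\varepsilon)$ together with comparability of the metrics (Proposition~\ref{Prop:MetricsEquality} and Ball--Box) places $\tilde u, \tilde v$ in a fixed box $\Bx(g,C)$ independent of $\varepsilon$. Thus the theorem is equivalent to $|d^\varepsilon_{cc}(\tilde u, \tilde v) - d^g_{cc}(\tilde u, \tilde v)| = o(1)$ as $\varepsilon \to 0$, uniformly for $\tilde u, \tilde v$ in that box and $g$ in a compact set. I would prove the two inequalities symmetrically by transporting controls between the two frames. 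Given a near-minimizing horizontal curve for $d^\varepsilon_{cc}(\tilde u, \tilde v)$, I integrate the same control functions against $\widehat X^g_i$ to produce a competitor curve for $d^g_{cc}$ of the same length; Theorem~\ref{Th:PathsEstimate}, applied after discretizing the controls on the fixed $O(1)$ scale, shows that the two endpoints differ by $o(1)$ in $d^g_\infty$, and this mismatch is corrected by a short horizontal arc of length $o(1)$. Reversing the roles of $X^\varepsilon_i$ and $\widehat X^g_i$ yields the opposite inequality, and combining gives the required $o(1)$ estimate, i.e. $|d_{cc}(u,v) - d^g_{cc}(u,v)| = o(\varepsilon)$ after multiplying back by $\varepsilon$.

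The main obstacle is that the rescaled frame is only \emph{approximately} horizontal with respect to the tangent-cone distribution: by \eqref{Eq:EpsFieldExpansion} one has $X^\varepsilon_i = \widehat X^g_i + \sum_j a_{ij}\widehat X^g_j$ with $a_{ij} = o(1)$, and the components $a_{ij}$ with $\deg X_j \ge 2$ push the transported curve out of the horizontal distribution of $\mathcal{G}^g$. Consequently a curve horizontal for one frame is not horizontal for the other, so the distances cannot be compared by a naive bi-Lipschitz estimate on the frames; it is precisely this defect that Theorem~\ref{Th:PathsEstimate} is designed to absorb, turning the non-horizontal drift into an $o(\varepsilon)$ endpoint error before rescaling. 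The remaining work is bookkeeping: choosing a single discretization fine enough at the unit scale, and checking that every $o(\cdot)$ produced by Theorem~\ref{Prop:NilpLieAlgebra} and Theorem~\ref{Th:PathsEstimate} is uniform in $\tilde u, \tilde v$ and in $g$ over the compact set, so that the final bound is genuinely uniform. As a consistency check, the analogous statement for the quasimetric $d_\infty$ is already available as Theorem~\ref{Theorem:MetricsDeviation}, and since $d_{cc}$ and $d_\infty$ (respectively $d^g_{cc}$ and $d^g_\infty$) are comparable by the Ball--Box theorem, this corroborates the claimed estimate, although comparability alone does not yield the sharp $o(\varepsilon)$ rate.
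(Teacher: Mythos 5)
A preliminary remark: the paper does not prove Theorem~\ref{Th:LocApprForDcc} at all --- it imports the statement from \cite{Bib:VK-Approx, Bib:Karm-Vod1}, noting only that the cited argument rests on the expansion \eqref{Eq:EpsFieldExpansion}. Your rescaling outline is consistent in spirit with that scheme (dilations, convergence $X^\varepsilon_i \to \widehat X^g_i$, transport of controls, Theorem~\ref{Th:PathsEstimate}), but as a proof it has a genuine gap exactly at the step you dismiss as bookkeeping. Theorem~\ref{Th:PathsEstimate} compares two compositions of a \emph{fixed} number $L$ of exponentials carrying the \emph{same} coefficient tuples, one along the $X_i$ and one along the $\widehat X^g_i$; it says nothing about the distance between a general near-minimizing horizontal curve and a piecewise-exponential curve obtained by discretizing its controls, and that replacement is the crux. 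It cannot be handled by continuous dependence on the data: a Gronwall argument controls the endpoint shift of the discretization only in the Riemannian metric (order $\varepsilon^2$ at scale $\varepsilon$), and the anisotropy converts a Riemannian error $\varepsilon^2$ into a $d_\infty$- or $d_{cc}$-error that may be as large as $\varepsilon^{2/M}$, which is not $o(\varepsilon)$ once $M \ge 2$. Nor can you discretize ``on the fixed $O(1)$ scale'' as you propose: \eqref{Eq:EpsFieldExpansion} gives only $C^0$-closeness of $X^\varepsilon_i$ to $\widehat X^g_i$, and the anisotropic dilations destroy uniform Lipschitz bounds for $C^1$ fields, so there is no uniform continuous-dependence estimate for the frames $\{X^\varepsilon_i\}$. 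What is actually needed is a sub-Riemannian (graded, iterated-integral) estimate of the mismatch between a horizontal curve and its $L$-segment discretization, of the form $\eta(L)\,\varepsilon$ with $\eta(L) \to 0$, followed by a diagonal argument in $(L,\varepsilon)$: the relative accuracy of an $L$-segment approximation of a near-minimizer is not $o(1)$ for fixed $L$, while Theorem~\ref{Th:PathsEstimate} is stated per fixed $L$, so the order of the two limits matters. This missing lemma is precisely where the substance of the cited proofs lies (discretization on the group side, where the fields are polynomial and homogeneous, is indeed routine; on the manifold side it is not).

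There is also a circularity relative to this paper's logical order. Theorem~\ref{Th:CmpDinftyAndDcc} and the Ball--Box theorem are derived here as \emph{corollaries} of Theorem~\ref{Th:LocApprForDcc}, so you may not invoke them to place $\tilde u$, $\tilde v$ in a fixed box: that confinement uses $d_\infty \le C\, d_{cc}$, which is exactly the not-yet-available direction. Only $d_{cc} \le C\, d_\infty$ is independently at hand, from Theorem~\ref{Theorem:CoordSys} (whose proof uses Theorem~\ref{Th:PathsEstimate} and degree theory only); the group-side analogue needed for your $o(1)$ correction arcs follows from Lemma~\ref{Lemma:FSRepresentation} and is fine. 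To repair the confinement step you would need the independent estimate that a horizontal curve of $d_{cc}$-length $r$ issued from $x$ remains in $\Bx(x, Cr)$ --- proved in \cite{Bib:Karm-Vod1}, but nowhere in this paper. Until both points are supplied, your text is an accurate plan of attack, matching the cited strategy, rather than a proof.
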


As a corollary we obtain a comparison of metric $d_{cc}$ and quasimetric
$d_\infty$ and Ball-Box theorem.

\begin{theorem}[{\cite[Theorem 11]{Bib:Karm-Vod1}}]
\label{Th:CmpDinftyAndDcc}
Let $g \in \mathcal{M}$. There exists a compact neighborhood
$U(g) \subset \mathcal{M}$ and constants $0 < C_1 \leq C_2 < \infty$
independent of $u$, $v \in U(g)$ such that
\begin{equation} \label{CmpDinftyAndDcc}
	C_1 d_\infty(u,v) \leq d_{cc}(u,v) \leq C_2 d_\infty(u,v)
\end{equation}
for all $u$, $v \in U(g)$.
\end{theorem}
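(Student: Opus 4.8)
The plan is to prove the two inequalities in \eqref{CmpDinftyAndDcc} separately on a suitable compact neighborhood $U(g)$, exploiting the special horizontal coordinate system for the upper bound and the local approximation theorems for the lower bound.

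For the upper estimate $d_{cc}(u,v) \leq C_2\, d_\infty(u,v)$ I would invoke Theorem~\ref{Theorem:CoordSys}: every $v$ near $u$ admits a representation $v = \exp(a_L X_{j_L}) \circ \dots \circ \exp(a_1 X_{j_1})(u)$ with all $X_{j_i}$ \emph{horizontal}, with $|a_i| \leq c_2\, d_\infty(u,v)$, and with $L$, $c_2$ uniform. Concatenating the integral segments $s \mapsto \exp(s X_{j_i})(\cdot)$, $s \in [0, a_i]$, produces an absolutely continuous horizontal curve from $u$ to $v$. Since the Riemannian norms $|X_{j_i}|$ are bounded by some constant $K$ on the compact neighborhood, the $i$-th segment has Riemannian length at most $K|a_i|$, so the curve can be traversed with unit horizontal speed in time at most $K\sum_i |a_i| \leq K L c_2\, d_\infty(u,v)$. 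Hence $d_{cc}(u,v) \leq C_2\, d_\infty(u,v)$ with $C_2 = K L c_2$. This step is essentially bookkeeping and poses no real difficulty.

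For the lower estimate $C_1\, d_\infty(u,v) \leq d_{cc}(u,v)$ I would pass to the nilpotent tangent cone and use the classical Ball--Box inequality there. Fix $u$, $v$ with $\varepsilon := d_\infty(u,v)$ small and take $g = u$ as base point, so that $u, v \in \overline{\Bx(u,\varepsilon)}$. By Theorem~\ref{Th:LocApprForDcc} one has $d^u_{cc}(u,v) \leq d_{cc}(u,v) + o(\varepsilon)$; in the local Carnot group $\mathcal{G}^u$ the distance $d^u_{cc}$ is comparable to the homogeneous quasimetric $d^u_\infty$, say $d^u_{cc}(u,v) \geq c_0\, d^u_\infty(u,v)$ with $c_0$ uniform over $u$ in the compact set (this is the Ball--Box theorem for Carnot groups, the uniformity following from the continuous dependence of the structure constants $\widehat c_{ijk}$ on $u$ in Theorem~\ref{Prop:NilpLieAlgebra}); and by Theorem~\ref{Theorem:MetricsDeviation}, $d^u_\infty(u,v) \geq d_\infty(u,v) - o(\varepsilon) = \varepsilon - o(\varepsilon)$. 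Chaining these gives $d_{cc}(u,v) \geq c_0(\varepsilon - o(\varepsilon)) - o(\varepsilon) = c_0\varepsilon - o(\varepsilon)$, so for $\varepsilon$ small enough the inequality holds with $C_1 = c_0/2$. Equivalently, one may argue by contradiction in the style of Proposition~\ref{Prop:MetricsEquality}: assuming the left inequality fails on every neighborhood produces sequences $x_n, y_n \to x_0$ with $d_{cc}(x_n,y_n) \leq \tfrac1n d_\infty(x_n,y_n)$, and feeding these into the same three estimates contradicts the uniform Carnot-group bound.

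The main obstacle is this lower bound, and specifically the bookkeeping of scales: the approximation theorems only supply $o(\varepsilon)$ errors, which must be dominated by the leading term $\varepsilon \sim d_\infty(u,v)$, so the base point and scale must be chosen as $g = u$, $\varepsilon = d_\infty(u,v)$ for the errors to be negligible. The second delicate point is the \emph{uniformity} of the Carnot-group comparison constant $c_0$ as $u$ ranges over the compact neighborhood, which I would secure from the fact that the nilpotent Lie algebra structure of $\mathcal{G}^u$ varies continuously with $u$, so the relevant comparison constants can be taken uniform on compacta.
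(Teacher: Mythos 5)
Your argument is correct, and it is worth noting at the outset that the paper offers no proof of Theorem~\ref{Th:CmpDinftyAndDcc} at all: the statement is imported verbatim from \cite[Theorem~11]{Bib:Karm-Vod1}, with the surrounding text only indicating that it is obtained ``as a corollary'' of the local approximation theorem (Theorem~\ref{Th:LocApprForDcc}). Your two-sided derivation is a faithful reconstruction of that intended route: the upper bound by integrating the horizontal segments supplied by Theorem~\ref{Theorem:CoordSys} (this is exactly the mechanism by which the paper gets Theorem~\ref{Th:Rashevsky} and the finiteness of $d_{cc}$), and the lower bound by the chain $d_{cc}\geq d^u_{cc}-o(\varepsilon)\geq c_0\,d^u_\infty-o(\varepsilon)\geq c_0\,d_\infty-o(\varepsilon)$ through Theorems~\ref{Th:LocApprForDcc} and \ref{Theorem:MetricsDeviation}. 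One point you should make explicit: Theorem~\ref{Th:LocApprForDcc} is hypothesized for $u$, $v\in B_{cc}(g,\varepsilon)$, whereas your normalization $\varepsilon=d_\infty(u,v)$ only places $u$, $v$ in the box $\overline{\Bx(u,\varepsilon)}$. This is where your ordering saves you from circularity: since the upper bound is already proved, $d_{cc}(u,v)\leq C_2\varepsilon$, so $u$, $v\in B_{cc}(u,2C_2\varepsilon)$ and the error at scale $2C_2\varepsilon$ is still $o(\varepsilon)$ by the stated uniformity --- but this conversion must be said, not left implicit. Your appeal to continuity of the structure constants $\widehat c_{ijk}$ (which are continuous since the fields are $C^1$) to make the Carnot-group comparison constant $c_0$ uniform on compacta is the right justification and is precisely what the cited reference secures.
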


The following statement was proved under smooth enough vector fields in
\cite{Bib:NSW, Bib:Gromov}, for $C^{1,\alpha}$-smooth vector fields,
$\alpha \in (0,1]$, in \cite{Bib:Karm-Vod} and for $C^1$-smooth
vector fields in \cite{Bib:Karm-Vod1}.

\begin{corollary}[Ball-Box theorem \cite{Bib:Karm-Vod1}]
Given a compact neighborhood $U \in \mathcal{M}$,
there exist constants $0 < C_1 \leq C_2 < \infty$ and $r_0 > 0$
independent of $x \in U$ such that
$$
	\Bx(x,C_1 r) \subset B_{cc}(x,r) \subset \Bx(x,C_2 r)
$$
for all $r \in (0, r_0)$ and $x \in U$.
\end{corollary}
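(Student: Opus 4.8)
The plan is to read off both inclusions directly from the two-sided comparison \eqref{CmpDinftyAndDcc} of Theorem~\ref{Th:CmpDinftyAndDcc}, once that comparison has been made uniform over the whole compact set $U$. To avoid a clash with the constants $C_1, C_2$ of the present statement, I denote the comparison constants of Theorem~\ref{Th:CmpDinftyAndDcc} by $A_1 \le A_2$. For each $x \in U$ that theorem supplies a compact neighborhood $U(x)$ and constants $0 < A_1(x) \le A_2(x) < \infty$ with $A_1(x)\, d_\infty(u,v) \le d_{cc}(u,v) \le A_2(x)\, d_\infty(u,v)$ for all $u,v \in U(x)$; since $d_{cc}$ is a genuine, globally defined metric (Theorem~\ref{Th:Rashevsky}), I may also fix $\rho(x) > 0$ with $\overline{B_{cc}(x, 2\rho(x))} \subset U(x)$.

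As $U$ is compact, the open cover $\{ B_{cc}(x, \rho(x)) \}_{x \in U}$ has a finite subcover $B_{cc}(x_1, \rho_1), \dots, B_{cc}(x_m, \rho_m)$, $\rho_j = \rho(x_j)$, and I set $A_1 = \min_j A_1(x_j)$ and $A_2 = \max_j A_2(x_j)$. Given $x \in U$, pick $j$ with $x \in B_{cc}(x_j, \rho_j)$; then for $r \le \rho_j$ the triangle inequality for $d_{cc}$ yields $B_{cc}(x,r) \subset B_{cc}(x_j, 2\rho_j) \subset U(x_j)$. I next choose $r_0 \le \min_j \rho_j$ small enough that in addition $\Bx(x, r_0/A_1) \subset U(x_j)$ for every $x \in U$, which is possible because the boxes shrink to their center uniformly in $x$ by compactness. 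Then for all $r \in (0, r_0)$ any point $v$ with $d_{cc}(x,v) < r$, and any $v$ with $d_\infty(x,v) < r/A_2$, lies in $U(x_j)$ together with $x$, so \eqref{CmpDinftyAndDcc} applies to $(x,v)$ with the uniform constants $A_1, A_2$.

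It remains to convert the comparison into inclusions. If $d_\infty(x,v) < r/A_2$ then $d_{cc}(x,v) \le A_2\, d_\infty(x,v) < r$, giving $\Bx(x, r/A_2) \subset B_{cc}(x,r)$; conversely, if $d_{cc}(x,v) < r$ then $A_1\, d_\infty(x,v) \le d_{cc}(x,v) < r$, so $d_\infty(x,v) < r/A_1$ and $B_{cc}(x,r) \subset \Bx(x, r/A_1)$. Putting $C_1 = 1/A_2$ and $C_2 = 1/A_1$ gives $\Bx(x, C_1 r) \subset B_{cc}(x,r) \subset \Bx(x, C_2 r)$ with $0 < C_1 \le C_2 < \infty$ independent of $x \in U$ and valid for every $r \in (0, r_0)$, as required.

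The delicate point is not the algebra of the last step but the covering bookkeeping: $d_\infty$ is only defined for nearby points and its triangle inequality carries the constant $Q$, so I must guarantee that every $v$ singled out by a $d_\infty$-condition — in particular those filling the box $\Bx(x, r/A_2)$ — genuinely lands in the neighborhood $U(x_j)$ on which \eqref{CmpDinftyAndDcc} is valid. The uniform choice of $r_0$, justified by the compactness of $U$, is exactly what secures this.
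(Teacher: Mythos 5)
Your proof is correct and follows exactly the route the paper intends: the Ball--Box inclusions are read off from the two-sided comparison \eqref{CmpDinftyAndDcc} of Theorem~\ref{Th:CmpDinftyAndDcc}, with $C_1 = 1/A_2$, $C_2 = 1/A_1$ after the local constants are made uniform over $U$ by a finite-subcover argument. The paper states the corollary without writing out this covering bookkeeping (citing \cite{Bib:Karm-Vod1}), so your contribution is simply to supply those standard compactness details -- including the genuinely necessary check that the boxes $\Bx(x, r/A_2)$ land inside the neighborhoods where \eqref{CmpDinftyAndDcc} is valid -- and they are supplied correctly.
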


% === Section 2 ===
\section{Approximate limit and differentiability}
\label{Section2}

\subsection{Hausdorff measure}

The (spherical) $k$-dimensional {\it Hausdorff measure} of the set $E$ with
respect to metric $d_{cc}$ is the quantity
\begin{equation*}
	\mathcal{H}^k(E) = \lim_{\varepsilon \to 0+} \inf \Bigl\{
	\sum_i r^k_i : E \subset \bigcup_i B_{cc}(x_i, r_i), r_i < \varepsilon
	\Bigr\}.
\end{equation*}

\begin{theorem}[\cite{Bib:Mitchell, Bib:Karm-Vod}]
The Hausdorff dimension of $\mathcal{M}$ with respect to $d_{cc}$
is equal to
$$
	\nu = \sum_{k=1}^N \deg X_k = \sum_{i=1}^M i (\dim H_i - \dim H_{i-1})
$$
where $\dim H_0 = 0$.
\end{theorem}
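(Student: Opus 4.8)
The plan is to deduce the dimension from a two-sided volume estimate on metric balls, using the Ball-Box theorem as the only nontrivial geometric input. Before that, observe that the two displayed expressions for $\nu$ agree by a counting argument: the number of indices $k$ with $\deg X_k = i$ is exactly $\dim H_i - \dim H_{i-1}$, since by property $(1)$ of the definition the fields $X_1,\dots,X_{\dim H_i}$ span $H_i$ while those of degree $\le i-1$ span $H_{i-1}$, so the fields of degree exactly $i$ are $X_{\dim H_{i-1}+1},\dots,X_{\dim H_i}$. Grouping the sum $\sum_{k=1}^N \deg X_k$ according to the value of $\deg X_k$ and multiplying each value $i$ by its count $\dim H_i - \dim H_{i-1}$ gives the second formula. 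This part is immediate, so the substance is the equality $\dim_{\mathcal H} \mathcal M = \nu$.

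First I would show that the Riemannian volume measure $\mathcal L$ is Ahlfors $\nu$-regular with respect to $d_{cc}$: on any compact neighborhood $U$ there are constants $0 < c_1 \le c_2 < \infty$ and $r_0 > 0$ with
$$
  c_1 r^\nu \le \mathcal L\bigl( B_{cc}(x,r) \bigr) \le c_2 r^\nu
  \quad \text{for all } x \in U,\ 0 < r < r_0.
$$
To get this I would compute the volume of a box. In the coordinates of the first kind $\theta_g$ the set $\Bx(g,\rho)$ is the Euclidean box $\{\,|y_i| < \rho^{\deg X_i}\,\}$, whose Lebesgue measure equals $2^N \prod_{i=1}^N \rho^{\deg X_i} = 2^N \rho^{\sum_i \deg X_i} = 2^N \rho^\nu$. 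Since each $\theta_g$ is a $C^1$-diffeomorphism with nonvanishing Jacobian depending continuously on $g$, the Jacobian is bounded away from $0$ and $\infty$ uniformly for $g$ in the compact set $U$; hence $\mathcal L(\Bx(g,\rho))$ is comparable to $\rho^\nu$ uniformly in $g \in U$. Feeding the inclusions $\Bx(x,C_1 r) \subset B_{cc}(x,r) \subset \Bx(x, C_2 r)$ of the Ball-Box theorem into this estimate yields the displayed $\nu$-regularity.

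Finally I would invoke the standard mass-distribution principle in both directions to turn the volume estimate into a statement about $\mathcal H^\nu$. Because $\mathcal M$ is a connected, hence $\sigma$-compact, manifold and Hausdorff dimension is determined on a countable cover by compact neighborhoods, it suffices to argue on a single $U$. For the lower bound on dimension, any admissible covering $E \cap U \subset \bigcup_i B_{cc}(x_i,r_i)$ satisfies $\mathcal L(E \cap U) \le \sum_i \mathcal L(B_{cc}(x_i,r_i)) \le c_2 \sum_i r_i^\nu$, so $\mathcal H^\nu(E \cap U) \ge c_2^{-1}\mathcal L(E \cap U) > 0$ whenever $\mathcal L(E \cap U) > 0$; this forces $\dim_{\mathcal H} \mathcal M \ge \nu$. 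For the upper bound, a Vitali- or Besicovitch-type covering of $U$ by balls of radius $< \varepsilon$ with bounded overlap gives $\sum_i r_i^\nu \le c_1^{-1}\sum_i \mathcal L(B_{cc}(x_i,r_i)) \le C c_1^{-1}\mathcal L(U') < \infty$, so $\mathcal H^\nu(U) < \infty$ and $\dim_{\mathcal H} \mathcal M \le \nu$. The main obstacle is the uniform box-volume estimate: one must control the Jacobian of $\theta_g$ simultaneously over the base points $g$ and combine it cleanly with the Ball-Box inclusions, after which the reduction to Hausdorff dimension is routine measure theory.
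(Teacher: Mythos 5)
Your proposal is correct, but note that the paper contains no proof of this statement at all: it is quoted with the citations [Mi] (Mitchell) and [KV] (Karmanova--Vodopyanov), so there is no internal argument to compare against. What you have written is essentially the standard proof from those references --- Ahlfors $\nu$-regularity of the Riemannian volume via the Ball-Box theorem, then the mass-distribution principle in both directions --- and it is fully consistent with the toolkit the paper itself assembles: the Ball-Box corollary is stated for a compact neighborhood $U$ with constants uniform in $x\in U$ and $r\in(0,r_0)$, which is exactly the uniformity you need, and by definition of $d_\infty$ the ball $\Bx(g,\rho)$ is indeed $\theta_g$ applied to the Euclidean box $\{|y_i|<\rho^{\deg X_i}\}$ of Lebesgue measure $2^N\rho^\nu$. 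Three small points you gloss over, none fatal: (i) in the lower-bound direction the covering balls $B_{cc}(x_i,r_i)$ need not be centered in $U$, but any ball of radius $r_i<\varepsilon$ meeting $U$ has its center in a slightly enlarged compact set on which the same regularity holds, so discard the rest and adjust constants; (ii) the uniform two-sided Jacobian bound for $\theta_g$, which you correctly identify as the crux, follows from joint continuity of $(y,g)\mapsto\det D\theta_g(y)$ (smooth dependence of ODE solutions on initial data for $C^1$ fields, as in the paper's Subsection on coordinates of the 1st kind) together with $\det D\theta_g(0)\neq 0$ and compactness; (iii) the bounded-overlap covering in the upper bound deserves a word --- either invoke the doubling property the paper records right after the Ball-Box theorem, or replace it by the more elementary device of a maximal $\varepsilon$-separated set in $U$, for which the balls $B_{cc}(x_i,\varepsilon/2)$ are pairwise disjoint (here $d_{cc}$ is a genuine metric, so no quasimetric subtleties arise) and carry volume $\geq c_1(\varepsilon/2)^\nu$, giving $\mathcal{H}^\nu(U)<\infty$ directly. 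With these details supplied, your reduction to a countable union of compact neighborhoods (the manifold being connected and second countable, hence $\sigma$-compact) closes the argument, and what it buys is a self-contained proof of a statement the paper merely imports.
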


Ball-Box theorem implies the double property of measure.

\begin{proposition}
We have
$$
	\mathcal{H}^\nu(B_{cc}(x,2r)) \leq C \mathcal{H}^\nu(B_{cc}(x,r))
$$
where $C < \infty$ is bounded in $r \in (0, r_0]$ and $x$ belonging
to some compact part $V \subset \mathcal{M}$.
\end{proposition}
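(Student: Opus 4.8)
The plan is to reduce the doubling inequality to the single uniform two-sided estimate $\mathcal{H}^\nu(B_{cc}(x,r)) \asymp r^\nu$ for $x$ in a compact set and $r$ small. Once this is in hand, doubling is immediate: writing $C_l\, r^\nu \le \mathcal{H}^\nu(B_{cc}(x,r))$ and $\mathcal{H}^\nu(B_{cc}(x,2r)) \le C_u (2r)^\nu$, we obtain $\mathcal{H}^\nu(B_{cc}(x,2r)) \le (C_u 2^\nu / C_l)\,\mathcal{H}^\nu(B_{cc}(x,r))$, which is the claim with $C = C_u 2^\nu / C_l$.

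First I would control the Riemannian volume $\mathrm{Vol}$ of boxes. In the coordinates of the first kind $\theta_x$ the box $\Bx(x,r)$ is the $\theta_x$-image of the Euclidean box $\{\,|y_i| < r^{\deg X_i}\,\}$, whose Lebesgue measure equals $2^N r^\nu$ because $\nu = \sum_i \deg X_i$. Since the fields $X_i$ are $C^1$ and form a basis at every point, the Jacobian of $\theta_x$ is continuous and equals $\det[X_1(x)\,|\,\cdots\,|\,X_N(x)]$ at the origin; by joint continuity in the base point and the argument and by compactness it is bounded away from $0$ and $\infty$ on a compact neighborhood (and the relevant coordinate box shrinks with $r$, so the Jacobian stays near its value at the origin). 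Hence $\kappa_1 r^\nu \le \mathrm{Vol}(\Bx(x,r)) \le \kappa_2 r^\nu$ uniformly, and the Ball-Box theorem $\Bx(x,C_1 r) \subset B_{cc}(x,r) \subset \Bx(x,C_2 r)$ upgrades this to $\mathrm{Vol}(B_{cc}(x,r)) \asymp r^\nu$, uniformly in $x$ and $r$.

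Next I would sandwich $\mathcal{H}^\nu$ between constant multiples of $r^\nu$ by two standard covering arguments. For the upper bound, I take a maximal $\rho$-separated set $\{x_j\}$ in $B_{cc}(x,r)$ with $\rho < r$; the balls $B_{cc}(x_j,\rho/2)$ are pairwise disjoint and lie in $B_{cc}(x,2r)$, so the volume estimate bounds their number by $C(r/\rho)^\nu$, while by maximality the balls $B_{cc}(x_j,\rho)$ cover $B_{cc}(x,r)$; summing $\rho^\nu$ and letting $\rho \to 0$ yields $\mathcal{H}^\nu(B_{cc}(x,r)) \le C_u r^\nu$. For the lower bound, given any admissible cover $B_{cc}(x,r) \subset \bigcup_i B_{cc}(y_i,\rho_i)$ with $\rho_i$ small, subadditivity of $\mathrm{Vol}$ together with the volume estimate gives $c\, r^\nu \le \mathrm{Vol}(B_{cc}(x,r)) \le \sum_i \mathrm{Vol}(B_{cc}(y_i,\rho_i)) \le C \sum_i \rho_i^\nu$, and taking the infimum over covers yields $\mathcal{H}^\nu(B_{cc}(x,r)) \ge C_l r^\nu$.

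The hard part is the uniformity in the volume estimate of the first step: the chart $\theta_x$ itself moves with the base point $x$, so one must use the joint $C^1$-dependence of the flow $(x,y) \mapsto \exp(\sum_i y_i X_i)(x)$ on initial data and parameters to bound the Jacobian away from $0$ and $\infty$ simultaneously for all $x$ in the compact set, and exploit that the admissible coordinate box has side lengths $r^{\deg X_i} \to 0$ so that the Jacobian is uniformly comparable to its value at the center. The two covering steps are then routine packing arguments, and only the Ball-Box theorem and this volume estimate are used.
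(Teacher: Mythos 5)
Your proof is correct and follows the route the paper intends: the paper derives the doubling property directly from the Ball--Box theorem, and your argument simply supplies the standard supporting details (uniform Jacobian bounds for $\theta_x$ giving $\mathrm{Vol}(\Bx(x,r))\asymp r^\nu$, transfer to $B_{cc}$ via Ball--Box, and the packing/covering comparison showing $\mathcal{H}^\nu(B_{cc}(x,r))\asymp r^\nu$ uniformly on compact sets). This matches the paper's one-line justification, made rigorous.
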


\subsection{Approximate limit and its properties}
\label{Section:ApproxLimit}
The {\it density} of a set $Y$ at $x \in \mathcal{M}$ is a limit
$$
	\lim_{r \to +0} \frac {\mathcal{H}^\nu( B_{cc}(x,r) \cap Y )} {\mathcal{H}^\nu( B_{cc}(x,r) )},
$$
if it exists at $x$ (where $\nu$ is the Hausdorff dimension of the space $\mathcal{M}$).

Let $E \subset \mathcal{M}$ be a measurable set and $f : E \to \mathbb M$ be a mapping to
a metric space $\mathbb M$.

A point $y \in \mathbb M$ is called the {\it approximate limit} of the mapping $f$
at the point $g \in E$ of density $1$ and is denoted by
$y = \ap \lim\limits_{x \to g} f(x)$ if the density
of set $E \setminus f^{-1}(W)$ at $g$ equals zero for every
neighborhood $W$ of the point $y$.

In the case $\mathbb M = \overline{\mathbb R}$ we also define the {\it approximate
upper limit} of the function $f$ at the point $g \in E$, denoted by
$\ap \varlimsup\limits_{x \to g} f(x)$, as the greatest lower bound of
the set of all numbers $s$ for which the density of the set
$\{ z \in \mathcal{M} : f(z) > s \}$ at the point $g$ equals zero.
By definition,
$\ap \varliminf\limits_{x \to g} f(x) = -\ap \varlimsup\limits_{x \to g} (-f(x))$ is the
{\it approximate lower limit}. It is easy to verify that
$\ap \varliminf\limits_{x \to g} f(x) \leq \ap \varlimsup\limits_{x \to g} f(x)$ and that
$\ap \lim\limits_{x \to g} f(x)$ exists if and only if
$\ap \varliminf\limits_{x \to g} f(x) = \ap \varlimsup\limits_{x \to g} f(x)$.

Next we state several properties regarding measurability and approximate limit
which we need in further arguments.

\begin{property} \label{Property1}
    Let $S$ be a $\mathcal{H}^\nu \times \mathcal{H}^{\tilde\nu}$-measurable set
    in $\mathcal{M} \times \widetilde{\mathcal{M}}$ and $z_0$ be
    a fixed point in $\widetilde{\mathcal{M}}$.
    For every $\varepsilon > 0$ and $\delta > 0$
    define $T$ as a set of the points $x$ for which
    $$
    \mathcal{H}^{\tilde\nu} \{ z: (x,z) \in S, \widetilde d_{cc}(z_0,z) \leq r \}
        \leq \varepsilon r^{\tilde\nu} \quad
            \text{~for all~} \: 0 < r < \delta.
    $$
    Then the set $T$ is measurable.
\end{property}

Really, for any $r > 0$, a set
$$
    S_r = S \cap \{ (x,z) : \widetilde d_{cc}(z_0,z) \leq r \}
    = S \cap ( \mathcal{M} \times \overline{\widetilde B_{cc}(z_0, r)} )
$$
is $\mathcal{H}^\nu \times \mathcal{H}^{\tilde\nu}$-measurable. By
Tonelli--Fubini theorem the set $\{ z: (x, z) \in S_r \}$ is $\mathcal{H}^{\tilde\nu}$-measurable
for $\mathcal{H}^\nu$-almost all $x$ and
$$
  \iint\limits_{\mathcal{M} \times \mathcal{\widetilde{M}}} \chi_{S_r} (x,z) \, dx \, dz
  = \int\limits_{\mathcal{M}} \int\limits_{\mathcal{\widetilde{M}}} \chi_{S_r} (x,z) \, dz \, dx
  = \int\limits_{\mathcal{M}} \mathcal{H}^{\tilde\nu} \{ z: (x, z) \in S_r \} \, dx.
$$
Consequently, the mapping
$$
  \varphi : x \mapsto \int\limits_{\mathcal{\widetilde{M}}} \chi_{S_r} (x,z) \, dz
  = \mathcal{H}^{\tilde\nu} \{ z: (x, z) \in S_r \}
$$
is $\mathcal{H}^\nu$-measurable. Then we have
$$
    T = \bigcap_{r \in (0,\delta) \cap \mathbb{Q}}
        \{ x: \varphi(x) \leq \varepsilon r^{\nu} \},
$$
where $\mathbb{Q}$ denotes the set of rational numbers.
It remains only to note that every set
$\{ x: \varphi(x) \leq \varepsilon r^{\nu} \}$
is $\mathcal{H}^\nu$-measurable.

\begin{property} \label{Property2}
    If $\sigma : \mathcal{M} \times \widetilde{\mathcal{M}} \to\overline{\mathbb R}$ is
    $\mathcal{H}^\nu \times \mathcal{H}^{\tilde\nu}$-measurable real-valued mapping and
    $z_0$ is a point in $\widetilde{\mathcal M}$ then
    $$
        \ap \varlimsup_{z \to z_0} \sigma(x, z) \; \text{~and~} \;
            \ap \varliminf_{z \to z_0} \sigma(x, z)
    $$
    are $\mathcal{H}^\nu$-measurable mappings of argument $x$.
\end{property}

First, notice that
$$
    \{ x \in \mathcal{M} : \ap \varlimsup_{z \to z_0} \sigma(x, z) \leq \tau \}
        = \bigcap_{t > \tau} A_t = \bigcap_{n=1}^{\infty} A_{\tau+\frac{1}{n}},
$$
where $A_t$ is a set of the points $x \in \mathcal{M}$ for which the set
$\{ z \in \widetilde{\mathcal M} : \sigma(x, z) > t \}$ has the density zero at $z_0$.
We have to make sure that $A_t$ is measurable. In order to do this
we apply Property~\ref{Property1} to the set
$$
    S_t = \{ (x, z) \in \mathcal{M} \times \widetilde{\mathcal{M}} : \sigma(x, z) > t \}
$$
and derive that the set $T_t(m,k)$ of the points $x \in \mathcal{M}$ for which
$$
    \mathcal{H}^{\nu} \{ z: (x,z)\in S_t,\, \widetilde d_{cc}(z_0,z) \leq r \} \leq \frac{r^\nu}{m}
         \ \text{~for all~} 0 < r < k^{-1},
$$
is measurable for all positive integers $m$ and $k$.
It remains only to observe that
$$A_t = \bigcap\limits_{m=1}^{\infty} \bigcap\limits_{k=1}^{\infty} T_t(m,k).$$

% === Subsection 2.2 ===
\subsection{Differentiability in the sub-Riemannian geometry}
\label{Section:Differentiability}

Fix $E \subset \mathbb{R}$ and a limit point $s \in E$. The mapping
$\gamma : E \to \mathcal{M}$ has {\it sub-Riemannian derivative} at
the point $s$ if there is an element $a \in \mathcal{G}^{\gamma(s)}$ such that
\begin{equation} \label{Def:Deriv}
    d_{cc}^{\gamma(s)} ( \gamma(s+t), \delta^{\gamma(s)}_t a ) = o(t) \:
        \text{~as~} t \to 0, \ s+t \in E.
\end{equation}
We use the notation
$a = \frac d{dt}_{sub} \gamma(t+s) \vert_{t = 0}$.
A derivative is called {\it horizontal} in the case
$a \in \exp (H_{\gamma(s)} \mathcal{M})$, i.~e.
$$
	a = \exp \Bigl( \sum_{j=1}^{\dim H_1} \alpha_j \widehat X^{\gamma(s)}_j \Bigr)(\gamma(s))
	  = \exp \Bigl( \sum_{j=1}^{\dim H_1} \alpha_j X_j \Bigr)(\gamma(s))
$$
for certain $\alpha_j \in \mathbb{R}$. 

In \cite{Bib:Vod-Spaces} it is proved that for a curve in the
Carnot--Carath\'eodory space to be horizontally differentiable it is
sufficient to be a Lipschitz mapping. Recall that
$\gamma : E \subset \mathbb{R} \to \mathcal{M}$ is called a {\it Lipschitz} mapping
if there is a constant $C > 0$ such that the inequality
$$
	d_{cc}( \gamma(x), \gamma(y) ) \leq C |x-y|
$$
holds for all $x, y \in E$.
\begin{theorem}[\cite{Bib:Vod-Spaces}]\label{Theorem:DifLipschitz}
    Every Lipschitz mapping $\gamma : E \to \mathcal{M}$,
    where the set $E \subset \mathbb{R}$ is closed,
    has horizontal derivative almost everywhere in $E$.
\end{theorem}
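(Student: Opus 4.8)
The plan is to reduce the statement to the classical Rademacher-type theorem for Lipschitz functions on the real line by using the metric derivative together with the horizontal-path structure supplied by the earlier sections. Since $\gamma : E \to \mathcal{M}$ with $E \subset \mathbb{R}$ closed is Lipschitz with respect to $d_{cc}$, I would first extend $\gamma$ to a Lipschitz mapping on the closed convex hull $[\inf E, \sup E]$ (or argue locally on bounded components), which costs nothing since $d_{cc}$-Lipschitz curves admit constant-speed reparametrizations. The key quantity to control is the metric speed $|\dot\gamma|(s) = \lim_{t \to 0} d_{cc}(\gamma(s+t),\gamma(s))/|t|$, which exists for almost every $s$ by the standard theorem on metric differentiability of Lipschitz curves into metric spaces (Ambrosio--Kirchheim); the Lipschitz bound gives $|\dot\gamma|(s) \le C$ a.e.

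The heart of the argument is to produce, at almost every $s$, the group element $a \in \mathcal{G}^{\gamma(s)}$ realizing the sub-Riemannian derivative in the sense of \eqref{Def:Deriv}. First I would fix a point $g = \gamma(s)$ where the metric speed exists and use Theorem~\ref{Theorem:CoordSys} to write, for each small $t$,
\begin{equation*}
  \gamma(s+t) = \exp(a_L(t) X_{j_L}) \circ \dots \circ \exp(a_1(t) X_{j_1})(g),
\end{equation*}
with $|a_i(t)| \le c_2\, d_\infty(g, \gamma(s+t)) = O(|t|)$ by the comparison in Theorem~\ref{Th:CmpDinftyAndDcc}. The candidate derivative is the horizontal element obtained by collecting the first-order parts of these coefficients: writing $a_i(t) = \alpha_i t + o(t)$ along a suitable sequence, the tangent object is $a = \exp\bigl(\sum_j \beta_j \widehat X^{g}_j\bigr)(g)$ for appropriate horizontal coefficients $\beta_j$, and I would verify that the error is $o(t)$ by passing from $\Phi_g$ to the nilpotent model $\widehat\Phi_g$ via Theorem~\ref{Th:PathsEstimate} and then comparing $d_{cc}$ with $d^g_{cc}$ through the local approximation Theorem~\ref{Th:LocApprForDcc}. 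Because the horizontal paths composing $\Phi_k$ consist of at most $L$ segments of integral lines of the basic horizontal fields, the relevant increments are of the right homogeneous order, so the remainder in \eqref{Def:Deriv} is genuinely higher-order.

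The main obstacle I anticipate is establishing existence of the first-order coefficients $\alpha_i$ almost everywhere, rather than merely the metric speed: the existence of $|\dot\gamma|$ controls only the scalar rate $d_{cc}(\gamma(s+t),\gamma(s))$, whereas \eqref{Def:Deriv} demands a genuine group-valued tangent. To handle this I would project onto coordinates: using the coordinate representation \eqref{Eq:HorizPathForm}, each coefficient $a_i(t)$ is, up to the $C^1$-diffeomorphism $\Phi_g$, a component of the Euclidean coordinate vector $\Phi_g^{-1}(\gamma(s+t))$, and these components are real-valued Lipschitz functions of $t$ on account of the bound $|a_i(t)| = O(|t|)$ and the bi-Lipschitz comparison of $d_2$, $d_\infty$, $d_{cc}$. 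Applying the classical one-dimensional Rademacher theorem to each of these finitely many scalar functions yields their differentiability almost everywhere, giving the coefficients $\alpha_i$ simultaneously at a.e.\ $s$. The delicate point is that the horizontal derivative should depend only on the degree-one (horizontal) coordinates; I would argue that the higher-degree coordinates, entering with weight $t^{\deg X_i}$, contribute only $o(t)$ to \eqref{Def:Deriv} precisely because of the homogeneity encoded in the dilations $\delta^g_t$ and the path estimate of Theorem~\ref{Th:PathsEstimate}, so that the limiting element $a$ is automatically horizontal and the derivative is exactly the horizontal one claimed.
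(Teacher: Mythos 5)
Your proposal founders on the step where you ``project onto coordinates.'' The map $\Phi_g$ of \eqref{Eq:CoordPhiSystem} is \emph{not} a $C^1$-diffeomorphism: for every $k > \dim H_1$ the curve $t \mapsto \Phi_k(t)(g)$ has velocity $\sum_{i} a_{i,k} X_{j_i}(g) \in H_1(g)$ at $t=0$, so $D\Phi_g(0)$ maps all of $\mathbb{R}^N$ into the $\dim H_1$-dimensional horizontal space and is degenerate. This is exactly why Theorem~\ref{Theorem:CoordSys} is proved by a topological-degree argument and yields only \emph{surjectivity} of $\Phi_g$ onto a neighborhood, with no injectivity whatsoever. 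Consequently $\Phi_g^{-1}$ does not exist, the representation \eqref{Eq:HorizPathForm} is not unique, and the coefficients $a_i(t)$ are not well-defined functions of $t$; a measurable selection would have to be constructed, and the bound $|a_i(t)| \leq c_2\, d_\infty(\gamma(s),\gamma(s+t)) = O(|t|)$ is only a radial bound controlling the difference quotient at $t=0$ --- it gives no Lipschitz property of $t \mapsto a_i(t)$ on a neighborhood of $0$, so the classical one-dimensional Rademacher theorem has nothing to act on. Even granting a Lipschitz selection, Rademacher would give differentiability of $a_i(\cdot)$ at almost every $t$ \emph{for fixed $s$}, whereas \eqref{Def:Deriv} requires the expansion $a_i(t) = \alpha_i t + o(t)$ at the single point $t=0$ for almost every base point $s$; since your coordinate system moves with $\gamma(s)$, the two statements do not exchange without a genuine recentering argument, and that is where the actual work lies.

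The second, independent gap is the order-of-error claim at the end: a relative coordinate of degree $m$ of size $O(|t|^m)$ contributes $O(|t|)$, \emph{not} $o(|t|)$, to the homogeneous distance $d^{\gamma(s)}_{cc}(\gamma(s+t), \delta^{\gamma(s)}_t a)$, because the homogeneous norm takes the $m$-th root of such a coordinate. The Lipschitz hypothesis alone (via Theorem~\ref{Th:CmpDinftyAndDcc} and the Ball--Box estimates) only yields the $O$-bound; upgrading it to $o(|t|^{\deg X_i})$ for the nonhorizontal coordinates, after subtracting the group contribution of the horizontal tangent, is the analytic heart of the theorem, and your proposal asserts it (``the remainder is genuinely higher-order'') rather than proving it. Note that the present paper does not prove this statement at all --- it imports it from \cite{Bib:Vod-Spaces}, where the proof proceeds differently: the curve is treated as a horizontal curve with bounded measurable velocity, and the sub-Riemannian derivative is produced at a.e.\ point by the nilpotent approximation machinery (of the kind recorded here as Theorems~\ref{Prop:NilpLieAlgebra} and \ref{Th:PathsEstimate}) applied at differentiability/Lebesgue points of the velocity; it is precisely there that the cancellations yielding $o(|t|^{\deg X_i})$ are established. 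Your preliminary steps --- extending $\gamma$ across the gaps of the closed set $E$ (valid because $(\mathcal{M}, d_{cc})$ is locally a geodesic space, not because of ``constant-speed reparametrizations'') and invoking a.e.\ existence of the metric speed --- are sound but, as you yourself note, do not touch either gap.
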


The mapping $f : E \subset \mathcal{M} \to \widetilde{\mathcal{M}}$
of two Carnot--Carath\'eodory spaces
is called \cite{Bib:Vod-Greshnov} differentiable at the point $g \in E$
if there is horizontal homomorphism $L : \mathcal{G}^g \to \mathcal{G}^{f(g)}$ of the
local Carnot groups such that
\begin{equation} \label{Eq:Differential}
    \widetilde d_{cc}^{f(g)}( f(v), L(v) )
    = o( d_{cc}^g(g, v) ) \text{~as~} E \cap \mathcal{G}^g \owns v \to g.
\end{equation}
Recall that {\it the horizontal homomorphism} of Carnot groups is a
homomorphism $L : {\mathbb G} \to \widetilde {\mathbb G}$ such that
$DL(0)(H {\mathbb G}) \subset H \widetilde {\mathbb G}$.

Local approximation theorem (Theorem~\ref{Th:LocApprForDcc}) gives
an opportunity to use both metrics of the initial space and of
local Carnot group in the definition~\eqref{Eq:Differential}.
Indeed, since
$$
 \widetilde d_{cc} (f(g), f(v))
 \leq \widetilde d_{cc} (f(g), L(v))
 + \widetilde d_{cc} (L(v), f(v)),
$$
we have
\begin{align}
  \widetilde d_{cc} (f(v), L(v))
  & = \widetilde d^{f(g)}_{cc}(f(v), L(v))
    + o \bigl( \widetilde d^{f(g)}_{cc} (f(g), f(v)) \bigr)
    + o \bigl( \widetilde d^{f(g)}_{cc} (f(g), L(v)) \bigr) \notag \\
  & = \widetilde d^{f(g)}_{cc}(f(v), L(v))
    + o \bigl( \widetilde d^{f(g)}_{cc} (f(v), L(v)) \bigr)
    + o \bigl( \widetilde d^{f(g)}_{cc} (f(g), L(v)) \bigr) \notag \\
  & = o( d^g_{cc}(g,v) )
    + o \Bigl( d^g_{cc} (v,g) \sup_{u: \: d^g_{cc}(u,g) = 1}
      \widetilde d^{f(g)}_{cc} (f(g), L(u)) \Bigr) \notag \\
  & = o( d^g_{cc}(g,v) ) = o( d_{cc}(g,v) ). \label{Eq:CDifferential}
\end{align}

The homomorphism $L : \mathcal{G}^g \to \mathcal{G}^{f(g)}$ satisfying
\eqref{Eq:Differential} is called {\it the differential} of the mapping
$f$ and is denoted by $D_g f$. One can show that if $g$ is the density point
then the differential is unique.
Moreover, it is easy to verify that differential commutes
with the one-parameter dilation group:
\begin{equation} \label{Eq:DiffCommutes}
    \tilde \delta^{f(g)}_t \circ D_g f = D_g f \circ \delta^g_t.
\end{equation}

If $v \in \mathcal{G}^g$ and $\delta^g_t v \in \mathcal{G}^g$ then, by \eqref{Eq:DiffCommutes},
we have
\begin{multline} \label{Eq:DiffToDeriv}
        \widetilde d_{cc}^{f(g)}( f(\delta^g_t v), \tilde \delta^{f(g)}_t D_g f(v) )
            = \widetilde d_{cc}^{f(g)}( f(\delta^g_t v), D_g f(\delta^g_t v) ) \\
        = o( d_{cc}^g(g, \delta^g_t v) ) = d_{cc}^g(g,v) o(t),
\end{multline}
i.~e. element $D_gf(v)$ is a derivative
of the curve $\gamma(t) = f(\delta^g_t v)$ at $t=0$.

By {\it the derivative} of the mapping $f$ along the horizontal
vector field $X$ at the point $g$ we mean the derivative of the curve
$$
    \gamma(t) = f(\delta^g_t \exp \widehat X^{g}(g)) = f(\exp t X(g))
$$
for $t=0$. We use the notation $Xf(g)$ to denote this derivative.
To be more precise we have to write $\widetilde \exp Xf(g)$ since usually
$Xf(g)$ is the Riemannian derivative 
$\left. \frac{d}{dt} f( \exp(tX)(g) )\right|_{t=0}$. To simplify notations
we will use $Xf(g)$ for the sub-Riemannian derivative except of the cases when
the opposite is stated explicitly.

The mapping $f : E \subset \mathcal{M} \to \widetilde{\mathcal{M}}$
of two Carnot--Carath\'eodory spaces is called a {\it Lipschitz} mapping if
there is a constant $C > 0$ such that the inequality
$$
	\widetilde d_{cc}( f(x), f(y) ) \leq C d_{cc} ( x, y )
$$
holds for all $x, y \in E$.

In the work \cite{Bib:Vod-Spaces} there were generalized the classical
Rademacher \cite{Bib:Rademacher} and Stepanoff \cite{Bib:StepDiff}
theorems to the case of Carnot--Carath\'eodory spaces.
\begin{theorem}[{\cite[Theorem 4.1]{Bib:Vod-Spaces}}] \label{Theorem:RademacherLike}
    Let $E$ be a set in $\mathcal{M}$ and let
    $f : E \to \widetilde{\mathcal{M}}$ be a Lipschitz
    mapping. Then $f$ is differentiable almost everywhere in $E$
    and the differential is unique.
\end{theorem}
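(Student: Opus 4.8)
The plan is to prove differentiability at every density point $g \in E$ at which all horizontal directional derivatives of $f$ exist, and then to check that such points exhaust $E$ up to a null set. Since uniqueness of the differential at a density point is already recorded in the text and almost every point of $E$ is a density point, uniqueness almost everywhere will follow automatically once existence is in hand.

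First I would secure the directional derivatives. For each basic horizontal field $X_j$, $j=1,\dots,\dim H_1$, the restriction of $f$ to an integral line $t \mapsto \exp(tX_j)(w)$ is a Lipschitz curve into $\widetilde{\mathcal M}$, because $\widetilde d_{cc}\bigl(f(\exp(sX_j)(w)), f(\exp(tX_j)(w))\bigr) \leq C\, d_{cc}(\exp(sX_j)(w), \exp(tX_j)(w)) \leq C|s-t|$. By Theorem~\ref{Theorem:DifLipschitz} this curve has a horizontal sub-Riemannian derivative for almost every parameter value. Foliating a coordinate neighborhood by the integral lines of $X_j$ and applying Fubini's theorem, with the comparison of $d_{cc}$ and $d_\infty$ (Theorem~\ref{Th:CmpDinftyAndDcc}) used to relate $\mathcal H^\nu$ to the product of arclength along the leaves and a transverse measure, I obtain that $X_j f(g)$ exists for $\mathcal H^\nu$-almost every $g$. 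Intersecting the resulting full-measure sets over the finitely many $j$ yields a full-measure set $E_0$ of density points at which every horizontal derivative exists and is horizontal.

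Next I would run a blow-up argument to manufacture the differential. Fix $g \in E_0$ and consider the rescaled maps $f_t(v) = \tilde\delta^{f(g)}_{1/t}\bigl(f(g)^{-1}\cdot f(\delta^g_t v)\bigr)$, defined for $v$ in a fixed ball of the local Carnot group $\mathcal G^g$ and small $t>0$. Using the Lipschitz bound together with the local approximation theorems (Theorem~\ref{Th:LocApprForDcc} and Theorem~\ref{Theorem:MetricsDeviation}, which allow one to replace $d_{cc}$ and $\widetilde d_{cc}$ by the homogeneous tangent-cone metrics $d^g_{cc}$, $\widetilde d^{f(g)}_{cc}$ at the cost of an $o(t)$ error), the family $\{f_t\}$ is uniformly Lipschitz with respect to the tangent-cone metrics and pointwise bounded. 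By Arzel\`a--Ascoli, along any sequence $t_n\to 0$ a subsequence of $f_{t_n}$ converges uniformly on compacta to a Lipschitz map $L:\mathcal G^g\to\mathcal G^{f(g)}$. Condition \eqref{Eq:Differential} is precisely the assertion that $f_t \to L$, so it suffices to show that every subsequential limit is the \emph{same} horizontal homomorphism.

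The main obstacle, and the heart of the matter, is the algebraic rigidity of the limit. I would first pin down $L$ on horizontal one-parameter subgroups: for $v=\exp(s\widehat X^g_j)(g)$ the existence and value of the derivative $X_j f(g)$, together with the path estimate of Theorem~\ref{Th:PathsEstimate} and the field expansion \eqref{Eq:EpsFieldExpansion} of Theorem~\ref{Prop:NilpLieAlgebra}, force $L(v)=\tilde\delta^{f(g)}_s\bigl(X_j f(g)\bigr)$, so $L$ is determined on the first layer independently of the subsequence. Then I would show that any limit $L$ is a graded group homomorphism commuting with dilations: commutation with $\delta^g$ and $\tilde\delta^{f(g)}$ is inherited from the self-similarity of the rescaling, as in \eqref{Eq:DiffCommutes}, while the homomorphism property is obtained by reaching an arbitrary $v$ through a bounded product of horizontal segments via Theorem~\ref{Theorem:CoordSys} and passing to the limit, with Theorem~\ref{Th:PathsEstimate} controlling the discrepancy between composing the initial flows and composing their nilpotentizations. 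Since a dilation-commuting homomorphism of $\mathcal G^g$ into $\mathcal G^{f(g)}$ is determined by its restriction to the generating first layer, all subsequential limits coincide with the single $L$ fixed above; hence $f_t \to L$ and $f$ is differentiable at $g$ with $D_g f = L$. I expect the delicate points to be the uniform Lipschitz estimate for $f_t$, where the non-homogeneity of the metric is absorbed by the $o(\varepsilon)$ approximation theorems, and the verification that each limit genuinely respects the group law rather than merely the dilation structure.
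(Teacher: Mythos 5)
Your overall architecture --- blow-up maps $f_t(v)=\tilde\delta^{f(g)}_{1/t}\bigl(f(g)^{-1}\cdot f(\delta^g_t v)\bigr)$, compactness, pinning the limit on the first layer, then algebraic rigidity --- is a Pansu-style route, genuinely different from the paper's (the paper obtains this theorem by feeding the Lipschitz map into Theorem~\ref{Theorem:Main} via Corollary~\ref{Corollary:LipschitzDerives} and then upgrading the approximate differential $L_g$ of \eqref{Eq:LMapping} to a genuine one by Lemma~\ref{Lemma:ApprDiffForLipschitz}). But your plan has a real gap exactly at its declared heart, the rigidity of subsequential limits. First, the claim that commutation with dilations ``is inherited from the self-similarity of the rescaling'' is circular: self-similarity gives only $f_{\lambda t}=\tilde\delta^{f(g)}_{1/\lambda}\circ f_t\circ\delta^g_\lambda$, i.e.\ it relates the limit along a sequence $(t_n)$ to the limit along the \emph{different} sequence $(\lambda t_n)$; it constrains a single subsequential limit $L$ only if you already know the whole family converges, which is what is being proved. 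Relation \eqref{Eq:DiffCommutes} is a property of the differential after differentiability is established, not a tool for producing it. Second, the homomorphism step fails as described: to pass to the limit along the composed horizontal segments of Theorem~\ref{Theorem:CoordSys} you need the difference quotients of $f$ in the direction $X_{j_i}$ at the \emph{moving} base points $u_{i-1}(t)$, which change with $t$, need not lie in $E$, and need not be points where $\ap X_{j_i}f$ exists; pointwise a.e.\ existence of horizontal derivatives gives no control there, and Theorem~\ref{Th:PathsEstimate} only bounds the domain-side discrepancy between compositions of the flows of $X_j$ and of $\widehat X^g_j$ --- it says nothing about $f$. The missing idea is the measure-theoretic uniformization that occupies Lemmas~\ref{Lemma:DerivativesAlongCurves}, \ref{Lemma:CoordinateDifferentiability} and Step~3 of the proof of Theorem~\ref{Theorem:Main}: Luzin and Egorov produce large closed sets on which the convergence of the difference quotients is uniform and $x\mapsto\ap X_jf(x)$ is uniformly continuous, and the Fubini-type density sets ($T^m_j$, $Z_j$, the auxiliary points $w'_k(t)$) allow the intermediate points to be replaced by nearby points of the good set. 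Without this layer your subsequential limits are pinned only on the horizontal lines through $g$ --- a null set, which does not determine a Lipschitz map --- so distinct subsequences could a priori yield distinct limits and the argument stalls.

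Two further points are fixable but should be recorded. Arzel\`a--Ascoli cannot be applied naively, since $f_t$ is defined only on $\delta^g_{1/t}(E)$ and no Lipschitz extension theorem into a general target $\widetilde{\mathcal{M}}$ is available; at a density point one must instead run an equi-Lipschitz diagonal argument over domains that fill the ball in the limit. And your Fubini step must produce, for a.e.\ $g$, not merely existence of $X_jf(g)$ but the fact that $g$ is a one-dimensional density point of $E$ along each integral line, since otherwise the derivative along the line constrains $f_t$ at too few points to force $L(v)=\tilde\delta^{f(g)}_s\bigl(X_jf(g)\bigr)$ on the first layer. These are standard repairs; the uniformization gap above is the substantive one, and once you add Egorov--Luzin at the moving base points you have essentially reconstructed the paper's proof of Theorem~\ref{Theorem:Main} combined with Lemma~\ref{Lemma:ApprDiffForLipschitz}.
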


\begin{theorem}[{\cite[Theorem 5.1]{Bib:Vod-Spaces}}] \label{Theorem:StepanoffLike}
    Let $E$ be a set in $\mathcal{M}$ and let a mapping
    $f : E \to \widetilde{\mathcal{M}}$
    satisfy the condition
    $$
        \varlimsup_{x \to a, x \in E}
            \frac {\widetilde d_{cc}(f(a),f(x))} {d_{cc}(a,x)} < \infty
    $$
    for almost all $a \in E$. Then $f$ is differentiable
    almost everywhere in $E$ and the differential is unique.
\end{theorem}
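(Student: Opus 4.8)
The plan is to reduce to the Rademacher-type Theorem~\ref{Theorem:RademacherLike} by exhausting $E$, up to a null set, by pieces on which $f$ is Lipschitz. For each positive integer $k$ I would set
\[
  A_k = \{\, a \in E : \widetilde d_{cc}(f(a), f(x)) \le k\, d_{cc}(a,x)
  \ \text{whenever}\ x \in E,\ d_{cc}(a,x) < 1/k \,\}.
\]
The hypothesis that the upper limit is finite at almost every $a \in E$ says precisely that $\bigcup_k A_k$ is co-null in $E$. For two points $a,a' \in A_k$ with $d_{cc}(a,a') < 1/k$ the defining inequality (with $x=a'$) gives $\widetilde d_{cc}(f(a),f(a')) \le k\, d_{cc}(a,a')$, so after a further countable subdivision of $A_k$ into Boxes of radius $< 1/(2Qk)$ each restriction $f|_{A_k}$ is genuinely Lipschitz. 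Theorem~\ref{Theorem:RademacherLike} then yields, at $\mathcal{H}^\nu$-almost every $a \in A_k$, a horizontal homomorphism $L = L_a : \mathcal{G}^a \to \mathcal{G}^{f(a)}$ that is the differential of the restriction.

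I would then work at the full-measure set of points $a \in E$ that simultaneously lie in some $A_k$, are $\mathcal{H}^\nu$-density points of that $A_k$, and are points of differentiability of $f|_{A_k}$; each property fails only on a null set. The decisive consequence of density, obtained from the doubling property of $\mathcal{H}^\nu$ together with the Ball--Box theorem, is that $A_k$ is $o(r)$-dense near $a$: if a Box $\Bx(v,\rho)$ centred at some $v \in \Bx(a,r)$ missed $A_k$, it would be contained in a Box about $a$ of radius $O(r)$ disjoint from $A_k$, whose measure is both $\gtrsim \rho^\nu$ and $o(1)\cdot r^\nu$, forcing $\rho = o(r)$. Hence for every $v$ with $d_{cc}(a,v) = r$ there is $v' \in A_k$ with $d_{cc}(v,v') = o(r)$, uniformly in $v$.

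The heart of the argument is to promote $L$ from a differential of the restriction to a differential of $f$ in the sense of \eqref{Eq:Differential}, i.e.\ with $v$ ranging over all of $E$. Fixing such $v$ and $v'$ and using the quasi-triangle inequality for $\widetilde d^{f(a)}_{cc}$, I would split
\begin{equation*}
\widetilde d^{f(a)}_{cc}(f(v), L(v)) \le \widetilde Q\bigl( \widetilde d^{f(a)}_{cc}(f(v), f(v')) + \widetilde d^{f(a)}_{cc}(f(v'), L(v')) + \widetilde d^{f(a)}_{cc}(L(v'), L(v)) \bigr).
\end{equation*}
The middle term is $o(d_{cc}(a,v')) = o(r)$ by differentiability of $f|_{A_k}$ at $a$, since $d_{cc}(a,v')$ is comparable to $r$. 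The last term is $O(d^a_{cc}(v',v)) = o(r)$ because a horizontal homomorphism commutes with the dilations by \eqref{Eq:DiffCommutes} and is left-invariant, whence $\widetilde d^{f(a)}_{cc}(L(v'),L(v)) = \widetilde d^{f(a)}_{cc}(f(a), L((v')^{-1}\!\cdot v))$ scales linearly in $d^a_{cc}(v',v)$. For the first term I note that $a \in A_k$ places $f(v),f(v')$ in $\widetilde B_{cc}(f(a),O(r))$, so Theorem~\ref{Th:LocApprForDcc} lets me replace $\widetilde d^{f(a)}_{cc}$ by $\widetilde d_{cc}$ at the cost of an additive $o(r)$; then, because the nearby point $v'$ lies in $A_k$ and $d_{cc}(v',v) < 1/k$, the defining inequality of $A_k$ gives $\widetilde d_{cc}(f(v'),f(v)) \le k\, d_{cc}(v',v) = o(r)$. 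Combining the three bounds yields $\widetilde d^{f(a)}_{cc}(f(v), L(v)) = o(d_{cc}(a,v))$, which is differentiability at $a$.

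The main obstacle is exactly this first term. Estimating $\widetilde d_{cc}(f(v),f(v'))$ through $f(a)$ would only give $O(r)$; the gain to $o(r)$ is possible solely because the nearby point $v'$ — not the arbitrary point $v$ — carries the uniform constant $k$ of the piece $A_k$ down to the fine scale $d_{cc}(v,v') = o(r)$, while $f$ at $v$ may be otherwise uncontrolled. Making the density estimate quantitatively uniform in $v$, and keeping every passage among $d_{cc}$, $d^a_{cc}$ and $\widetilde d^{f(a)}_{cc}$ within the $o(r)$ budget via Theorems~\ref{Th:LocApprForDcc} and \ref{Theorem:MetricsDeviation}, is the technical core. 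Uniqueness of the differential at a density point follows as in the remark after \eqref{Eq:DiffCommutes}: two homomorphisms satisfying \eqref{Eq:Differential} agree on a density-$1$ set of directions and, being homogeneous under the dilations, therefore coincide.
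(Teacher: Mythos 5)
Your proof is correct, but it takes a genuinely different route from the paper's. The paper's announced alternative proof of this theorem does not reduce directly to Theorem~\ref{Theorem:RademacherLike}: it splits $E$ into Lipschitz pieces via Theorem~\ref{Theorem:SetFamilyLipschitz} (which is proved under the weaker \emph{approximate} hypothesis \eqref{Eq:TotalBound}), constructs the candidate differential $L_g$ from approximate derivatives along the horizontal fields in the special coordinates \eqref{Eq:CoordPhiSystem} (Lemmas~\ref{Lemma:HorizDerivatives}, \ref{Lemma:DerivativesAlongCurves}, \ref{Lemma:CoordinateDifferentiability} and Theorem~\ref{Theorem:Main}), and then upgrades the approximate limit to a uniform one by Lemma~\ref{Lemma:ApprDiffForLipschitz}. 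You instead use the classical Federer--Stepanoff decomposition into the sets $A_k$ and invoke the cited Rademacher-type theorem on each piece. Your third step is structurally the same density-plus-nearby-good-point argument as the proof of Lemma~\ref{Lemma:ApprDiffForLipschitz} (your $v' \in A_k$ plays the role of the point $z \in W$ there, with the same three-term splitting, homogeneity of $L$, and the uniform conversions supplied by Theorems~\ref{Th:LocApprForDcc} and \ref{Theorem:MetricsDeviation}), but with one real gain: that lemma hypothesizes a two-sided Lipschitz bound on a whole relative ball $B(g,\eta)$, which a Stepanoff mapping need not satisfy on any relative ball, whereas your $A_k$ carries the constant $k$ \emph{one-sidedly} from the good point $v' \in A_k$ to an arbitrary $v \in E$ at scales below $1/k$ --- exactly the ingredient that passes from differentiability of the restriction $f|_{A_k}$ to differentiability of $f$ relative to all of $E$, a passage the paper's reduction leaves implicit. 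What the paper's longer route buys is the full equivalence of Theorem~\ref{Theorem:WorkResultDouble}, with the present theorem as a byproduct; what yours buys is a shorter, self-contained proof granted Theorem~\ref{Theorem:RademacherLike}. One technical remark: since neither $E$ nor $f$ is assumed measurable here, the sets $A_k$ need not be measurable, so ``density point'' should be read as outer density point (note Theorem~\ref{Theorem:RademacherLike} is stated for arbitrary sets); your empty-box argument survives in this setting if phrased as follows: a box $\Bx(v,\rho) \subset \Bx(a,Cr)$ disjoint from $A_k$ gives $\mathcal{H}^\nu(\Bx(a,Cr) \cap A_k) \leq \mathcal{H}^\nu(\Bx(a,Cr)) - \mathcal{H}^\nu(\Bx(v,\rho))$ for the outer measure, which at an outer density point of $A_k$ forces $\rho = o(r)$ uniformly in $v$, as you need.
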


Here we will write an alternative proof of Theorems~\ref{Theorem:RademacherLike}
and \ref{Theorem:StepanoffLike} using the theorem on approximate differentiability.

\subsection{Approximate differentiability}
\label{Section:ApproximateDiff}

Now we replace a regular limit in \eqref{Def:Deriv} by the approximate one.
This leads us to definition of an approximate (horizontal)
derivative as an element $a \in \exp H \mathcal{G}^{\gamma(s)}$ such that
$$
    \ap \lim_{t \to 0} \frac {d_{cc}^{\gamma(s)} ( \gamma(s+t), \delta^{\gamma(s)}_t a )} {|t|} = 0,
$$
i.~e. the set
$$
    \{ t \in (-r,r) : d_{cc}^{\gamma(s)} ( \gamma(s+t), \delta^{\gamma(s)}_t a ) > |t| \varepsilon \}
$$
has density zero at the point $t=0$ for an arbitrary $\varepsilon > 0$.

Similarly an approximate differential is the horizontal homomorphism
$L : \mathcal{G}^g \to \mathcal{G}^{f(g)}$ of the local Carnot groups such that
$$
    \ap \lim_{v \to g} \frac {\widetilde d_{cc}^{f(g)}(f(v), L(v))} {d_{cc}^g(g,v)} = 0,
$$
i.~e. a set
$$
    \{ v \in B_{cc}(g,r) \cap \mathcal{G}^g : \widetilde d_{cc}^{f(g)}( f(v), L(v) ) > d_{cc}^g(g,v) \varepsilon \}
$$
has $\mathcal{H}^\nu$-density zero at the point $v = g$ for any $\varepsilon > 0$.
We denote such homomorphism as $\ap D_g f$.

Using the notion of an approximate differential we can generalize
Theorem~\ref{Theorem:StepanoffLike} in the following direction.
\begin{theorem} \label{Theorem:ApprStepanoffLike}
    Let $E$ be a set in $\mathcal{M}$ and let
    $f : E \to \widetilde{\mathcal{M}}$ meet the condition
    \begin{equation} \label{Eq:TotalBound}
        \ap \varlimsup_{x\to g}
        \frac {\widetilde d_{cc}(f(g),f(x))} {d_{cc}(g,x)} < \infty.
    \end{equation}
    Then $f$ is approximately differentiable
    almost everywhere in $E$.
\end{theorem}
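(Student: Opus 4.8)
The plan is to reduce the assertion to the Rademacher-type Theorem~\ref{Theorem:RademacherLike} by splitting $E$ into countably many pieces on which $f$ is honestly Lipschitz, and then transferring differentiability on a piece to approximate differentiability on $E$ at its density points. Working on a fixed compact neighborhood (and exhausting $\mathcal{M}$ by countably many of them), I would introduce for integers $s,t$ the set
$$
A_{s,t}=\Bigl\{a\in E : \mathcal{H}^\nu\bigl(E_a^s\cap B_{cc}(a,r)\bigr)\le\eta\,\mathcal{H}^\nu\bigl(B_{cc}(a,r)\bigr)\ \text{for all}\ 0<r<1/t\Bigr\},
$$
where $E_a^s=\{x\in E : \widetilde d_{cc}(f(a),f(x))\ge s\,d_{cc}(a,x)\}$ and $\eta>0$ is a small threshold to be fixed below. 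The set $S=\{(a,x):\widetilde d_{cc}(f(a),f(x))\ge s\,d_{cc}(a,x)\}$ is $\mathcal{H}^\nu\times\mathcal{H}^\nu$-measurable because $f$ is measurable and $d_{cc},\widetilde d_{cc}$ are continuous, so the same Fubini argument as in Property~\ref{Property1} (now applied to $S$ restricted by the diagonal condition $d_{cc}(a,x)\le r$, and using the Ball--Box theorem to replace $\mathcal{H}^\nu(B_{cc}(a,r))$ by a multiple of $r^\nu$) shows that each $A_{s,t}$ is measurable. By the definition of the approximate upper limit, hypothesis \eqref{Eq:TotalBound} guarantees that for almost every $a\in E$ there is an integer $s$ for which $E_a^s$ has density zero at $a$, whence $a\in A_{s,t}$ for some $t$; thus $\bigcup_{s,t}A_{s,t}$ exhausts $E$ up to a null set.

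The key step, and the one I expect to be the main obstacle, is to refine this covering into Lipschitz pieces. Cutting each $A_{s,t}$ into countably many Borel sets of $d_{cc}$-diameter less than $1/(2t)$ produces a countable family $\{A_i\}$; I claim $f|_{A_i}$ is Lipschitz once $\eta$ is chosen small in terms of the doubling constant of $\mathcal{H}^\nu$. Indeed, for $a,b\in A_i$ put $r=d_{cc}(a,b)$ and $B=B_{cc}(a,r)$. Since $B\subset B_{cc}(b,2r)$ and $2r<1/t$, the doubling property lets me bound both $\mathcal{H}^\nu(E_a^s\cap B)$ and $\mathcal{H}^\nu(E_b^s\cap B)$ by a fixed multiple of $\eta\,\mathcal{H}^\nu(B)$; choosing $\eta$ so that this multiple is below $1/2$ gives $\mathcal{H}^\nu(E_a^s\cap B)+\mathcal{H}^\nu(E_b^s\cap B)<\mathcal{H}^\nu(B)$, so there is a point $x\in B\setminus(E_a^s\cup E_b^s)$ distinct from $a,b$. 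For such an $x$ the triangle inequality for $\widetilde d_{cc}$ yields
$$
\widetilde d_{cc}(f(a),f(b))\le s\bigl(d_{cc}(a,x)+d_{cc}(b,x)\bigr)\le 3s\,d_{cc}(a,b),
$$
so $f|_{A_i}$ is Lipschitz with constant $3s$. The delicate points here are precisely the measurability of $A_{s,t}$ and keeping the doubling and Ball--Box constants uniform, which is why the whole construction is carried out locally and then patched together.

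Finally, Theorem~\ref{Theorem:RademacherLike} applies to each Lipschitz restriction $f|_{A_i}$ and gives a differential $L=D_a(f|_{A_i})$ at $\mathcal{H}^\nu$-almost every $a\in A_i$; since $\mathcal{H}^\nu$ is doubling, almost every such $a$ is moreover a density point of $A_i$. At a point $a$ where both properties hold, differentiability means $\widetilde d_{cc}^{f(a)}(f(v),L(v))=o(d_{cc}^a(a,v))$ as $A_i\cap\mathcal{G}^a\ni v\to a$, so for each $\varepsilon>0$ the set $\{v\in B_{cc}(a,r)\cap\mathcal{G}^a : \widetilde d_{cc}^{f(a)}(f(v),L(v))>\varepsilon\,d_{cc}^a(a,v)\}$ is contained in $B_{cc}(a,r)\setminus A_i$ for all sufficiently small $r$. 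As $a$ is a density point of $A_i$, the latter set has density zero at $a$, and therefore $L$ is the approximate differential of $f$ at $a$. Since the $A_i$ cover $E$ up to measure zero, $f$ is approximately differentiable almost everywhere in $E$.
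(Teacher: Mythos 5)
There is a genuine gap in your key step, at the point where you select $x\in B\setminus(E_a^s\cup E_b^s)$ and then apply the triangle inequality through $f(x)$. Since $E_a^s$ and $E_b^s$ are by definition subsets of $E$, the complement $B\setminus(E_a^s\cup E_b^s)$ contains all of $B\setminus E$, where $f$ is not defined; your measure estimate $\mathcal{H}^\nu(E_a^s\cap B)+\mathcal{H}^\nu(E_b^s\cap B)<\mathcal{H}^\nu(B)$ therefore does not produce a point of $E$ at which both good inequalities hold. Nothing in the definition of $A_{s,t}$ forces $\mathcal{H}^\nu(E\cap B_{cc}(a,r))$ to be a definite fraction of $\mathcal{H}^\nu(B_{cc}(a,r))$ at the one scale that matters, $r=d_{cc}(a,b)$: almost every $a\in E$ is a density point of $E$, but that convergence is not uniform in $a$, and the Lipschitz estimate must hold for \emph{all} pairs $a,b\in A_i$ simultaneously. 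So as written, the step ``choose $x$ and apply the triangle inequality'' can fail whenever $E$ is thin in $B$ at that scale, and $f|_{A_i}$ need not be Lipschitz.

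The fix is exactly the device used in the paper's Theorem~\ref{Theorem:SetFamilyLipschitz}: enlarge the exceptional set so that it also contains all points \emph{outside} $E$ (more precisely, outside the set $E^m$ of density points of $E$ in the relevant piece), i.e.\ replace $E_a^s$ by $Q(a,r)=B_{cc}(a,r)\cap\{x:\ x\notin E^m \text{ or } \widetilde d_{cc}(f(a),f(x))> s\,d_{cc}(a,x)\}$, and build the smallness of $\mathcal{H}^\nu(Q(a,r))$ for all $r<1/t$ into the definition of your measurable sets (this is again legitimate by the Property~\ref{Property1} argument, and these sets still exhaust $E$ up to a null set by the density theorem). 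With that amendment your argument closes, and it is then essentially the paper's proof: the same countable Lipschitz decomposition, followed by Theorem~\ref{Theorem:RademacherLike} on each piece and the passage to approximate differentiability at density points, which you carry out correctly. Your only real variation is quantitative and harmless: you choose the auxiliary point in a single ball $B_{cc}(a,r)$ using the doubling property (Lipschitz constant $3s$, with the threshold $\eta$ tied to the doubling constant), whereas the paper chooses it in $B_{cc}(u,r)\cap B_{cc}(v,r)$, whose measure it bounds below by the constant $\gamma_m$ obtained from a near-midpoint of an almost minimizing horizontal path together with the Ball--Box theorem (Lipschitz constant $2k$).
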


For proving Theorem~\ref{Theorem:ApprStepanoffLike} we need the
following statement.
\begin{theorem} \label{Theorem:SetFamilyLipschitz}
    Let $E$ be a measurable subset in $\mathcal{M}$ and
    $f:E \to \widetilde{\mathcal{M}}$ be a measurable mapping enjoying
    \eqref{Eq:TotalBound} for all points $g \in E$. Then there is
    a sequence of disjoint sets $E_0, E_1, \dots$, such that
    $E = E_0 \cup \bigcup\limits_{i=1}^{\infty} E_i$, $\mathcal{H}^\nu(E_0) = 0$
    and every restriction $f|_{E_i}$, $i=1,2,\dots$, is a Lipschitz mapping.
\end{theorem}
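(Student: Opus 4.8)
The plan is to reproduce Federer's Euclidean decomposition, the crux being to upgrade the one-sided pointwise bound \eqref{Eq:TotalBound} into a genuine two-point Lipschitz estimate on suitably chosen pieces. First I would stratify $E$ according to the value of the bound: for $j \in \mathbb{N}$ set
$$
  E^j = \Bigl\{ g \in E : \ap\varlimsup_{x \to g}
  \frac{\widetilde d_{cc}(f(g),f(x))}{d_{cc}(g,x)} < j \Bigr\}.
$$
Since the approximate upper limit is finite at every $g \in E$, we have $E = \bigcup_{j} E^j$. By the definition of the approximate upper limit, for every $g \in E^j$ the ``bad set''
$$
  Z^j_g = \{ x \in E : \widetilde d_{cc}(f(g),f(x)) > j\, d_{cc}(g,x) \}
$$
has $\mathcal{H}^\nu$-density zero at $g$; equivalently, on $E \setminus Z^j_g$ the point $g$ serves as a Lipschitz centre with constant $j$.

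Next I would make the decay uniform. Fixing a fraction $\eta \in (0,1)$ to be specified later (depending only on the doubling constant of $\mathcal{H}^\nu$ and the quasimetric constant of $d_{cc}$), for $j,k \in \mathbb{N}$ I put
$$
  E^{j}_{k} = \Bigl\{ g \in E^j :
  \mathcal{H}^\nu\bigl(Z^j_g \cap B_{cc}(g,r)\bigr)
  \le \eta\, \mathcal{H}^\nu\bigl(B_{cc}(g,r)\bigr)
  \text{ for all } 0 < r < \tfrac1k \Bigr\}.
$$
Because the density of $Z^j_g$ at $g$ is zero, each $g \in E^j$ lies in some $E^j_k$, so $E = \bigcup_{j,k} E^j_k$. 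These sets are $\mathcal{H}^\nu$-measurable: applying the Tonelli--Fubini argument of Property~\ref{Property1} to the product set $\{(g,x) : \widetilde d_{cc}(f(g),f(x)) > j\, d_{cc}(g,x)\}$ intersected with the tube $\{d_{cc}(g,x) \le r\}$ shows $g \mapsto \mathcal{H}^\nu(Z^j_g \cap B_{cc}(g,r))$ is measurable, after which one intersects over rational $r < 1/k$ exactly as in Property~\ref{Property2}. Discarding the $\mathcal{H}^\nu$-null set of points that are not density points of $E$ (Lebesgue density theorem, valid by the doubling property of $\mathcal{H}^\nu$), I may also assume every point of each $E^j_k$ is a density point of $E$.

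I would then partition each $E^j_k$ into countably many pieces of small diameter, intersecting it with a countable cover of $\mathcal{M}$ by sets of $d_{cc}$-diameter less than $c/k$, where $c$ is chosen so small that for any two points $x,y$ of one piece the ball $B_{cc}(x, C d_{cc}(x,y))$ still has radius $< 1/k$ (here $C$ is a fixed multiple of the quasimetric constant from Proposition~\ref{propd1} and Theorem~\ref{Th:CmpDinftyAndDcc}). On such a piece $Q$ I claim $f|_Q$ is Lipschitz. Given $x,y \in Q$ with $\rho = d_{cc}(x,y)$, consider $B = B_{cc}(x, C\rho)$. Using the quasi-triangle inequality to compare the centres $x,y$ and the doubling property to compare radii, the density bounds at $x$ and at $y$ give $\mathcal{H}^\nu((Z^j_x \cup Z^j_y) \cap B) < \mathcal{H}^\nu(E \cap B)$ once $\eta$ is fixed small enough; since $x$ is a density point of $E$, the right-hand side is positive, so there exists $z \in E \cap B$ with $z \notin Z^j_x \cup Z^j_y$. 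For this $z$ we have $\widetilde d_{cc}(f(x),f(z)) \le j\, d_{cc}(x,z)$ and $\widetilde d_{cc}(f(y),f(z)) \le j\, d_{cc}(y,z)$, with both $d_{cc}(x,z)$ and $d_{cc}(y,z)$ bounded by a fixed multiple of $\rho$. The generalized triangle inequality for $\widetilde d_{cc}$ then yields $\widetilde d_{cc}(f(x),f(y)) \le C' j\, d_{cc}(x,y)$, so $f|_Q$ is Lipschitz.

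Relabelling the resulting countable family of pieces as $E_1, E_2, \dots$, disjointified in the obvious way, and taking $E_0$ to be the discarded $\mathcal{H}^\nu$-null set of non-density points, gives the asserted decomposition. The main obstacle is precisely the common-good-point step: turning the one-sided, pointwise, density-type control of \eqref{Eq:TotalBound} into a symmetric estimate between two arbitrary points forces the threshold $\eta$ to be balanced carefully against the doubling constant of $\mathcal{H}^\nu$ and the quasimetric constant of $d_{cc}$ (the latter entering when the density bound at $y$ is transferred to a ball centred at $x$), and requires every auxiliary ball to stay within the common scale $1/k$ on which those bounds are valid — this is exactly what the smallness of the pieces' diameter secures.
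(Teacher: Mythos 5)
Your overall architecture is the paper's: stratify $E$ by the Lipschitz constant $j$ and the validity scale $1/k$, get measurability by the Tonelli--Fubini argument of Property~\ref{Property1}, chop into pieces of small diameter, and close via a common good point serving two centres. Your replacement of the paper's two-ball intersection bound (which the paper obtains from a near-midpoint of an almost length-minimizing horizontal path together with the Ball--Box theorem, yielding the constant $\gamma_m \ge C_1 2^{-\nu}$) by a single ball $B = B_{cc}(x, C\rho)$ plus the doubling property is a legitimate simplification; note also that $d_{cc}$ is a genuine metric, so transferring the bound at $y$ to $B$ needs only the ordinary triangle inequality and doubling, not the quasimetric constant of Proposition~\ref{propd1}. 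However, there is a genuine gap exactly at the step you flag as the crux: the inequality $\mathcal{H}^\nu\bigl((Z^j_x \cup Z^j_y) \cap B\bigr) < \mathcal{H}^\nu(E \cap B)$ does \emph{not} follow ``once $\eta$ is fixed small enough.'' Your stratification controls the bad sets only relative to the \emph{full} ball measure, giving $\mathcal{H}^\nu\bigl((Z^j_x \cup Z^j_y)\cap B\bigr) \le C''\eta\,\mathcal{H}^\nu(B)$; to beat this you need a lower bound $\mathcal{H}^\nu(E \cap B) \ge c\,\mathcal{H}^\nu(B)$ at the \emph{specific} scale $C\rho$ with $\rho = d_{cc}(x,y)$, uniformly over all pairs in the piece $Q$. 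Being a density point of $E$ is an asymptotic statement: it yields $\mathcal{H}^\nu(E\cap B_{cc}(x,r)) \ge (1-\epsilon)\,\mathcal{H}^\nu(B_{cc}(x,r))$ only for $r$ below a threshold depending on $x$, while $\rho$ is dictated by the pair and does not shrink. The set $E$ can have a ``hole'' near $x$ at scale comparable to $d_{cc}(x,y)$ even though $x$ and $y$ are both density points, so for some pairs the ratio $\mathcal{H}^\nu(E\cap B)/\mathcal{H}^\nu(B)$ is smaller than any preassigned $\eta$; the mere positivity of $\mathcal{H}^\nu(E\cap B)$ that you invoke does not exceed the bad-set measure.

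The repair is precisely the paper's device, and it slots into your scheme verbatim: build the scale-uniform lower density of $E$ into the measurable stratification by enlarging the bad set to include the complement of $E$ (more precisely, of its density-point subset). In the paper, $Q^m_k(u,r)$ consists of the points $x$ with $x \notin E^m$ \emph{or} $\widetilde d_{cc}(f(x),f(u)) > k\, d_{cc}(x,u)$, and membership in $B^m_k$ demands $\mathcal{H}^\nu(Q^m_k(u,r)) < \gamma_m r^\nu/2$ for \emph{all} $r < \min\{k^{-1}, m^{-1}\}$. In your notation: require in the definition of $E^j_k$ that
$$
  \mathcal{H}^\nu\Bigl( \bigl( Z^j_g \cup (B_{cc}(g,r)\setminus E) \bigr) \cap B_{cc}(g,r) \Bigr)
  \le \eta\, \mathcal{H}^\nu\bigl(B_{cc}(g,r)\bigr)
  \quad \text{for all } 0 < r < k^{-1}.
$$
Since at a density point of $E$ the complement $B_{cc}(g,r)\setminus E$ also has relative measure tending to zero, every point of $E^j$ (off a null set) still lands in some $E^j_k$; measurability persists by the same Fubini argument; and now $\mathcal{H}^\nu(E \cap B) \ge (1-\eta)\,\mathcal{H}^\nu(B)$ holds at every admissible scale, so fixing $\eta$ against the doubling constant (e.g. $C''\eta + \eta < 1$) produces the common good point $z \in E \cap B \setminus (Z^j_x \cup Z^j_y)$ and the estimate $\widetilde d_{cc}(f(x),f(y)) \le C' j\, d_{cc}(x,y)$ as you intended. (A minor additional point: using $\eta\,\mathcal{H}^\nu(B_{cc}(g,r))$ as the threshold requires measurability of $g \mapsto \mathcal{H}^\nu(B_{cc}(g,r))$; the paper sidesteps this by using thresholds of the form $\mathrm{const}\cdot r^\nu$, which is equivalent here by the Ball--Box theorem, Theorem~\ref{Th:CmpDinftyAndDcc}.)
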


\begin{proof}
Since our considerations are local, we limit our arguments to the case
when $E \subset U$ where $U$ is an open subset in $\mathcal M$.
Consider a sequence of 
sets
$$
	U_m = \{ x \in U : d_{cc}(x , \partial U ) \geq 2 m^{-1} \},
	\quad m \in \mathbb{N}.
$$

Each $U_m$ is closed and $\bigcup\limits_{m=1}^{\infty} U_m = U$.
For all distinct points $u$ and $v$ of $U$ the relation
$$
   h(u,v) = \frac{ \mathcal{H}^\nu( B_{cc}(u,d_{cc}(u, v)) \cap B_{cc}(v, d_{cc}(u, v)) ) }
     { d_{cc}(u, v)^\nu }, \quad u \ne v,
$$
is a continuous real-valued function. For every $m$ define a constant
$$
	\gamma_m = \inf \{ h(u,v) : u,v \in U_m, \, d_{cc}(u,v) \leq m^{-1} \}.
$$

Let $d_{cc}(u,v) = l$. By definition of $d_{cc}$ for an arbitrary $\varepsilon > 0$
there exists piecewise smooth path $\gamma : [0, l + \varepsilon] \to \mathcal{M}$
such that $\gamma(0) = u$, $\gamma(l + \varepsilon) = v$ and $|\dot\gamma| \leq 1$.
Let $w = \gamma(\frac{l+\varepsilon}{2})$.
Then $d_{cc}(u,w) \leq \frac{l+\varepsilon}{2}$ and
$d_{cc}(v,w) \leq \frac{l+\varepsilon}{2}$. Consequently,
$B_{cc}(w, \frac {l-\varepsilon}{2}) \subset B_{cc}(u,l)$ and
$B_{cc}(w, \frac {l-\varepsilon}{2}) \subset B_{cc}(v,l)$.
Hence,
$$
  h(u,v) \geq \frac
  {\mathcal{H}^\nu \bigl( B_{cc}(w, \frac {l-\varepsilon}{2}) \bigr)}
  {l^\nu}
  \geq \frac { C_1 (\frac {l-\varepsilon}{2})^\nu } {l^\nu} > 0,
$$
where $C_1 > 0$ is a constant from Ball--Box theorem.
Since $\varepsilon > 0$ is arbitrary, we infer
$\gamma_m \geq C_1 2^{-\nu} > 0$.

For every $m \in \mathbb{N}$ let $E^m$ be a set of all density points of
$E \cap (U_m \setminus U_{m-1} )$ (assuming $U_0 = \emptyset$).
The sequence of $E^m$ is a disjoint family and
$\mathcal{H}^\nu (E \setminus \bigcup\limits_{m=1}^{\infty} E_m ) = 0$.

For $k\in \mathbb N$,  $u \in E$, $0 < r < m^{-1}$ define
$$
    Q_k^m(u, r) = B_{cc}(u, r) \cap \{ x: x \not\in E^m \text{~or~}
        \widetilde d_{cc}( f(x), f(u) ) > k \, d_{cc}(x, u) \}
$$
and also define
$$
    B_k^m = E \cap \Bigl\{ u : \mathcal{H}^\nu( Q_k^m(u, r) ) < \gamma_m \frac{r^\nu}{2}
        \text{~for~all~} 0 < r < \min \{k^{-1}, m^{-1}\} \Bigr\}.
$$
By Property~\ref{Property1} all $B_k^m$ are measurable and
$E^m = \bigcup\limits_{k=1}^{\infty} B_k^m$.
Next, if $u,v \in B_k^m$ and $r = d_{cc}(u, v) < \min \{ k^{-1}, m^{-1} \}$ we have
$$
    \mathcal{H}^\nu( Q_k^m(u, r) \cup Q_k^m(v, r) ) < \gamma_m r^\nu
    	\leq \mathcal{H}^\nu( B_{cc}(u, r) \cap B_{cc}(v, r) ).
$$
Hence we can choose a point
$$
    x \in  (B_{cc}(u, r) \cap B_{cc}(v, r)) \setminus (Q_k^m(u, r) \cup Q_k^m(v, r)).
$$
For that point
\begin{align*}
    \widetilde d_{cc}(f(u),f(v))
    & \leq \widetilde d_{cc}(f(u),f(x)) + \widetilde d_{cc}(f(x),f(v)) \\
    & \leq k d_{cc}(u,x) + k d_{cc}(x,v)
    \leq 2 kr = 2 k d_{cc}(u, v).
\end {align*}
Consequently, representing $B_k^m$ as union of countable family
of measurable sets $B_{k,j}^m$, whose diameters are less than
$\min \{ k^{-1}, m^{-1} \}$, we see that every restriction
$f|_{B_{k,j}^m}$ is a Lipschitz mapping.
\end{proof}

\begin{proof}[Proof of Theorem~\ref{Theorem:ApprStepanoffLike}]
By Theorem~\ref{Theorem:SetFamilyLipschitz} the domain of $f$
is an union of countable family of disjoint sets $E_i$ such that
every $f|_{E_i}$ is a Lipschitz mapping (up to the set of
measure $0$). By Theorem~\ref{Theorem:RademacherLike} every
$f|_{E_i}$ is differentiable almost everywhere in $E_i$.
For the density points of $E_i$ this is equivalent to approximate
differentiability in $E$.
\end{proof}

% === Section 3 ===
%\newpage
\section{Theorem on approximate differentiability}

Now we have all necessary tools for formulating and proving the main result.
\begin{theorem} \label{Theorem:WorkResultDouble}
    Let $E \subset \mathcal{M}$ be a measurable subset of the
    Carnot--Carath\'eodory space $\mathcal{M}$ and let
    $f : E \to \widetilde{\mathcal{M}}$ be a measurable mapping.
    The following statements are equivalent:

\noindent $\quad 1)$ The mapping $f$ is approximately differentiable
        almost everywhere in $E$.

\noindent $\quad 2)$ The mapping $f$ has approximate derivatives
        $\ap X_j f$ along the basic
        horizontal vector fields $X_1,\dots,X_{\dim H_1}$
        almost everywhere in $E$.

\noindent $\quad 3)$ There is a sequence of disjoint sets
        $Q_1, Q_2, \dots$ such that 
        $\mathcal{H}^\nu(E \setminus \bigcup\limits_{i=1}^{\infty} Q_i) = 0$ and
        every restriction $f|_{Q_i}$ is a Lipschitz mapping.
\end{theorem}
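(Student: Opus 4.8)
The plan is to prove the cyclic chain of implications, establishing $1)\Leftrightarrow 2)$ and $1)\Leftrightarrow 3)$ by treating three of the four implications as routine and concentrating the real work on the single hard direction $2)\Rightarrow 1)$. For the routine part: to see $1)\Rightarrow 3)$, I note that at any density point $g$ where the approximate differential $\ap D_g f$ exists, the homomorphism $L=\ap D_g f$ furnishes a linear majorant for the difference quotient, so that condition \eqref{Eq:TotalBound} holds at $g$; applying Theorem~\ref{Theorem:SetFamilyLipschitz} then produces the disjoint Lipschitz decomposition of $3)$. For $3)\Rightarrow 1)$ I would repeat the argument of the proof of Theorem~\ref{Theorem:ApprStepanoffLike}: each restriction $f|_{Q_i}$ is Lipschitz, hence differentiable almost everywhere by Theorem~\ref{Theorem:RademacherLike}, and at a density point of $Q_i$ (which is also a density point of $E$) ordinary differentiability of $f|_{Q_i}$ upgrades to approximate differentiability of $f$ on $E$. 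Finally $1)\Rightarrow 2)$ follows by restricting the approximate differential to the one-parameter horizontal flows $t\mapsto\exp(tX_j)(g)=\delta^g_t\exp(\widehat X^g_j)(g)$ and reading off, via the approximate form of \eqref{Eq:DiffToDeriv}, the approximate derivative $\ap X_j f(g)$; the passage from a density-zero exceptional set in $\mathcal M$ to a density-zero exceptional set along each horizontal curve is most safely handled by routing through the Lipschitz decomposition just established.

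The heart of the matter is $2)\Rightarrow 1)$, which I would carry out by constructing the differential explicitly at almost every $g\in E$, following the scheme of \cite{Bib:Vod-Groups}. First I would upgrade the hypothesis: starting from the approximate derivatives $\ap X_1 f,\dots,\ap X_{\dim H_1}f$ along the basic horizontal fields, I would show that $f$ has approximate derivatives along each composite flow $Y_k$ generating the coordinate map $\Phi_k(t)=\exp(Y_k(t))$ of \eqref{Eq:CoordPhiLine}. Since $\Phi_k$ is a finite composition of horizontal exponentials $\exp(a_{i,k}tX_{i,k})$, this is a composition (chain-rule) step for approximate derivatives, whose only delicate point is to verify that the density-zero exceptional sets of the individual factors combine into a density-zero set along the composite flow for almost every $g$. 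Because the non-horizontal coordinate directions $X_k$, $k>\dim H_1$, are reached precisely through the $\Phi_k$, this step effectively supplies approximate derivatives along every direction of the second-kind coordinate system $\Phi_g$ of \eqref{Eq:CoordPhiSystem}.

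With these derivatives in hand I would, in a second step, assemble a candidate homomorphism $L:\mathcal G^g\to\mathcal G^{f(g)}$: for a point $v=\Phi_g(t_1,\dots,t_N)$ near $g$, I define $L(v)$ by executing in $\mathcal G^{f(g)}$ the composition of dilated approximate-derivative flows dictated by the same coordinates $(t_1,\dots,t_N)$. I would then check that $L$ is well defined, commutes with the dilations $\delta^g_t$ and $\tilde\delta^{f(g)}_t$ in the sense of \eqref{Eq:DiffCommutes}, and is a horizontal homomorphism of local Carnot groups. The third and final step is to prove that this $L$ is genuinely the approximate differential, i.e.\ that $\ap\lim_{v\to g}\widetilde d^{f(g)}_{cc}(f(v),L(v))/d^g_{cc}(g,v)=0$. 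Here one represents $v$ through $\Phi_g$, telescopes the discrepancy $\widetilde d^{f(g)}_{cc}(f(v),L(v))$ along the $N$ successive coordinate factors, estimates each increment by the corresponding approximate derivative from the first step, and controls the accumulated error using the metric comparison results of \cite{Bib:Karm-Vod1} (Theorems~\ref{Th:PathsEstimate} and \ref{Theorem:MetricsDeviation}) together with the local approximation theorem (Theorem~\ref{Th:LocApprForDcc}) to move freely between $d_{cc}$, $d_\infty$ and the tangent-cone metric $d^g_{cc}$.

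I expect the main obstacle to be this last step, and within it the density bookkeeping already foreshadowed in the first step. Approximate derivatives only furnish density-one control along individual integral curves, whereas genuine approximate differentiability is an $N$-dimensional statement; the difficulty is to show that the union of the exceptional sets arising from the $N$ successive coordinate flows still has $\mathcal H^\nu$-density zero at $g$ for almost every $g$. I would address this by a Fubini-type argument across the coordinate system $\Phi_g$, using the uniformity (in $g$ over a compact set) of the $o(\cdot)$-estimates in Theorems~\ref{Th:PathsEstimate} and \ref{Theorem:MetricsDeviation} so that the errors incurred in passing between the sub-Riemannian geometry and that of the nilpotent tangent cone do not destroy the density-zero property when summed over the finitely many factors.
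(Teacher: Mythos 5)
Your overall architecture coincides with the paper's: $1)\Rightarrow 3)$ via the approximate-limsup bound and Theorem~\ref{Theorem:SetFamilyLipschitz}; statement $2)$ recovered from the Lipschitz pieces through the curve statement (Corollary~\ref{Corollary:LipschitzDerives}); and for $2)\Rightarrow 1)$ the same three-stage plan as Theorem~\ref{Theorem:Main} --- approximate derivatives along the composite flows $\Gamma_k$ (Lemma~\ref{Lemma:DerivativesAlongCurves}), the candidate map $L_g$ of \eqref{Eq:LMapping} built on the special second-kind coordinates \eqref{Eq:CoordPhiSystem}, and the telescoping, Fubini--Egorov--Luzin proof of the approximate limit \eqref{Eq:LDiffers} (Lemma~\ref{Lemma:CoordinateDifferentiability}). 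Your additional implications $3)\Rightarrow 1)$ (via Theorem~\ref{Theorem:RademacherLike}) and $1)\Rightarrow 2)$ are sound but redundant once the cycle $1)\Rightarrow 3)\Rightarrow 2)\Rightarrow 1)$ is closed; note also that the paper deliberately avoids invoking the full Rademacher-type theorem in its cycle, which is what lets it later offer an alternative proof of Theorem~\ref{Theorem:StepanoffLike}.

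The one genuine gap is in your second step, where you say you would ``check that $L$ is well defined, commutes with the dilations \dots and is a horizontal homomorphism of local Carnot groups.'' Well-definedness, dilation-equivariance and continuity are indeed immediate from \eqref{Eq:LMapping} (Properties~\ref{PropertyL0}--\ref{PropertyL3}), but the homomorphism identity $L_g(\hat u \cdot \hat v) = L_g(\hat u)\cdot L_g(\hat v)$ is not a routine verification, and your proposal contains no mechanism for it. $L_g$ is defined coordinatewise through $\widehat\Phi_g$, while the group law of $\mathcal{G}^g$ mixes second-kind coordinates nonlinearly (Baker--Campbell--Hausdorff), so nothing in the construction forces the identity; and without it the approximate limit \eqref{Eq:LDiffers} does not certify approximate differentiability, since the definition \eqref{Eq:Differential} requires the approximating map to be a horizontal homomorphism. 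The paper devotes Steps~2--3 of the proof of Theorem~\ref{Theorem:Main} to exactly this point, and the argument must pass through $f$ itself: because $g$ is a density point of $E$, the chains of horizontal exponentials realizing $\delta^g_t \hat u$, $\delta^g_t \hat v$ and $\delta^g_t(\hat u\cdot\hat v)$ (compared via Theorem~\ref{Th:PathsEstimate}) can be shadowed by points $w'_k(t)\in E$ within $o(t)$; the approximate limits along the curves are upgraded to \emph{uniform} limits on the Lipschitz pieces by Lemma~\ref{Lemma:ApprDiffForLipschitz} --- which is precisely why the bound \eqref{TopCondition} and Theorem~\ref{Theorem:SetFamilyLipschitz} are invoked \emph{before} the homomorphism step --- and, after Luzin/Egorov reductions, one compares $f(w'_{2N}(t))$ with both $\tilde\delta^{f(g)}_t L_g(\hat u\cdot\hat v)$ and $\tilde\delta^{f(g)}_t L_g(\hat u)\cdot\tilde\delta^{f(g)}_t L_g(\hat v)$, obtaining a discrepancy $o(t)$ and hence equality of the two group elements. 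Absent some such evaluation of $f$ along chains of points of $E$, your plan establishes only that $f$ admits a first-order expansion by the continuous homogeneous map $L_g$, not that $L_g$ is a differential in the sense of the theorem; you should add this step, for which Lemma~\ref{Lemma:ApprDiffForLipschitz} is the key auxiliary tool your outline never mentions.
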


\begin{proof}[Proof of the implication $1) \Rightarrow 3)$]
Let $g \in \mathcal{M}$ be a density point of $E$ and
Let $f$ be approximately differentiable in $g$.
Fix a point $v$ in a set
$$
	C_\varepsilon(g) = \{ v \in B_{cc}(g,r_g) \cap \mathcal{G}^g
	  : \widetilde d_{cc} (f(v), \ap D_g f(v)) < \varepsilon d_{cc}(g,v) \},
	\quad \varepsilon > 0.
$$
By Theorem~\ref{Th:LocApprForDcc} we have
\begin{align*}
	\widetilde d^{f(g)}_{cc} (f(v), \ap D_g f(v))
	& \leq \widetilde d_{cc} (f(v), \ap D_g f(v)) [1 + o(1)] \\
	& < d_{cc}(v,g) [\varepsilon + o(\varepsilon)]
	  = d^g_{cc}(v,g) [\varepsilon + o(\varepsilon)].
\end{align*}
From the definition of an approximate differential it follows that
$\mathcal{H}^\nu$-density of the set $B_{cc}(g,r_g) \setminus C_\varepsilon(g)$
equals zero for any $\varepsilon > 0$. In other words
$$
	\ap \lim_{v \to g}
	\frac {\widetilde d_{cc}(f(v), D_g f(v))} {d_{cc}(g,v)} = 0.
$$

Therefore,
\begin{align*}
    & \ap \varlimsup_{v \to g} \frac {\widetilde d_{cc}(f(g), f(v))} {d_{cc}(g,v)} \\
    & \quad \leq \ap \varlimsup_{v \to g} \frac {\widetilde d_{cc}(f(g), D_g f(v))} {d_{cc}(g,v)}
      + \ap \varlimsup_{v \to g} \frac {\widetilde d_{cc} (D_g f(v), f(v))} {d_{cc}(g,v)} \\
    & \quad = \varlimsup_{v \to g} \frac {\widetilde d_{cc}(f(g), D_g f(v))} {d^g_{cc}(g,v)} + 0 \\
    & \quad \leq \varlimsup_{v \to g} \frac {\widetilde d^{f(g)}_{cc}(f(g), D_g f(v))[1+o(1)]} {d^g_{cc}(g,v)} \\
    & \quad = [1 + o(1)] \sup_{v : \: d^g_{cc}(v,g) = 1} \widetilde d^{f(g)}_{cc}(f(g), D_g f(v)) < \infty
\end{align*}
for almost all $g \in E$.
Hence, the conditions of Theorem~\ref{Theorem:SetFamilyLipschitz} are fulfilled.
\end{proof}

The implication $3) \Rightarrow 2)$ is proved as
Corollary~\ref{Corollary:LipschitzDerives} in the next subsection.

The implication $2) \Rightarrow 1)$ is a direct corollary of the
following crucial
\begin{theorem} \label{Theorem:Main}
    Let $f : \mathcal{M} \to \widetilde{\mathcal{M}}$ be a measurable mapping of
    Carnot--Carath\'eodory spaces. Then
    \begin{align*}
        & A_j = \dom \ap X_j f \text{~is a measurable set}, \\
        & \ap X_j f : A_j \to \widetilde\exp(H \widetilde{\mathcal M})
            \text{~is a measurable mapping in~} A_j,
    \end{align*}
    for all $j = 1,\dots,\dim H_1$, and $f$ is approximately
    differentiable almost everywhere on the set
    $A = \bigcap\limits_{j=1}^{\dim H_1} A_j$. Moreover, if
    $g \in A$ is a point of an approximate differentiability of
    the mapping $f$ and in the neighborhood of $g$ we have representation
    from Theorem~$\ref{Theorem:CoordSys}$
    $$
        v = \exp (a_L X_{j_L}) \circ \dots \circ \exp (a_1 X_{j_1}) (g)
    $$
    where $1 \leq j_i \leq \dim H_1$, $i=1,\dots,L$,
    $L \in \mathbb{N}$, then
    $$
        \ap D_gf (v) = \prod_{i=1}^{L}
            \delta^{f(g)}_{a_i} \ap X_{j_i} f(g) \in \mathcal{G}^{f(g)}.
    $$
\end{theorem}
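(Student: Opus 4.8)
The plan is to treat the three assertions in turn, deferring the bulk of the work to the approximate differentiability statement and its explicit formula.

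\emph{Measurability.} For fixed $j$, the approximate derivative $\ap X_j f(g)$ is by definition the approximate limit as $t \to 0$ of the horizontal element reconstructed from $f(\exp(t X_j)(g))$ through the chart $\theta_{f(g)}^{-1}$; written in the moving frame $\{\widehat X^{f(g)}_i\}$ at $f(g)$, this element has coordinate functions that depend measurably on $(g,t)$, since $f$ is measurable and the flow $(g,t)\mapsto \exp(tX_j)(g)$ is $C^1$. Applying Property~\ref{Property2} to each coordinate function shows that the approximate upper and lower limits in $t$ are $\mathcal{H}^\nu$-measurable functions of $g$; the domain $A_j=\dom\ap X_j f$ is the set where they coincide and are finite, hence measurable, and on $A_j$ the coordinates of $\ap X_j f$ are measurable, which is the asserted measurability of $\ap X_j f : A_j \to \widetilde\exp(H\widetilde{\mathcal M})$.

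\emph{From horizontal derivatives to derivatives along the composite fields.} Fix a density point $g \in A$ at which every $\ap X_j f(g)$ exists. The first substantial step is to show that $f$ also has approximate derivatives along each composite coordinate map $\Phi_k$ of \eqref{Eq:CoordPhiLine}, with value obtained by composing the one-parameter horizontal subgroups generated by the $\ap X_j f(g)$ according to the elementary flows making up $\Phi_k$. Because approximate derivatives do not chain automatically, I would argue by a Fubini/density argument in the special coordinates $(t_1,\dots,t_N)\mapsto \Phi_g(t_1,\dots,t_N)$: along $\mathcal{H}^1$-almost every integral line of the first horizontal field the endpoint lands in a full-density subset of $A$ where the next approximate-derivative condition is available, and iterating over the $L$ elementary flows constituting $\Phi_k$ and integrating with respect to $\Phi_g$ shows that the composite approximate derivative exists off a set of density zero, with the expected product value.

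\emph{The candidate homomorphism and the final estimate.} Using the representation $v=\exp(a_L X_{j_L})\circ\dots\circ\exp(a_1 X_{j_1})(g)$ from Theorem~\ref{Theorem:CoordSys}, I set $L(v) = \prod_{i=1}^L \delta^{f(g)}_{a_i}\ap X_{j_i}f(g) \in \mathcal{G}^{f(g)}$. Each factor is the value of a one-parameter horizontal subgroup, so $L$ maps into $\mathcal{G}^{f(g)}$ and, by construction, commutes with the dilations as in \eqref{Eq:DiffCommutes}; this dilation-equivariance together with the horizontality of the image directions identifies $L$ as a horizontal homomorphism of the local Carnot groups. It then remains to verify the defining approximate-limit estimate, for which I would telescope $\widetilde d^{f(g)}_{cc}(f(v),L(v))$ along the intermediate points $w_0=g$, $w_i=\exp(a_i X_{j_i})(w_{i-1})$, bounding the error introduced at the $i$-th elementary flow by the composite approximate-derivative information from the previous step and by the metric-deviation estimates of Theorems~\ref{Th:PathsEstimate}, \ref{Theorem:MetricsDeviation} and \ref{Th:LocApprForDcc}, which convert between the initial metric $d_{cc}$ and the tangent-cone metric $d^g_{cc}$.

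\emph{The main obstacle.} The crux is the passage from the one-dimensional approximate-derivative hypotheses (each a density-zero statement along a single integral curve) to the $N$-dimensional conclusion that the bad set $\{v : \widetilde d^{f(g)}_{cc}(f(v),L(v)) > \varepsilon\, d^g_{cc}(g,v)\}$ has $\mathcal{H}^\nu$-density zero at $g$. This is delicate precisely because $d_{cc}$ and $d^g_{cc}$ are only asymptotically, not bi-Lipschitz, comparable, so the $o(\cdot)$ terms arising at each of the $L$ composition steps must be controlled \emph{uniformly} in the base point and in the intermediate parameters; here the uniformity built into Theorems~\ref{Th:PathsEstimate} and \ref{Theorem:MetricsDeviation}, together with the doubling property of $\mathcal{H}^\nu$, is what allows the fiberwise density-zero sets to be assembled into a genuinely $\nu$-dimensional density-zero exceptional set, yielding $\ap D_g f = L$ with the stated product formula.
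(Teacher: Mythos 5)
The fatal step is your third paragraph: the claim that dilation-equivariance together with horizontality of the image directions ``identifies $L$ as a horizontal homomorphism of the local Carnot groups.'' This is false, and it skips what is in fact the hardest part of the theorem. A map commuting with dilations need not be multiplicative: already in the abelian case $\mathbb{R}^2 \to \mathbb{R}$ the map $(x,y) \mapsto (x^3+y^3)^{1/3}$ commutes with the dilations and takes values in the horizontal layer, yet is not a homomorphism. The definition of the approximate differential requires a genuine homomorphism $L : \mathcal{G}^g \to \mathcal{G}^{f(g)}$, i.e. $L(\hat u \cdot \hat v) = L(\hat u) \cdot L(\hat v)$, and this identity \eqref{Eq:LIsHomomorphism} is precisely what encodes the compatibility of the one-dimensional derivatives $\ap X_j f(g)$ with the group relations --- the sub-Riemannian analogue of the nontrivial Euclidean fact that approximate partial derivatives assemble a.e.\ into a \emph{linear} map. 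The paper spends the entire 2nd and 3rd steps of the proof of Theorem~\ref{Theorem:Main} on this point: it first derives from the coordinate estimate that $\ap \varlimsup_{v \to g} \widetilde d_{cc}(f(g),f(v))/d_{cc}(g,v) < \infty$ a.e.\ (inequality \eqref{TopCondition}), applies Theorem~\ref{Theorem:SetFamilyLipschitz} to decompose $A$ into countably many sets on which $f$ is Lipschitz, upgrades the approximate limit \eqref{Eq:LDiffers} to a genuine \emph{uniform} limit on each such piece via Lemma~\ref{Lemma:ApprDiffForLipschitz}, and only then, at a density point, approximates $\delta^g_t \hat u$, $\delta^g_t \hat v$ and $\delta^g_t(\hat u \cdot \hat v)$ by chains of points $w'_k(t)$ lying \emph{inside} the Lipschitz piece, comparing $f(w'_{2N}(t))$ with both $L_g(\delta^g_t[\hat u \cdot \hat v])$ and $\delta^{f(g)}_t L_g(\hat u) \cdot \delta^{f(g)}_t L_g(\hat v)$ after an Egorov--Luzin uniformization. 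None of this machinery appears in your proposal, and without it you have only a dilation-equivariant coordinate-wise candidate, not an approximate differential.

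Your other two blocks are closer to the paper but still imprecise in ways worth flagging. The Fubini/density assembly you sketch is indeed the paper's strategy (Lemmas~\ref{Lemma:DerivativesAlongCurves} and \ref{Lemma:CoordinateDifferentiability}), but the uniformity you invoke does not come from Theorems~\ref{Th:PathsEstimate} and \ref{Theorem:MetricsDeviation}: those are purely geometric statements about the vector fields and make only the metric-comparison $o(\cdot)$ terms uniform. To chain the derivative at the \emph{moving} intermediate point $u_{k-1}(t)$ one needs uniform convergence of the difference quotients of $f$ (Egorov) and uniform continuity of $x \mapsto \ap X_j f(x)$ and $x \mapsto \ap d_{sub}(f \circ \Gamma_k)(x)$ (Luzin); this is exactly how the paper controls the terms $|t_k|\,\omega_k(C_{k-1}(|t_1|+\dots+|t_{k-1}|))$ in the induction, and your sketch misattributes the source of this control. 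Note also that for the nonhorizontal coordinates the bad parameter sets must be measured in $\mathcal{H}^{\deg X_k}$, not $\mathcal{H}^1$, since $\deg X_k > 1$ for $k > \dim H_1$ --- the paper has a separate covering argument converting the $\mathcal{H}^1$ bound on $S^m$ into an $\mathcal{H}^{\deg X_k}$ bound. The measurability block is essentially fine in spirit (Property~\ref{Property2} plus a one-dimensional reduction), though the existence and horizontality of $\ap X_j f(g) \in \widetilde\exp(H\widetilde{\mathcal{M}})$ a.e.\ on $A_j$ requires the Lipschitz decomposition along integral lines and Theorem~\ref{Theorem:DifLipschitz}, as in Lemma~\ref{Lemma:HorizDerivatives}, not just measurability of coordinate functions.
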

We follow the proof in \cite{Bib:Vod-Groups} where the similar result
was established for Carnot groups (which in turn was inspired by the proof \cite{Bib:Federer}
of the similar theorem for mappings of Euclidean spaces).
The essential steps of the proof are
carried out in separate lemmas which are proved below and the proof of the
theorem itself is located in the subsection \ref{Section:MainTheoremProof}
just after proofs of lemmas.

\subsection{Approximate derivatives}
\label{Section:ApproxDerivatives}
\begin{lemma} \label{Lemma:HorizDerivatives}
  Let $E \subset \mathcal{M}$ be a measurable set and
  $f : E \to \widetilde{\mathcal{M}}$
  be a measurable mapping. Then
  \begin{align*}
    & A_j = \{ x \in E : \ap \varlimsup_{t \to 0}
      \frac {\widetilde d_{cc}(\, f(x), \, f(\exp tX_j(x)) \, )} {|t|}
        < \infty \} \text{~is measurable}; \\
    & \ap X_j f : A_j \to \widetilde{\mathcal{M}}
      \text{~is defined almost everywhere and is measurable}; \\
      & \ap X_j f(g) \in \widetilde\exp(H_g \widetilde{\mathcal{M}})
        \text{~for almost all~} g \in A_j
  \end{align*}
  for every $j=1,\dots,\dim H_1$.
\end{lemma}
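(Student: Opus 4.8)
The plan is to prove the three assertions in order, treating the flow parameter $t$ of $\exp tX_j$ as a one-dimensional variable and reducing everything to the one-dimensional differentiation theory of Subsection~\ref{Section:ApproximateDiff} applied along the integral curves of $X_j$. First I would establish joint measurability of
\[
  \Psi(x,t) = \frac{\widetilde d_{cc}(f(x),\, f(\exp tX_j(x)))}{|t|}
\]
on $E\times(\mathbb{R}\setminus\{0\})$. Since $X_j$ is $C^1$, the flow $(x,t)\mapsto\exp tX_j(x)$ is $C^1$, hence Borel, and for each fixed $t$ it is a bi-Lipschitz diffeomorphism preserving $\mathcal{H}^\nu$-null sets. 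Replacing $f$ by a Borel representative, enclosing the exceptional null set in a Borel null set, and applying Fubini shows that $(x,t)\mapsto f(\exp tX_j(x))$ agrees off an $\mathcal{H}^\nu\times\mathcal{L}^1$-null set with a Borel map, hence is measurable; composing with the continuous $\widetilde d_{cc}$ and dividing by $|t|$ makes $\Psi$ measurable. The argument proving Property~\ref{Property2}, read verbatim with $\widetilde{\mathcal{M}}$ replaced by $\mathbb{R}$ and $z_0=0$, then shows $x\mapsto\ap\varlimsup_{t\to0}\Psi(x,t)$ is measurable, so $A_j$, the set where it is finite, is measurable.

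For a.e.\ existence and horizontality on $A_j$ I would straighten $X_j$ by a flow-box chart $\mathcal{O}\cong I\times V$ (available for $C^1$ fields), in which $\exp tX_j$ is the translation $(s,y)\mapsto(s+t,y)$ and, by the Ball--Box theorem, $\mathcal{H}^\nu$ is mutually absolutely continuous with $ds\,d\mu(y)$. For fixed $y$ put $c_y(s)=f(s,y)$; then $(s,y)\in A_j$ is \emph{exactly} the one-dimensional Stepanoff condition $\ap\varlimsup_{s'\to s}\widetilde d_{cc}(c_y(s),c_y(s'))/|s'-s|<\infty$ for the curve $c_y$ at $s$, and $\ap X_j f(s,y)$ is the approximate derivative of $c_y$ at $s$. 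For a.e.\ $y$ both the slice and $c_y$ are measurable (Fubini), so Theorem~\ref{Theorem:SetFamilyLipschitz} applied with the base space $\mathbb{R}$ decomposes the slice into a null set and countably many pieces on which $c_y$ is Lipschitz; Theorem~\ref{Theorem:DifLipschitz} yields a \emph{horizontal} sub-Riemannian derivative at a.e.\ point of each piece, which at density points coincides with the approximate derivative. Hence $c_y$ has a horizontal approximate derivative at a.e.\ point of its $A_j$-slice, and a second Fubini argument gives that $\ap X_j f(g)\in\widetilde\exp(H_g\widetilde{\mathcal{M}})$ exists for $\mathcal{H}^\nu$-a.e.\ $g\in A_j$.

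For measurability of $\ap X_j f$ I would record the first-kind coordinates of the curve relative to its base point, $b(g,t)=\widetilde\theta_{f(g)}^{-1}(f(\exp tX_j(g)))$. Joint measurability of $(g,t)\mapsto b(g,t)$ follows because $(w,x)\mapsto\widetilde\theta_w^{-1}(x)$ is continuous while $g\mapsto f(g)$ and $(g,t)\mapsto f(\exp tX_j(g))$ are measurable. Comparing $\widetilde d_{cc}$ with the first-kind quasimetric $d_\infty$ (Proposition~\ref{Prop:MetricsEquality} together with the Ball--Box theorem), the defining relation of the derivative forces, for each horizontal index $i\le\dim\widetilde H_1$, the coefficient $\alpha_i(g)=\ap\lim_{t\to0}b_i(g,t)/t$. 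By Property~\ref{Property2} each $\alpha_i$ is a measurable function of $g$, so $g\mapsto\ap X_j f(g)$, determined by $f(g)$ and $(\alpha_i(g))$, is measurable.

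I expect the principal difficulty to lie in the Fubini reduction of the middle step: setting up the flow-box coordinates for a merely $C^1$ field, matching $\mathcal{H}^\nu$-null sets with product-null sets via Ball--Box, handling the fact that $f$ is defined only on $E$ (so $c_y$ lives on the trace $\{s:(s,y)\in E\}$ and one must know that a.e.\ point of $E$ is a one-dimensional density point along a.e.\ integral curve), and ensuring that the exceptional null sets arising in the $y$-direction and in the $s$-direction assemble into a single genuine $\mathcal{H}^\nu$-null exceptional set.
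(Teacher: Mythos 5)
Your proposal is correct and takes essentially the same route as the paper's proof: measurability of $A_j$ via Property~\ref{Property2}, reduction to one dimension along the integral curves of $X_j$ (the paper parametrizes $h(t)=f(\exp tX_j(x))$ directly rather than straightening in a flow box), decomposition by Theorem~\ref{Theorem:SetFamilyLipschitz} into countably many pieces on which the slice curve is Lipschitz, horizontal differentiability of each piece via Theorem~\ref{Theorem:DifLipschitz}, identification with the approximate derivative at one-dimensional density points, and a Fubini assembly over the curves. The points you single out as delicate --- joint measurability of $\Psi$, matching $\mathcal{H}^\nu$-null sets with product-null sets, and the coordinate argument for measurability of $\ap X_j f$ --- are exactly the steps the paper leaves implicit, so your write-up is, if anything, more complete than the original.
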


\begin{proof}
Fix $j \in \{1,\dots, \dim H_1 \}$. A mapping
$$
t \mapsto |t|^{-1} \widetilde d_{cc}(\, f(x), f(\exp t X_j(x))\,)
$$
is measurable
and by Property~\ref{Property2} the set $A_j$ is measurable.
For every $x \in E$ define $A_x$ as a set of real numbers
$t$ such that $\exp t X_j(x) \in A_j$.
In the case $A_x \not= \emptyset$ define also the mapping
$h : A_x \to \widetilde{\mathcal M}$ by the rule $h(t) = f( \exp t X_j(x) )$.

If $y = \exp t X_j(x)$, $t \in A_x$, we have
\begin{align*}
    & \ap \varlimsup_{\tau \to 0}
        \frac {\widetilde d_{cc}( h(t), h(t+\tau) )} {|\tau|} \\
    & \quad = \ap \varlimsup_{\tau \to 0}
        \frac {\widetilde d_{cc}( f(\exp tX_j(x)), f(\exp(t+\tau)X_j(x)) )}{|\tau|} \\
    & \quad = \ap \varlimsup_{\tau \to 0}
        \frac {\widetilde d_{cc}( f(\exp tX_j(x)), f(\exp \tau X_j( \exp t X_j(x))) )}{|\tau|} \\
    & \quad = \ap \varlimsup_{\tau \to 0}
        \frac {\widetilde d_{cc}( f(y), f(\exp \tau X_j(y)))} {|\tau|} < \infty.
\end{align*}
Hence, $h$ meets the conditions of Theorem~\ref{Theorem:SetFamilyLipschitz}.
Therefore, $A_x = B_0 \cup \bigcup\limits_{i=1}^{\infty} B_i$, where
$\mathcal{H}^\nu(B_0) = 0$, all $B_i$, $i=1,\dots,\infty$, are
measurable and restriction of $h$ on every $B_i$ is a Lipschitz mapping.
If $h : B_i \to \widetilde{\mathcal M}$ is one of these restrictions then by
Theorem~\ref{Theorem:DifLipschitz} the sub-Riemannian derivative
$$
    \Bigl. \frac{d}{d\tau}_{sub} h(t+\tau)
        \Bigr|_{\mathop{}^{\tau = 0}_{t+\tau \in B_i}}
        \in \widetilde\exp H_{h(t)} \widetilde{\mathcal M}
$$
exists for almost all $t$.
If $t$ is a density point for the set $B_i$ then
\begin{align*}
    \Bigl. \frac{d}{d\tau}_{sub} h(t+\tau) \Bigr|_{\mathop{}^{\tau = 0}_{t+\tau \in B_i}}
    & = \Bigl. \ap \frac{d}{d\tau}_{sub} h(t+\tau) \Bigr|_{\tau = 0} \\
    & = \Bigl. \ap \frac{d}{d\tau}_{sub} f(\exp{\tau X_j(y)}) \Bigr|_{\tau = 0}
    = \ap X_j f(y).
\end{align*}
Thus, $\ap X_j f(y)$ exists in $\{ y=\exp t X_j(x) : t \in A_x\}$ for
almost all $t \in A_x$. This provides existence of the derivative $\ap X_j f$
almost everywhere in $A_j$.
\end{proof}

\begin{corollary} \label{Corollary:LipschitzDerives}
    A Lipschitz mapping $f$ has approximate derivatives $\ap X_j f$
    along the horizontal vector fields $X_j$ almost everywhere and
    $\ap X_j f(g) \in \widetilde \exp( H_g \mathcal{M})$ for almost all
    $g \in \dom f$.
\end{corollary}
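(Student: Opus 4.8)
The plan is to obtain the corollary as an immediate specialization of Lemma~\ref{Lemma:HorizDerivatives}: I would show that, for a Lipschitz mapping, the set $A_j$ appearing there exhausts the whole domain, so that the conclusions of the lemma hold almost everywhere on $\dom f$.

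First I would fix $j\in\{1,\dots,\dim H_1\}$ and an arbitrary point $x\in\dom f$ and estimate the difference quotient along the integral line of $X_j$. Using the Lipschitz condition with constant $C$,
$$
\widetilde d_{cc}\bigl(f(x),f(\exp tX_j(x))\bigr)\le C\,d_{cc}\bigl(x,\exp tX_j(x)\bigr).
$$
Since $X_j$ is a basic horizontal field, the curve $s\mapsto\exp(sX_j)(x)$, $s\in[0,t]$, is horizontal, so its $d_{cc}$-length — and hence $d_{cc}(x,\exp tX_j(x))$ — is at most $|t|\sup_{s}|X_j(\exp(sX_j)(x))|$. By continuity of $X_j$ this supremum is bounded by a constant $C'$ on any fixed compact neighborhood, giving $d_{cc}(x,\exp tX_j(x))\le C'|t|$ for small $t$. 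Consequently the ordinary difference quotient stays bounded by $CC'$ as $t\to0$.

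Because the approximate upper limit never exceeds the ordinary one, this gives
$$
\ap\varlimsup_{t\to0}\frac{\widetilde d_{cc}\bigl(f(x),f(\exp tX_j(x))\bigr)}{|t|}\le CC'<\infty
$$
at every $x$, so $A_j=\dom f$. The only point requiring care is that $C'$ is merely a local bound; I would therefore run the argument on a compact neighborhood and exhaust $\dom f$ by countably many such pieces, which costs nothing since all the conclusions are stated almost everywhere. With $A_j=\dom f$, Lemma~\ref{Lemma:HorizDerivatives} applies verbatim and yields that $\ap X_j f$ is defined and measurable almost everywhere and that $\ap X_j f(g)$ lies in $\widetilde\exp(H_g\widetilde{\mathcal M})$ for almost all $g$, for each $j=1,\dots,\dim H_1$. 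I do not expect any genuine obstacle here; the content is entirely in the lemma, and the Lipschitz hypothesis is exactly what forces the (approximate) horizontal difference quotients to be bounded.
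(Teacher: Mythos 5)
Your proposal is correct and takes essentially the same route as the paper, which states the corollary without separate proof as an immediate consequence of Lemma~\ref{Lemma:HorizDerivatives}: the Lipschitz bound $\widetilde d_{cc}(f(x),f(\exp tX_j(x)))\le C\,d_{cc}(x,\exp tX_j(x))\le CC'|t|$ forces the approximate upper limit defining $A_j$ to be finite everywhere, so $A_j$ exhausts $\dom f$ and the lemma applies. Your handling of the merely local bound $C'$ via a countable exhaustion by compact pieces is the only detail the paper leaves implicit, and it is done correctly.
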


\begin{remark}
Note that if $\ap X_j f(g)$ defined at
$g \in \mathcal{M}$ then
$\ap (a X_j) f(g)$ is also defined for all real numbers $a$. Moreover
$$
    \ap (a X_j) f(g) = \tilde \delta^{f(g)}_a \ap X_j f(g).
$$
\end{remark}

Let the coordinate system \eqref{Eq:CoordPhiSystem} be defined in
a neighborhood of a point $g \in \mathcal{M}$. Consider a curve
\begin{equation} \label{Eq:GammaCurves}
  \Gamma_k(g; t) = \Phi_k(t)(g).
\end{equation}
We say that the mapping $f$ is \textit{approximately differentiable}
along the curve $\Gamma_k(g; t)$ at $t=0$ if there is an element
$a \in \mathcal{G}^{f(g)} \cap \widetilde{\mathcal M}$ such that
$$
	\frac 1{r^{\deg X_k}} \mathcal{H}^{\deg X_k}
	\Bigl\{ t \in (-r, r) :
	\frac { \widetilde d_{cc}^{f(g)} ( f \circ \Gamma_k(g; t), \tilde \delta^{f(g)}_t a ) }
	      { d_{cc}^{g} ( g, \Gamma_k(g;t)) }
	> \varepsilon
	\Bigr\} \to 0 \quad as~~r \to 0.
$$
We denote this derivative by $a = \ap d_{sub} (f\circ\Gamma_k)(g)$.
If $k = 1,\dots,\dim H_1$, this definition coincides with the definition
of the approximate derivative from Subsection~\ref{Section:ApproximateDiff}.

\begin{lemma} \label{Lemma:DerivativesAlongCurves}
    Let $E \subset \mathcal{M}$ be a bounded measurable set
    and $f : E \to \widetilde{\mathcal{M}}$
    be a measurable mapping. Let also the coordinate system
    \eqref{Eq:CoordPhiSystem} be defined at the neighborhood of a point
    $g \in U$ with functions $\Phi_k$ satisfying \eqref{Eq:CoordPhiLine}.
    Then the mapping $f$ is approximately differentiable along the curve
    $\Gamma_k(g; t)$ defined by \eqref{Eq:GammaCurves},
    $k=\dim H_1+1,\dots,N$, at $t = 0$ almost everywhere in
    $A = \bigcap\limits_{j=1}^{\dim H_1} \dom \ap X_j f$.
    Moreover, the approximate derivative can be written as
    \begin{align} \label{Eq:CurveDerivativeStructure}
        \ap d_{sub} (f\circ\Gamma_k)(g) 
        & = \ap (s_{L_k} \widehat X^g_{j_{L_k}}) f \circ \dots
            \circ \ap (s_1 \widehat X^g_{j_1}) f(g) \notag \\
        & = \ap (s_1 \widehat X^g_{j_1}) f(g) \cdot \ldots
            \cdot \ap (s_{L_k} \widehat X^g_{j_{L_k}}) f(g) \in \mathcal{G}^{f(g)},
    \end{align}
    almost everywhere. Here $L_k \leq L$ and $s_i = \pm1$ are from the representation
    \eqref{Eq:CoordPhiLine}. Also the following estimate
    \begin{multline} \label{Eq:CurveDerivativeEstimate}
        \widetilde d_{cc}^{f(g)} \bigl( f(g), \ap d_{sub} (f\circ\Gamma_k)(g) \bigr) \\
        \leq L_k
            \max \{ \widetilde d_{cc} \bigl({f(g)}, \ap X_j f(g) \bigr) : j = 1,\dots,\dim H_1 \}
    \end{multline}
    holds for all $k = \dim H_1+1, \dots, N$.
\end{lemma}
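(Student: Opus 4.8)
The plan is to argue by induction on the number $L_k$ of horizontal legs that make up the curve $\Gamma_k(g;t) = \Phi_k(t)(g)$ of \eqref{Eq:CoordPhiLine}, the inductive step consisting in appending one leg. Writing $X_{j_i}$, $a_i$ for the field and the coefficient of the $i$-th leg and setting $p_0(t) = g$, $p_i(t) = \exp(a_i t X_{j_i})(p_{i-1}(t))$, so that $\Gamma_k(g;t) = p_{L_k}(t)$, the base case $L_k = 1$ is exactly the definition of $\ap X_{j_1}f$ together with the scaling Remark, which gives $\ap d_{sub}(f\circ\Gamma)(g) = \tilde\delta^{f(g)}_{a_1}\ap X_{j_1}f(g)$ for every $g \in \dom\ap X_{j_1}f \supseteq A$. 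Before the induction I would record two preliminary reductions. First, by Theorem~\ref{Th:PathsEstimate} the source curve $\Gamma_k(g;t)$ differs from the corresponding nilpotent curve $\widehat\Phi_k(t)(g) = \delta^g_t(\exp(\widehat X^g_k)(g))$ by $o(t)$ in $d_\infty$, whence $d^g_{cc}(g,\Gamma_k(g;t)) \asymp |t|$, so the density appearing in the definition is effectively a density in the parameter $t$. Second, since each $\ap X_j f$ is measurable by Lemma~\ref{Lemma:HorizDerivatives}, it is approximately continuous almost everywhere; by Egorov's theorem I may pass to a subset $A' \subset A$ of measure arbitrarily close to that of $A$ on which the approximate derivatives exist with uniform density control and every restriction $\ap X_j f|_{A'}$ is continuous, recovering the full-measure statement by exhausting $A$ at the end.

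The inductive step appends an $X_{j_m}$-flow of length $a_m t$ that starts at the moving base point $p_{m-1}(t)$, and this moving base point is the main obstacle: the derivative $\ap X_{j_m}f$ is known only almost everywhere, whereas the points $p_{m-1}(t)$ sweep out a one-dimensional, hence $\mathcal{H}^\nu$-null, set. I would resolve the existence issue by a Fubini--Tonelli argument phrased through the auxiliary map $(g,t)\mapsto(p_{m-1}(t),t)$, whose differential is block-triangular with nonsingular blocks and which is therefore a $C^1$-diffeomorphism preserving null sets. Since $\dom\ap X_{j_m}f$ has full measure, its complement crossed with the $t$-line is null, and pulling back shows that $\{(g,t): p_{m-1}(t)\notin\dom\ap X_{j_m}f\}$ is null; consequently, for almost every $g$ the intermediate derivative $\ap X_{j_m}f(p_{m-1}(t))$ exists for almost every $t$. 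Approximate continuity of $\ap X_{j_m}f$ at $g$, made uniform on $A'$, then guarantees that, off a set of $t$ of density zero, these intermediate derivatives converge to $\ap X_{j_m}f(g)$ as $t\to 0$.

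With existence and convergence of the intermediate derivatives in hand, the assembly is a telescoping estimate in the target. I would compare $f(p_m(t))$ with $\tilde\delta^{f(g)}_t D_m(g)$, where $D_m(g) = D_{m-1}(g)\cdot\tilde\delta^{f(g)}_{a_m}\ap X_{j_m}f(g)$, by inserting the intermediate values $f(p_i(t))$ and bounding each one-leg increment by means of the approximate derivative at $p_{i-1}(t)$. The complication is that each increment is naturally controlled in the shifting metric $\widetilde d^{f(p_{i-1}(t))}_{cc}$; here the local approximation theorem (Theorem~\ref{Th:LocApprForDcc}), applied in $\widetilde{\mathcal M}$, lets me replace every $\widetilde d^{f(p_{i-1}(t))}_{cc}$ by the fixed $\widetilde d^{f(g)}_{cc}$ at the cost of a further $o$-term, and left invariance of $\widetilde d^{f(g)}_{cc}$ turns the telescoped increments into the group product $\prod_i \tilde\delta^{f(g)}_{a_i}\ap X_{j_i}f(g)$. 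The uniform density control on $A'$ keeps all the $o$-terms summable over the fixed finite number $L_k$ of legs, so the total exceptional set in $t$ has density zero, establishing both the product formula \eqref{Eq:CurveDerivativeStructure} and approximate differentiability along $\Gamma_k$.

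Finally, the estimate \eqref{Eq:CurveDerivativeEstimate} is immediate from the product formula: since $\widetilde d^{f(g)}_{cc}$ is a genuine left-invariant metric on $\mathcal{G}^{f(g)}$, the honest triangle inequality gives $\widetilde d^{f(g)}_{cc}(f(g), \prod_{i=1}^{L_k} x_i) \le \sum_{i=1}^{L_k}\widetilde d^{f(g)}_{cc}(f(g), x_i)$ with $x_i = \tilde\delta^{f(g)}_{a_i}\ap X_{j_i}f(g)$, and each summand is bounded by $\max_j \widetilde d_{cc}(f(g), \ap X_j f(g))$, which yields the factor $L_k$. Measurability of the set on which the assertion holds and of the resulting derivative follows from the explicit product formula together with Properties~\ref{Property1} and \ref{Property2}. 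I expect the genuinely hard part to be the uniform control of the one-leg increments at the moving base points, that is, combining the diffeomorphism Fubini argument, approximate continuity, and Egorov so that the non-uniform $o$-terms in the definition of each intermediate derivative can be summed over the legs without destroying the density-zero conclusion.
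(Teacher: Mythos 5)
Your overall architecture (Luzin plus Egorov to make the one-leg rates uniform, a telescoping induction over the legs with moduli of continuity, Theorem~\ref{Th:PathsEstimate} to compare $\Gamma_k$ with the nilpotent curve, and the left-invariant triangle inequality for \eqref{Eq:CurveDerivativeEstimate}) is essentially the paper's proof. But your treatment of the moving base points --- which you correctly identify as the crux --- contains a genuine error. You write: ``Since $\dom\ap X_{j_m}f$ has full measure, its complement crossed with the $t$-line is null, and pulling back shows that $\{(g,t): p_{m-1}(t)\notin\dom\ap X_{j_m}f\}$ is null; consequently, for almost every $g$ the intermediate derivative exists for almost every $t$.'' In this lemma $E$ is an \emph{arbitrary} bounded measurable set, so $A$ and $\dom\ap X_{j_m}f\subset E$ may have arbitrarily small measure in $\mathcal{M}$; the complement of $\dom\ap X_{j_m}f$ is not null, your pulled-back set is not null, and for fixed $g$ the bad set of parameters $\{t : p_{m-1}(t)\notin\dom\ap X_{j_m}f\}$ is in general of positive $\mathcal{H}^1$-measure (typically co-null). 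The statement that is actually true and needed is weaker in one direction and stronger in another: for almost every $g$ in a refined good set, the set of $t\in(-r,r)$ with $u_{i}(t)$ outside that good set has $\mathcal{H}^1$-measure at most $r/m$ for all small $r$, \emph{with bounds uniform over base points in the good set}. The uniformity is indispensable precisely because the base point of the $i$-th leg is $u_{i-1}(t)$, which moves with $t$ and never equals $g$; a pointwise a.e.\ statement at $g$ gives no control of the one-leg increment at $u_{i-1}(t)$. This is what the paper's sets $Z_j(x,r)$ and $C^m_j(q)$ accomplish: measurability via Property~\ref{Property1}, then an exhaustion $C^m_j(q)\subset C^m_j(q+1)$ with parameters $q_m$ chosen so that the quantitative bound $\mathcal{H}^1[Z_j(x,r)]\le r/2^m$ holds simultaneously for all $x$ in the set $F_1$ and all $r<q_m^{-1}$. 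Your flow-straightening Fubini idea can be salvaged to prove the a.e.\ one-dimensional density statement along integral lines (that is how one shows $\mathcal{H}^\nu\bigl(F\setminus\bigcup_q C^m_j(q)\bigr)=0$), but it cannot replace the uniform selection, and as written your existence step is false.

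A secondary omission: the definition of approximate differentiability along $\Gamma_k$ requires the exceptional $t$-set to be small in $\mathcal{H}^{\deg X_k}$-density, normalized by $r^{\deg X_k}$, not merely in $\mathcal{H}^1$-density. Your argument produces an exceptional set $S^m\subset(-r,r)$ with $\mathcal{H}^1(S^m)\lesssim L_k r/m$, and one still needs the covering argument of the paper (intervals $(a_\xi,b_\xi)$ of length $<\delta$ with total length $<\Lambda$, giving $\sum_\xi|b_\xi-a_\xi|^{\deg X_k}<\Lambda(2r)^{\deg X_k-1}$) to convert this into $\mathcal{H}^{\deg X_k}(S^m)\le 2^{\deg X_k}L_k\,r^{\deg X_k}/m$. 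This step is routine but not automatic, and your proposal passes over it silently.
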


A sketch of the proof:

At the first step we apply Luzin's and Egorov's theorems
to a bounded set $A$ and obtain a set $A' \subset A$ that differs
from $A$ on a set of a measure small enough and on which the limit
$\ap \lim\limits_{t \to 0} \widetilde \delta^{f(g)}_{t^{-1}} f (\exp t X_j)(g)$
converges to $\ap X_j f(g)$ uniformly.

Next we assure that the set of real numbers $t$, for which the relation
\eqref{Eq:CurveDerivativeStructure} does not hold, is negligible.

At last, we prove that the uniform limit
$\ap \lim\limits_{t \to 0} \widetilde\delta^{f(g)}_{t^{-1}} f \circ \Gamma_k(g;t)$
converges to the
\eqref{Eq:CurveDerivativeStructure} in $A'$.

\begin{proof}
By Lemma~\ref{Lemma:HorizDerivatives} the sets $A_j = \dom \ap X_j f \subset E$
are measurable and the mappings $\ap X_j f$ are measurable in $A_j$ for
all $j = 1,\dots,\dim H_1$.

We have $\mathcal{H}^\nu(A_j) \leq \mathcal{H}^\nu(E) < \infty$. Fix $\varepsilon > 0$.
Applying Luzin's theorem we find a closed set
$E' \subset A$ such that
$\mathcal{H}^\nu( A \setminus E' ) < \varepsilon/2$ and all $\ap X_j f$ are
uniformly continuous in $E'$.

Consider a sequence of functions
$\{ \varphi^j_n : E' \to \mathbb{R} \}_{n \in \mathbb{N}}$
defined as
$$
	\varphi^j_n(g) = \sup_{|t| < \frac 1n}
	\frac {\widetilde d_{cc}^{f(g)} \bigl( f(\exp(t X_j)(g)),
	\widetilde \delta^{f(g)}_t \ap X_j f(g) \bigr)} {|t|},
	\quad j=1,\dots,\dim H_1.
$$
Since $\varphi^j_n(g) \mathop{\to}\limits_{ap} 0$ as $n \to \infty$,
by Egorov's theorem we obtain a measurable set $E'' \subset E'$ such that 
$\mathcal{H}^\nu( E' \setminus E'' ) < \varepsilon/2$ and
$\varphi^j_n(g) \to 0$ as $n \to \infty$ uniformly on $E''$.
Therefore, the limits
$$
	\ap \lim_{t \to 0} \frac
		{\widetilde d_{cc}^{f(g)} \bigl( f(\exp(t X_j)(g)),
		\widetilde \delta^{f(g)}_t \ap X_j f(g) \bigr)}
		{|t|} = 0 
$$
converge uniformly on $E''$ for all $j = 1,\dots,\dim H_1$.

For every positive integer $m$ and for all $x \in E$, $r > 0$ define a set
$$
    T^m_j(x,r) = \Bigl\{
        t \in (-r, r) :
        \widetilde d_{cc}^{f(x)} \bigl( f(\exp tX_j(x)),
        \tilde\delta^{f(g)}_t \ap X_j f(x) \bigr)
        > \frac{|t|}{m}
    \Bigr\}.
$$
For all positive integers $p$ we introduce
$$
    B^m_j(p) = A_j \cap \Bigl\{ x \in E  :
        \mathcal{H}^1[ T^m_j(x,r) ] \leq \frac{r}{m} \text{~for all~} 0 < r < p^{-1}
    \Bigr\}.
$$
By Property~\ref{Property1} the sets $B^m_j(p)$ are measurable for all
$j = 1,\dots, \dim H_1$. We have also $\bigcup\limits_{p=1}^{\infty} B^m_j(p) = A_j$.

Moreover, $B^m_j(p) \subset B^m_j(p+1)$.
Hence, we can choose a sequence of numbers $p_1, p_2, \dots$ such that
$\mathcal{H}^\nu(E'' \setminus B^m_j(p_m)) < \frac{\varepsilon}{2^m}$ holds.
Therefore,
$$
    \mathcal{H}^\nu(E'' \setminus F) < \varepsilon\!\cdot\!\dim H_1, \quad
        \text{~where~} F = \bigcap_{j=1}^{\dim H_1} \bigcap_{m=1}^{\infty} B^m_j(p_m).
$$
Next, for all $x \in F$, $r > 0$ define a set
$$
    Z_j(x,r) = \{ y = \exp t X_j(x) : |t|<r \text{~and~} y \not\in F \}, \quad j=1,\dots,\dim H_1.
$$
For all positive integers $m$ and $q$ define the sets
$$
    C^m_j(q) = F \cap \Bigl\{ x \in E :
        \mathcal{H}^1[Z_j(x,r)] \leq \frac{r}{2^m} \text{~for all~} 0 < r < q^{-1}
    \Bigr\}.
$$
By Property~\ref{Property1} all $C^m_j(q)$ are measurable. Also
$\mathcal{H}^\nu \Bigl( F \setminus \bigcup\limits_{q=1}^{\infty} C^m_j(q) \Bigl) = 0$.

Moreover, $C^m_j(q) \subset C^m_j(q+1)$. Hence, we can choose a sequence
of numbers $q_1, q_2, \dots$ such that
$\mathcal{H}^\nu(F \setminus C^m_j(q_m)) < \frac{\varepsilon}{2^m}$ holds.
Therefore,
$$
    \mathcal{H}^\nu(F \setminus F_1) < m\varepsilon, \quad
        \text{~where~} F_1 = \bigcap_{j=1}^{\dim H_1} \bigcap_{n=1}^{\infty} C^n_j(q_n).
$$

Next, we prove that the function $f$ is approximately differentiable
along the curve $\Gamma_k(g;t)$ uniformly in $F_1$ and the mapping
$g \mapsto \ap \bigl. \frac{d}{dt}_{sub} f(\Gamma_k(g;t)) \bigr\vert_{t=0}$
is uniformly continuous in $F_1$.

Fix $m \in \mathbb{N}$, $0 < r < \min \{ p_m^{-1}, q_m^{-1} \}$ and a
density point $g \in F_1$. Denote
\begin{align*}
    u_1(t) & = \exp (t s_1 X_{j_1})(g), \\
    u_i(t) & = \exp (t s_i X_{j_i})( u_{i-1}(t) ), \quad i=2,\dots,L_k.
\end{align*}
Then $u_{L_k}(t)=\Gamma_k(g;t)$. Define the set $S^m \subset (-r, r)$
as follows:
\begin{align*}
    t \in S^m, \text{~if~} & s_1 t \in T^m_{j_1}(g, r), \\
         \text{~or~} & s_i t \in T^m_{j_i}( u_{i-1}(t) , r), \\
         \text{~or~} & u_1(t) \in Z_{j_1}(g, r), \\
         \text{~or~} & u_i(t) \in Z_{j_i}( u_{i-1}(t) , r), \quad i=2,\dots,L_k. \\
\end{align*}
Since $\mathcal{H}^1[T^m_{j_1}(g, r)] \leq \frac{r}{m}$,
$\mathcal{H}^1[Z_{j_1}(g,r)] \leq \frac{r}{m}$ and since
$\mathcal{H}^1[T^m_{j_i}( u_{i-1}(t) , r)] \leq \frac{r}{m}$,
$\mathcal{H}^1[Z_{j_i}( u_{i-1}(t) , r)] \leq \frac{r}{m}$ if
$u_{i-1}(t) \in F_1$, $i=2,\dots,L_k$, we have
$$
    \mathcal{H}^1(S^m) \leq 2 L_k \frac{r}{m}.
$$

Now we estimate $\mathcal{H}^{\deg X_k}$-measure of the set $S^m$.
Fix arbitrary numbers
\begin{equation} \label{Eq:DeltaUndLambda}
	\delta > 0 \text{~and~} \Lambda > 2 L_k \frac{r}{m}.
\end{equation}
Cover the set $S^m$ with a countable family of intervals $(a_\xi, b_\xi)$
so that
$$
    b_\xi - a_\xi < \delta, \quad \sum_\xi (b_\xi - a_\xi) < \Lambda.
$$
Then
$$
    |b_\xi-a_\xi|^{\deg X_k} < \delta (2r)^{\deg X_k-1}, \quad
        \sum_\xi |b_\xi-a_\xi|^{\deg X_k} < \Lambda (2r)^{\deg X_k-1}.
$$
Since $\delta$ and $\Lambda$ are arbitrary numbers of
\eqref{Eq:DeltaUndLambda}, we have
$$
    \mathcal{H}^{\deg X_k}(S^m)
        \leq 2^{\deg X_k} L_k \frac{r^{\deg X_k}}{m}.
$$

Now we show that the expression \eqref{Eq:CurveDerivativeStructure}
is really the derivative of the composition $f \circ \Gamma_k$.
For the points $u, v \in F_1$ we have
\begin{align*}
    & \widetilde d_{cc}^{f(g)} \bigl( \, f(\exp(t s_i X_i)(v)), \,
        \ap (t s_i X_i)f(v) \, \bigr) \leq \varphi_i(t), \\
    & \widetilde d_{cc}^{f(g)} \bigl( \, \ap(t s_i X_i )f(u), \,
        \ap(t s_i X_i)f(v) \, \bigr) \leq t \omega_i( d_{cc}(u,v) ),
\end{align*}
where $\frac{\varphi_i(t)}{t} \to 0$ as $t \to 0$ uniformly for $v \in F_1$ and
$\omega_i(t)$ are moduli of continuity of the mappings $\ap (s_i X_i)f(\cdot)$
in $F_1$, $i = 1, \dots, \dim H_1$.

If $|t|<r$ and $t \not\in S^m$ we obtain
\begin{align*}
    & \widetilde d_{cc}^{f(g)} \bigl( f \circ u_1(t),\,
        \tilde\delta^{f(g)}_{t} \ap(s_1 X_1)f(g) \bigr) \\
    & \quad = \widetilde d_{cc}^{f(g)} \bigl( f \circ \exp(t s_1 X_1)(g),\,
         \ap(t s_1 X_1)f(g) \bigr) \\
    & \quad \leq \varphi_1(t) = C_1(t).
\end{align*}
Further, by induction:
\begin{align*}
    & \widetilde d_{cc}^{f(g)} \Bigl( f \circ u_j(t) ,\,
        \tilde\delta^{f(g)}_{t} \prod_{i=1}^{j} \ap(s_i X_i)f(g) \Bigr) \\
    & \ = \widetilde d_{cc}^{f(g)} \Bigl( f \circ \exp(t s_j X_j) (u_{j-1}(t)) ,\,
        \ap(t s_j X_j)f \circ \prod_{i=1}^{j-1} \ap(t s_i X_i)f(g) \Bigr) \\
    & \ \leq  \widetilde d_{cc}^{f(g)} \bigl( f \circ \exp(t s_j X_j) (u_{j-1}(t)) ,\,
        \ap(t s_j X_j)f(u_{j-1}(t)) \bigr) \\
    & \qquad + \widetilde d_{cc}^{f(g)} \Bigl( \ap(t s_j X_j)f(u_{j-1}(t)) ,\,
        \ap(t s_j X_j)f \circ \prod_{i=1}^{j-1} \ap(t s_i X_i)f(g) \Bigr) \\
    & \ \leq \varphi_j(t) + t \omega_j \bigl( C_{j-1}(t) \bigr)
        = C_j(t),
\end{align*}
where $\frac {C_j(t)}{t} \to 0$ as $t \to 0$ uniformly for $g \in F_1$.

Therefore, for $t \in (-r,r) \setminus S^m$ we have an evaluation
$$
    \widetilde d_{cc}^{f(g)} \Bigl( f(\Gamma_k(g;t)) ,\,
        \tilde\delta^{f(g)}_{t} \prod_{i=1}^{L_k} \ap(s_i X_i)f(g) \Bigr) = o(t),
$$
i.~e. the equality
$$
    \ap \Bigl. \frac{d}{dt}_{sub} f(\Gamma_k(g;t)) =
        \prod_{i=1}^{L_k} \ap(s_i X_i)f(g)
$$
holds for $g \in F_1$. Since $r$, $m$, $\varepsilon$ are arbitrary the latter takes place almost
everywhere in $E$. The inequality \eqref{Eq:CurveDerivativeEstimate} follows from
\eqref{Eq:CurveDerivativeStructure} and the generalized triangle inequality.
\end{proof}

\begin{remark}
Consider in the previous lemma the curve $\Gamma'_k(g; t) = \Gamma_k(g; \lambda t)$,
$\lambda \in \mathbb{R}\setminus\{0\}$, instead of $\Gamma_k(g; t)$.
The following representation takes place
$$
    \Gamma'_k(g; t) = \exp (\lambda t s_{L_k} X_{j_{L_k}}) \circ \dots
        \circ \exp(\lambda t s_1 X_{j_1})(g),
$$
where $s_i = \pm 1$, $1 \leq j_i \leq \dim H_1$. Then if there is    
$\ap d_{sub} (f\circ\Gamma_k)(g)$ defined at the point $g \in \mathcal{M}$
the derivative $\ap d_{sub} (f\circ\Gamma'_k)(g)$ is also defined
and we have
\begin{align} 
    \notag
         \ap d_{sub} (f\circ\Gamma'_k)(g) &
            = \prod_{i=1}^{L_k} \ap (\lambda s_i X_i) f(g)
    \\ \label{Eq:GammaDilation}
        &= \tilde \delta^{f(g)}_\lambda \prod_{i=1}^{L_k} \ap (s_i X_i) f(g)
        = \tilde \delta^{f(g)}_\lambda \ap d_{sub} (f\circ\Gamma_k)(g).
\end{align}
\end{remark}

% == Subsection 3.2 ==
\subsection{Construction and properties of a mapping of local groups}
\label{Section:LMapping}
Consider the system of the coordinates of the second kind \eqref{Eq:PhiHatSystem}
in a neighborhood $V(g) \subset \mathcal{G}^g$ of $g$. Define a mapping
$L_g : V(g) \to \mathcal{G}^{f(g)}$ as follows:
\begin{equation} \label{Eq:LMapping}
    L_g : \hat v  = \widehat \Phi_g(t_1,\dots,t_N) \mapsto
        \prod_{k=1}^{N} \tilde\delta^{f(g)}_{t_k}
            \ap d_{sub} (f\circ\Gamma_k)(g).
\end{equation}

Declare some properties of this mapping.

\begin{property} \label{PropertyL0}
The mapping $L_g$ is continuous.
\end{property}

It follows directly from \eqref{Eq:LMapping}.

\begin{property} \label{PropertyL1}
$\tilde\delta^{f(g)}_\lambda \circ L_g = L_g \circ \delta^g_\lambda$.
\end{property}

Really, for $\hat v = \widehat \Phi_g(t_1,\dots,t_N)$ we have
\begin{align*}
    \delta^g_\lambda \hat v & =
        \delta^g_\lambda \widehat \Phi_g(t_1,\dots,t_N) \\
    & = \delta^g_\lambda \widehat \Phi_N(t_N) \circ \dots
        \circ \widehat \Phi_{\dim H_1 + 1}(t_{\dim H_1 + 1}) \\
    & \qquad \circ \exp(t_{\dim H_1} \widehat X^g_{\dim H_1}) \circ \dots
        \circ \exp(t_1 \widehat X^g_1)(g) \\
    & = \widehat \Phi_N(\lambda t_N) \circ \dots
        \circ \widehat \Phi_{\dim H_1 + 1}(\lambda t_{\dim H_1 + 1}) \\
    & \qquad \circ \exp(\lambda t_{\dim H_1} \widehat X^g_{\dim H_1}) \circ \dots
        \circ \exp(\lambda t_1 \widehat X^g_1)(g) \\
    & = \widehat \Phi_g(\lambda t_1, \dots, \lambda t_N).
\end{align*}
Then, taking into account \eqref{Eq:GammaDilation}, we get
\begin{align*}
  L_g(\delta^g_\lambda \hat v)
  & = \prod_{k=1}^{N} \tilde\delta^{f(g)}_{\lambda t_k}
    \ap d_{sub} (f\circ\Gamma_k)(g) \\
  & = \tilde\delta^{f(g)}_\lambda \prod_{k=1}^{N}
    \tilde\delta^{f(g)}_{t_k}
    \ap d_{sub} (f\circ\Gamma_k)(g)
    = \tilde\delta^{f(g)}_\lambda L_g(\hat v).
\end{align*}

\begin{property} \label{PropertyL2}
The mapping $L_g$ is bounded.
\end{property}

By Property~\ref{PropertyL1} the mapping $L_g$ is homogeneous, so
$$
    \Vert L_g \Vert
    = \sup_{v \not= g} \frac { \widetilde d_{cc}^{f(g)}( L_g(g), L_g(v)) } { d_{cc}^g(g,v) }
    = \sup_{d_{cc}^g(g,v)=1} \widetilde d_{cc}^{f(g)}(L_g(g), L_g(v)).
$$
The latter is finite because of continuity of $L_g$.

\begin{property} \label{PropertyL3}
Let $u$, $v \in \mathcal{G}^g$ be such that $d_{cc}^g(u,v) = o(d_{cc}^g(g,u))$ as
$u \to g$. Then
$$
	\widetilde d_{cc}^{f(g)}(L_g(u),L_g(v)) = o(d_{cc}^g(g,u)).
$$
\end{property}

Let $\omega(t)$ be a modulus of continuity of the mapping
$L_g : B_{cc}(g,2) \to \mathcal{G}^{f(g)}$. Then if we define
$r = \max \{ d_{cc}^g(g,u), d_{cc}^g(g,v) \}$
by Property~\ref{PropertyL1} we have
\begin{multline*}
    \widetilde d_{cc}^{f(g)}( L_g(u), L_g(v) ) =
    O(r)\, \widetilde d_{cc}^{f(g)}( L_g(\delta^g_{r^{-1}} u), L_g(\delta^g_{r^{-1}} v) ) \\ \leq
    O(r)\, \omega \Bigl( \frac {d_{cc}^g(u,v)} {r} \Bigr) = r\, o(1)
    \quad \text{~as~} r \to 0.
\end{multline*}

\begin{lemma} \label{Lemma:CoordinateDifferentiability}
    Let $E \subset \mathcal{M}$ be a bounded measurable set
    and let $f : E \to \widetilde{\mathcal M}$ be a measurable mapping.
    Let the coordinate system of the 2nd kind
    \eqref{Eq:CoordPhiSystem} be defined in a neighborhood of $g \in \mathcal{M}$.
    Then the mapping $f$ is approximately differentiable along the curves
    $\Gamma_k(g;t)$, $k=1,\dots,N$, almost everywhere in
    $A = \bigcap\limits_{j=1}^{\dim H_1} \dom \ap X_j f$ and the equality
    \begin{equation} \label{Eq:LDiffers}
        \ap \lim_{v \to g} \frac {\widetilde d_{cc}^{f(g)}(f(v), L_g(v))} {d_{cc}^g(g,v)} = 0
    \end{equation}
    holds for almost all $g \in A$, where $L_g$ is the mapping
    defined by the formula \eqref{Eq:LMapping}.
\end{lemma}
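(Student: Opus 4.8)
The statement contains two assertions, and the first is essentially free. For $k=\dim H_1+1,\dots,N$ the approximate differentiability of $f$ along $\Gamma_k(g;\cdot)$ at $t=0$, together with the formula \eqref{Eq:CurveDerivativeStructure}, is exactly Lemma~\ref{Lemma:DerivativesAlongCurves}; for $k=1,\dots,\dim H_1$ the curve is $\Gamma_k(g;t)=\exp(tX_k)(g)$ and the assertion reduces to the existence of $\ap X_kf(g)$, which holds on $A=\bigcap_{j}\dom\ap X_jf$ by definition. Thus all the work lies in the differential identity \eqref{Eq:LDiffers}, i.e.\ in showing that the homogeneous mapping $L_g$ assembled in \eqref{Eq:LMapping} from the curve derivatives is in fact the approximate differential of $f$ at almost every $g\in A$.

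I would begin, exactly as in the proof of Lemma~\ref{Lemma:DerivativesAlongCurves}, by reducing \eqref{Eq:LDiffers} to \emph{uniform} estimates on a large subset. Applying Luzin's theorem I pass to a closed set $E'\subset A$ with $\mathcal H^\nu(A\setminus E')$ small on which every $\ap X_jf$ is uniformly continuous; by \eqref{Eq:CurveDerivativeStructure} each derivative $\ap d_{sub}(f\circ\Gamma_k)(\cdot)$ is then uniformly continuous on $E'$ as well, with a common modulus of continuity $\omega$. Applying Egorov's theorem to the family defining approximate differentiability along the curves $\Gamma_k$, I obtain a further subset $F\subset E'$, still of almost full measure, on which all the remainders converge uniformly; concretely, on $F$ there are remainders with $\widetilde d_{cc}^{f(x)}\!\bigl(f(\Gamma_k(x;t)),\tilde\delta^{f(x)}_t\,\ap d_{sub}(f\circ\Gamma_k)(x)\bigr)=o(t)$ uniformly in $x\in F$ and simultaneously in all $k=1,\dots,N$. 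This is where Properties~\ref{PropertyL0}--\ref{PropertyL3} of $L_g$ enter: homogeneity (Property~\ref{PropertyL1}), boundedness (Property~\ref{PropertyL2}), and especially the contraction estimate (Property~\ref{PropertyL3}) will be used to propagate the step errors through the product without loss.

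The core of the argument is a telescoping along the coordinate path. Fix a density point $g\in F$ and a nearby $v$ written in the nilpotent coordinates $v=\widehat\Phi_g(t_1,\dots,t_N)$ of \eqref{Eq:PhiHatSystem}; set $\hat v_0=g$ and $\hat v_k=\widehat\Phi_k(t_k)(\hat v_{k-1})$, so $\hat v_N=v$. Since $L_g(g)=f(g)$ and $L_g$ is the ordered product of the factors $\tilde\delta^{f(g)}_{t_k}\,\ap d_{sub}(f\circ\Gamma_k)(g)$, I would estimate
$$
\widetilde d_{cc}^{f(g)}\bigl(f(v),L_g(v)\bigr)\le\sum_{k=1}^{N}\widetilde d_{cc}^{f(g)}\bigl(R_{k},R_{k-1}\bigr),
$$
where $R_0=L_g(v)$, $R_N=f(v)$, and $R_k$ is obtained from $L_g(v)$ by replacing its first $k$ dilated factors by the genuine increments of $f$ along the segments from $\hat v_{i-1}$ to $\hat v_i$. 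By left-invariance of $d_{cc}^{f(g)}$ each summand reduces to the single-step discrepancy between $f(\hat v_k)$ and $\tilde\delta^{f(g)}_{t_k}\,\ap d_{sub}(f\circ\Gamma_k)(\hat v_{k-1})$, which is $o(t_k)=o(d_{cc}^g(g,v))$ by the uniform differentiability along $\Gamma_k$, while replacing the base point $\hat v_{k-1}$ by $g$ in the derivative costs only $t_k\,\omega(d_{cc}^g(g,\hat v_{k-1}))=o(d_{cc}^g(g,v))$ by uniform continuity, and Property~\ref{PropertyL3} guarantees that conjugation by the bounded homogeneous prefix does not destroy these gains. Because $\Gamma_k$ is built from the original fields $X$ whereas the coordinates $\widehat\Phi_g$ and the metric $d_{cc}^g$ refer to the nilpotentized fields $\widehat X^g$, I would insert the comparison Theorems~\ref{Th:PathsEstimate} and~\ref{Theorem:MetricsDeviation} (and Theorem~\ref{Th:LocApprForDcc}) to pass between the two at the cost of a further uniform $o(d_{cc}^g(g,v))$.

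The step I expect to be the genuine obstacle is promoting these pointwise, one-parameter estimates to an honest $\nu$-dimensional \emph{approximate}-limit statement. The discrepancy bound above is valid only for those $v$ whose entire coordinate path $\hat v_0,\dots,\hat v_N$ stays in $F$ and avoids the exceptional parameter sets where uniform differentiability fails; I must show the complementary bad set has $\mathcal H^\nu$-density zero at $g$. Following \cite{Bib:Vod-Groups}, this is the same bookkeeping as in Lemma~\ref{Lemma:DerivativesAlongCurves}: for each coordinate direction introduce the sets $T^m_j$ and $Z_j$ controlling respectively the failure of the uniform remainder and the escape from $F$, show their one-dimensional slices are small, and then integrate these $1$-dimensional density-zero statements over the coordinate box by a Fubini/coarea argument to conclude that the total bad set is null in density. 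Measurability of all sets involved is supplied by Properties~\ref{Property1} and~\ref{Property2}. Once the bad set is shown to have density zero, \eqref{Eq:LDiffers} follows by letting the Luzin--Egorov exceptional measure tend to zero, which proves the lemma.
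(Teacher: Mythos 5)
Your proposal is correct and follows essentially the same route as the paper's proof: a Luzin--Egorov reduction to a set $F$ on which the one-step remainders along the $\Gamma_k$ converge uniformly and the maps $x \mapsto \ap d_{sub}(f\circ\Gamma_k)(x)$ are uniformly continuous, the $T^m_k$/$Z^m_k$ bad-set bookkeeping with a Fubini-type bound $\mathcal{H}^\nu(S_k) \leq C\,r^\nu/m$ on the exceptional parameter set, a telescoping induction along the coordinate path controlled by the moduli of continuity $\omega_k$, and the final passage between the path built on the original fields and $\hat v = \widehat\Phi_g(t_1,\dots,t_N)$ via Theorem~\ref{Th:PathsEstimate} together with Property~\ref{PropertyL3}. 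The only cosmetic deviations are that the paper telescopes along $u_k = \Gamma_k(u_{k-1};t_k)$ (original fields) and compares with $\hat v$ once at the end rather than step by step, and that the escape sets $Z^m_k$ are measured with $\mathcal{H}^{h_{k-1}}$ on $(k-1)$-dimensional parameter slices rather than as iterated one-dimensional statements; neither affects the substance of the argument.
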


\begin{proof}
By Lemma~\ref{Lemma:HorizDerivatives} all sets $A_j = \dom \ap X_j f$
are measurable and by Lemma~\ref{Lemma:DerivativesAlongCurves}
$f$ is approximately differentiable along the curves $\Gamma_k$,
$k=1,\dots,N$, almost everywhere in $A$.

Fix $\varepsilon > 0$. By Luzin's theorem there is a measurable
set $E' \subset A$ such that $\mathcal{H}^\nu(A \setminus E') < \varepsilon / 2$
and the mappings $E' \owns x \mapsto \ap d_{sub} (f\circ\Gamma_k)(x)$
are uniformly continuous for all $k=1,\dots,N$.

Consider a sequence of functions
$\{ \varphi^k_n : E' \to \mathbb{R} \}_{n \in \mathbb{N}}$
defined as
$$
	\varphi^k_n(g) = \sup_{|t| < \frac 1n}
	\frac {\widetilde d_{cc}^{f(g)} \bigl( f(\Gamma_k(v;t)) ,\,
        \tilde\delta^{f(v)}_t \ap d_{sub} (f\circ\Gamma_k)(v) \bigr)} {|t|},
   \quad k = 1,\dots,N.
$$
We have $\varphi^k_n(g) \mathop{\to}\limits_{ap} 0$ as $n \to \infty$ in $E'$.
By Egorov's theorem there is $E'' \subset E'$ such that
$\mathcal{H}^\nu(E' \setminus E'') < \varepsilon/2$ and
$\varphi^k_n(g) \to 0$ as $n \to \infty$ uniformly on $E''$.

For every positive integer $m$ and for all $x \in E$, $r > 0$ we define the set
$$
	T^m_k(x,r) = \Bigl\{ t \in (-r,r) :
    \widetilde d_{cc}^{f(x)} \bigl( f(\Gamma_k(x;t)),
        \tilde\delta^{f(x)}_t \ap d_{sub} (f\circ\Gamma_k)(x) \bigr) > \frac{|t|}{m}
    \Bigr\}.
$$
For all positive integers $p$, we define the set
$$
    B^m_k(p) = A \cap \Bigl\{ x \in E : \mathcal{H}^{\deg X_k}[T^m_k(x,r)]
        \leq \frac{r^{\deg X_k}}{m} \text{~for all~} r \in (0, p^{-1}) \Bigr\}.
$$
In the case $k > 1$ we also define $Z^m_k(x,r;p)$, as the set of the points
$z = (z_1,\dots,z_{k-1},0,\dots,0) \in \mathbb{R}^N$
such that $z \in B(0,r)$ and $\Phi_x(z) \notin B^m_k(p)$.
Finally, for every positive integer $q$, we define the set
$$
    C^m_k(p,q) = B^m_k \cap \Bigl\{ x \in E : \mathcal{H}^{h_{k-1}}[Z^m_k(x,r;p)]
        \leq \frac {r^{h_{k-1}}} {m} \text{~for all~} r \in (0, q^{-1}) \Bigr\}
$$
where $h_k = \sum\limits_{i=1}^k \deg X_i$.

By Property~\ref{Property1}, the sets $B^m_k(p)$, $C^m_k(p,q)$ are measurable and
\begin{align*}
    & A = \bigcup_{p=1}^{\infty} B^m_k(p), \\
    & \mathcal{H}^\nu \Bigl( B^m_k(p) \setminus \bigcup_{q=1}^{\infty} C^m_k(p,q) \Bigr) = 0
    \quad \text{~for all~} k=1,\dots,N; m \in \mathbb{N}.
\end{align*}
Moreover, $B^m_k(p) \subset B^m_k(p+1)$, $C^m_k(p,q) \subset C^m_k(p,q+1)$.
Hence, we can choose sequences of numbers
$p_1,p_2,\dots$ and $q_1,q_2,\dots$ such that
\begin{align*}
    & \mathcal{H}^\nu(E'' \setminus B^m_k(p_m)) < \frac{\varepsilon}{2^m}, \\
    & \mathcal{H}^\nu(E'' \cap B^m_k(p_m) \setminus C^m_k(p_m,q_m) ) < \frac{\varepsilon}{2^m}
\end{align*}
for all $k = 1,\dots,N$ and for every $m$. Then
$$
    \mathcal{H}^\nu(E'' \setminus F) < 2 N \varepsilon
        \quad \text{~where~} F = \bigcap_{k=1}^{N} \bigcap_{m=1}^{\infty} C^m_k(p_m,q_m).
$$

Next we show that the limit \eqref{Eq:LDiffers} converges uniformly in $F$. 
Really, we have uniform estimates:
\begin{align*}
    & \widetilde d_{cc}^{f(g)} \bigl( f(\Gamma_k(v;t)) ,\,
        \tilde\delta^{f(v)}_t \ap d_{sub} (f\circ\Gamma_k)(v) \bigr)
            \leq \varphi_k(t), \\
    & \widetilde d_{cc}^{f(g)} \bigl( \tilde\delta^{f(u)}_t \ap d_{sub} (f\circ\Gamma_k)(u) ,\,
        \tilde\delta^{f(v)}_t \ap d_{sub} (f\circ\Gamma_k)(v) \bigr)
            \leq t \omega_k( d_{cc}(u,v) )
\end{align*}
for all $k = 1,\dots,N$, $u,v \in F$, where $\frac{\varphi(t)}{t} \to 0$ as
$t \to 0$ uniformly for $v \in F$, $\omega_k(\cdot)$ are moduli of the continuity
of the mappings $\ap d_{sub} (f\circ\Gamma_k)$.

Fix a density point $g \in F$, $m \in \mathbb{N}$ and
$0 < r < \min \{ p_m^{-1}, q_m^{-1} \}$.
For every $k = 1,\dots,N$ define
$S_k \subset \mathbb{R}^N$ as the set of the points
$(t_1,\dots, t_N) \in B(0,r)$ such that
\begin{align*}
    & \text{~either~} k > 1 \text{~and~} (t_1, \dots, t_{k-1}) \in Z^m_k(g,r; p_m), \\
    & \text{~or~} t_k \in T^m_k ( \Phi_g(t_1, \dots, t_{k-1}, 0,\dots,0), r ).
\end{align*}
Since $\mathcal{H}^{h_{k-1}}[Z^m_k(g,r;p_m)] \leq \frac{r^{h_{k-1}}}{m}$
and since
$\mathcal{H}^{\deg X_k}[T^m_k(x,r)] \leq \frac{r^{\deg X_k}}{m}$ if
$x = \Phi_g(t_1,\dots, t_{k-1}, 0, \dots, 0) \in B^m_k(p_m)$, we have
$$
    \mathcal{H}^\nu(S_k) \leq C_1 \frac{r^{h_{k-1}}}{m} r^{\nu-h_{k-1}} +
        C_2 \frac{r^{\deg X_k}}{m} r^{\nu-\deg X_k} \leq C_3 \frac{r^\nu}{m}.
$$
If we use the notation $W = \bigcup\limits_{k=1}^{N} S_k$ then
$\mathcal{H}^\nu(W) \leq C_4 \frac{r^\nu}{m}$.
Denote
\begin{align*}
    u_1 & = \Gamma_1(g; t_1), \\
    u_k & = \Gamma_k(u_{k-1}; t_k)
    \quad \text{for all} \ k = 2,\dots,N.
\end{align*}
Now, if $v \in F \setminus W$ and $u_N(t) \in F \setminus W$, we have
$$
  \widetilde d_{cc}^{f(g)} \bigl( f(\Gamma_1(g; t_1)) ,\,
  \tilde\delta^{f(g)}_{t_1} \ap d_{sub} (f\circ\Gamma_1)(g) \bigr)
  \leq \varphi_1(t_1) = C_1(|t_1|),
$$
and then, by induction,
\begin{align*}
  & \widetilde d_{cc}^{f(g)} \Bigl( f(u_k) ,\, \prod_{l=1}^{k}
    \tilde\delta^{f(g)}_{t_l} \ap d_{sub} (f\circ\Gamma_l)(g) \Bigr) \\
  & \ \leq \widetilde d_{cc}^{f(g)} \bigl( f(\Gamma_k(u_{k-1};t_k)) ,\,
    \tilde\delta^{f(u_{k-1})}_{t_k}
    \ap d_{sub} (f\circ\Gamma_k)(u_{k-1}) \bigr) \\
  & \quad + \widetilde d_{cc}^{f(g)} \Bigl(
    \tilde\delta^{f(u_{k-1})}_{t_k}
    \ap d_{sub} (f\circ\Gamma_k)(u_{k-1}) ,\, \prod_{l=1}^{k}
    \tilde\delta^{f(g)}_{t_l} \ap d_{sub} (f\circ\Gamma_l)(g) \Bigr) \\
  & \ \leq \varphi_k(t_k) +
    |t_k| \omega_k(C_{k-1}(|t_1|+\dots+|t_{k-1}|))
    = C_k(|t_1|+\dots+|t_k|),
\end{align*}
where
$\max \{ |t_1|, \dots, |t_k| \}^{-1} C_k(|t_1|+\dots+|t_k|) \to 0$
as $t \to 0$ uniformly for $g \in F$.

Denoting $\hat v = \widehat \Phi_g(t_1,\dots,t_N)$ we finally obtain
$$
    \ap \lim_{v \to g} \frac {\widetilde d_{cc}^{f(g)}(f(v), L_g(\hat v))}
    {d_{cc}^g(g,v)} = 0.
$$

If $v = \Phi_g (t_1, \dots, t_N) \in F \cap \mathcal{G}^g$ then
$d_{cc}^g(v,\hat v) = o(d_{cc}^g(g,v))$ as $v \to g$ by
Theorem~\ref{Th:PathsEstimate}.
Hence, using Property~\ref{PropertyL3} of the mapping $L_g$ we have
$$
    \widetilde d_{cc}^{f(g)}( f(v), L_g(v))
    \leq \widetilde d_{cc}^{f(g)}(f(v), L_g(\hat v)) + \widetilde d_{cc}^{f(g)}(L_g(v), L_g(\hat v))
    = o( d_{cc}^g(g,v) )
$$
as $v \to g$.
Since $r$, $m$, $\varepsilon$ are arbitrary we have
$$
    \ap \lim_{v \to g} \frac {\widetilde d_{cc}^{f(g)}(f(v),L_g(v))} {d_{cc}^g(g,v)} = 0
$$
for almost all $g \in A$.
\end{proof}

% === Subsection 3.3 ===
\subsection{Proof of theorem on approximate differentiability}
\label{Section:MainTheoremProof}

\begin{lemma} \label{Lemma:ApprDiffForLipschitz}
	Let $E \subset \mathcal{M}$ be a measurable set,
	$f: E \to \mathcal{M}$ be a measurable
	mapping, $g$ be a density point of $E$ and let
	\begin{equation} \label{lapdiff}
 		\ap \lim_{v \to g} \frac{\widetilde d_{cc}^{f(g)}(f(v),L_g(v))}{d_{cc}^g(g,v)} = 0,
	\end{equation}
	where $L_g : \mathcal{G}^g \cap \mathcal{M} \to \mathcal{G}^{f(g)}$ enjoys
	Properties~\ref{PropertyL0} -- \ref{PropertyL3}.
	If there are $\eta > 0$,
	$0 < K < \infty$ such that 
	$$
		\widetilde d_{cc}(f(u),f(v)) < K d_{cc}(u,v)
 	$$
 	for all $u, v \in B(g,\eta)$, then there exists the uniform limit
 	\begin{equation} \label{ldiff}
 		\lim_{v \to g} \frac{\widetilde d_{cc}^{f(g)}(f(v),L_g(v))}{d_{cc}^g(g,v)} = 0.
 	\end{equation}
\end{lemma}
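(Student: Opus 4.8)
The plan is to argue by contradiction and to \emph{spread} a single large value of the defect ratio over an entire quasiball: the Lipschitz hypothesis controls $f$ on that ball, the homogeneity of $L_g$ controls $L_g$ on it, and together they force the whole ball into the ``defect set''. Since $g$ is a density point of $E$ and quasiballs carry a definite proportion of the measure of the surrounding ball, this contradicts the density-zero statement encoded in the approximate limit \eqref{lapdiff}.

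Concretely, suppose \eqref{ldiff} fails. Then there are $\varepsilon_0 > 0$ and points $v_n \to g$ with $r_n = d_{cc}^g(g, v_n) \to 0$ and $\widetilde d_{cc}^{f(g)}(f(v_n), L_g(v_n)) \geq \varepsilon_0 r_n$. Fix a small $\lambda \in (0,1)$ to be chosen, and take any $w$ in the $d_{cc}^g$-ball $B_{cc}^g(v_n, \lambda r_n)$, so that $d_{cc}^g(g, w) \leq 2 Q r_n$. Writing $\widetilde{Q}$ for the quasimetric constant of $\widetilde d_{cc}^{f(g)}$ and applying the generalized triangle inequality twice, around $f(w)$ and then around $L_g(w)$, I obtain
\[
  \widetilde d_{cc}^{f(g)}(f(v_n), L_g(v_n))
  \leq \widetilde{Q}\, \widetilde d_{cc}^{f(g)}(f(v_n), f(w))
  + \widetilde{Q}^2\, \widetilde d_{cc}^{f(g)}(f(w), L_g(w))
  + \widetilde{Q}^2\, \widetilde d_{cc}^{f(g)}(L_g(w), L_g(v_n)).
\]

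Next I bound the two outer terms. For the $f$-term the Lipschitz hypothesis together with the local approximation theorem (Theorem~\ref{Th:LocApprForDcc}, applied in both $\mathcal{M}$ and $\widetilde{\mathcal{M}}$, legitimate since $f(v_n), f(w) \to f(g)$) gives $\widetilde d_{cc}^{f(g)}(f(v_n), f(w)) \leq K\, d_{cc}^g(v_n, w)[1 + o(1)] \leq K \lambda r_n [1 + o(1)]$. For the $L_g$-term I rescale by $\widetilde\delta^{f(g)}_{r_n}$ using the homogeneity Property~\ref{PropertyL1}, exactly as in the proof of Property~\ref{PropertyL3}, reducing to unit scale and picking up the modulus of continuity $\omega$ of $L_g$ there: $\widetilde d_{cc}^{f(g)}(L_g(v_n), L_g(w)) \leq r_n\, \omega(\lambda)$. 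Substituting and choosing $\lambda$ so small that $\widetilde{Q} K \lambda + \widetilde{Q}^2 \omega(\lambda) < \varepsilon_0/2$, I conclude that for all large $n$ every $w \in B_{cc}^g(v_n, \lambda r_n)$ obeys $\widetilde d_{cc}^{f(g)}(f(w), L_g(w)) \geq \tfrac{\varepsilon_0}{2\widetilde{Q}^2} r_n \geq \varepsilon_1\, d_{cc}^g(g, w)$ for a fixed $\varepsilon_1 > 0$. Thus the whole quasiball $B_{cc}^g(v_n, \lambda r_n) \cap E$ lies in the defect set $D_{\varepsilon_1} = \{ w \in E : \widetilde d_{cc}^{f(g)}(f(w), L_g(w)) > \varepsilon_1\, d_{cc}^g(g, w) \}$, which is measurable because $f$ is measurable and $L_g$ is continuous (Property~\ref{PropertyL0}).

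Finally I contradict the density-zero conclusion. Put $R_n = 2 Q r_n$, so $B_{cc}^g(v_n, \lambda r_n) \subset B_{cc}^g(g, R_n)$; by the Ball--Box theorem and the doubling property (transported from $d_{cc}$ via the comparability of $d_{cc}^g$ and $d_{cc}$ near $g$, Theorems~\ref{Th:LocApprForDcc} and \ref{Th:CmpDinftyAndDcc}) the quasiball has measure at least $c\lambda^\nu R_n^\nu$. Since $g$ is a density point of $E$, one has $\mathcal{H}^\nu(B_{cc}^g(g, R_n) \setminus E) = o(R_n^\nu)$, whence $\mathcal{H}^\nu(E \cap B_{cc}^g(v_n, \lambda r_n)) \geq \tfrac12 c \lambda^\nu R_n^\nu$ for large $n$, and therefore
\[
  \frac{\mathcal{H}^\nu(D_{\varepsilon_1} \cap B_{cc}^g(g, R_n))}{\mathcal{H}^\nu(B_{cc}^g(g, R_n))}
  \geq \frac{\tfrac12 c \lambda^\nu R_n^\nu}{C' R_n^\nu} = c'' \lambda^\nu > 0
\]
along $R_n \to 0$. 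Hence $D_{\varepsilon_1}$ does not have density zero at $g$, contradicting \eqref{lapdiff} and establishing \eqref{ldiff}. I expect the main obstacle to be purely technical: the systematic bookkeeping of the quasimetric constants and of the passages between the ambient metrics $d_{cc}, \widetilde d_{cc}$ and the tangent-cone metrics $d_{cc}^g, \widetilde d_{cc}^{f(g)}$, and arranging the single parameter $\lambda$ to absorb all three error contributions at once.
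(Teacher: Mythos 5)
Your proof is correct, but note that it is essentially the paper's argument run in contrapositive form rather than a reproduction of it. The paper argues directly: for an arbitrary $x \in E$ near $g$ it combines the quantified content of \eqref{lapdiff} with the density of $E$ at $g$ to find a \emph{good} point $z$ in the small ball $B(x,\varepsilon r)$, where $r = d_{cc}^g(g,x)/(1-\varepsilon)$ and good means $\widetilde d_{cc}^{f(g)}(f(z),L_g(z)) < \varepsilon\, d_{cc}^g(g,z)$, and then transfers the estimate to $x$ through the chain $f(x) \to f(z) \to L_g(z) \to L_g(x)$ — using exactly your two ingredients, the Lipschitz bound on $f$ and the dilation homogeneity of $L_g$ (Property~\ref{PropertyL1}) combined with its modulus of continuity $\omega$ at unit scale. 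You instead start from a hypothetical bad point $v_n$ and spread the defect over the sub-ball $B^g_{cc}(v_n,\lambda r_n)$, then contradict the density-zero statement; your measure counting (Ball--Box lower bound on the sub-ball plus $\mathcal{H}^\nu(B_{cc}(g,R)\setminus E) = o(R^\nu)$ at the density point) is the mirror image of the counting the paper uses to guarantee that $B(x,\varepsilon r)$ meets the good set. The trade-off is worth recording: the paper's direct version produces an explicit bound of the form $\widetilde d_{cc}^{f(g)}(f(x),L_g(x)) \leq d_{cc}^g(g,x)\bigl(\omega(\cdot) + \varepsilon + (\varepsilon + K + 1)\,o(1)\bigr)$ valid simultaneously for all $x$ in a ball around $g$, and it is this quantitative, uniform form that is reused later (in the second and third steps of the proof of Theorem~\ref{Theorem:Main}, where the $o(\cdot)$ must be uniform on the sets $E_3$); your contradiction argument proves the stated pointwise limit at the fixed density point $g$ — which is all the lemma asserts — but would need to be unwound back into the quantitative form to serve those later uniformity claims. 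Two harmless slips in your write-up: Theorem~\ref{Th:LocApprForDcc} yields an additive error $o(r_n)$ at scale $r_n$, not the multiplicative factor $[1+o(1)]$ you wrote (immaterial here, since every quantity in your chain is of order $r_n$ and the $o(r_n)$ is absorbed after $\lambda$ is fixed), and the distances $\widetilde d_{cc}^{f(g)}$, $d_{cc}^g$ are genuine length metrics, so your quasimetric constant $\widetilde{Q}$ may simply be taken equal to $1$.
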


\begin{proof}
Let $\omega(t)$ be a modulus of continuity of
$L_g : B(g,2) \cap \mathcal{G}^g \to \mathcal{G}^{f(g)}$. Then if
$d_{cc}^g(u,v) < d_{cc}^g(g,v) < \eta$,
by Property~\ref{PropertyL1}, we have
\begin{multline*}
	\widetilde d_{cc}^{f(g)} \bigl( L(u), L(v) \bigr)
	= d_{cc}^g(g,v) \, \widetilde d_{cc}^{f(g)} \bigl( L(\delta^g_{d_{cc}^g(g,v)^{-1}} u),
		L(\delta^g_{d_{cc}^g(g,v)^{-1}} v) \bigr) \\
	\leq d_{cc}^g(g,v) \, \omega \bigl(d_{cc}^g(\delta^g_{d_{cc}^g(g,v)^{-1}} u,
		\delta^g_{d_{cc}^g(g,v)^{-1}} v) \bigr)
	= d_{cc}^g(g,v) \, \omega \Bigl( \frac{d_{cc}^g(u,v)}{d_{cc}^g(g,v)} \Bigr).
\end{multline*}
Suppose $0 < \varepsilon < 1$. Fulfillment of the condition \eqref{lapdiff}
means there exists $\delta > 0$ such that for any $0 < r < \delta$ and for the set
$$
	W = \{ z \in E: \widetilde d_{cc}^{f(g)}(f(z), L_g(z)) < \varepsilon d_{cc}^g(g,z) \}
$$
we have $\mathcal{H}^\nu( B(g,r) \setminus W) < r^\nu \varepsilon^\nu$.
If we take $x \in B(g, \delta(1-\varepsilon)) \cap E$ and 
$r = d_{cc}^g(g,x) / (1 - \varepsilon)$ then
$B(x, r\varepsilon) \subset B(g, r)$. It follows
$B(x, r\varepsilon) \cap W \not= \emptyset$, hence, we can choose
$z \in B(x, r\varepsilon) \cap E$. By Theorem~\ref{Theorem:MetricsDeviation}
we have $d_{cc}(x,z) = o(d_{cc}(g,x)) = o(d_{cc}^g(g,x))$ and
\begin{multline*}
	\widetilde d_{cc}^{f(g)}(f(x),f(z))
	= \widetilde d_{cc}(f(x),f(z)) + o \bigl( \widetilde d_{cc}(f(g),f(x)) \bigr) \\
	= \widetilde d_{cc}(f(x),f(z)) + o( d_{cc}^g(g,x)),
\end{multline*}
where all $o(\cdot)$ are uniform. Thus, we infer
\begin{align*}
	& \widetilde d_{cc}^{f(g)} (L_g(x), f(x)) \\
	& \quad \leq \widetilde d_{cc}^{f(g)}( L_g(x), L_g(z) )
	             + \widetilde d_{cc}^{f(g)}( L_g(z), f(z) )
	             + \widetilde d_{cc}^{f(g)}( f(z), f(x) ) \\
	& \quad \leq d_{cc}^g(g,x)\, \omega \Bigl( \frac{d_{cc}^g(x,z)}{d_{cc}^g(g,x)} \Bigr)
	             + \varepsilon \, d_{cc}^g(g,z)
	             + \widetilde d_{cc}(f(x),f(z))
	             + o(d_{cc}^g(g,x)) \\
	& \quad \leq d_{cc}^g(g,x)\, \omega (1) + \varepsilon \, d_{cc}^g(g,x)
	             + \varepsilon \, d_{cc}^g(x,z)
	             + K d_{cc}(x,z)
	             + o(d_{cc}^g(g,x)) \\
	& \quad \leq d_{cc}^g(g,x) \bigl( \omega (1) + \varepsilon
	             + (\varepsilon+K+1) o(1) \bigr)
\end{align*}
where all $o(\cdot)$ are uniform.
\end{proof}

\begin{remark}
	If we prove that the mapping $L_g$ is the approximate differential of $f$
	then from Lemma~\ref{Lemma:ApprDiffForLipschitz} it follows that the
	Lipschitz mapping is differentiable almost everywhere since the claims
	of Lemmas~\ref{Lemma:HorizDerivatives}, \ref{Lemma:DerivativesAlongCurves},
	\ref{Lemma:CoordinateDifferentiability} and \ref{Lemma:ApprDiffForLipschitz}
	hold almost everywhere in $\dom f$. This gives us an alternative proof
	of Theorem~\ref{Theorem:StepanoffLike}.
\end{remark}

Now we have all necessary tools to complete the proof of
Theorem~\ref{Theorem:Main}.
\begin{proof}[Proof of Theorem~\ref{Theorem:Main}]
\textsc{1st step.}
In the conditions of Theorem~\ref{Theorem:Main} the claims of
Lemmas~\ref{Lemma:HorizDerivatives},
\ref{Lemma:DerivativesAlongCurves} and \ref{Lemma:CoordinateDifferentiability} hold.
In particular $A_j = \dom \ap X_j f$ is a measurable set, $j=1,\dots,\dim H_1$,
$f$ is approximately differentiable along the curves $\Gamma_k(g;t)$ at $t=0$,
$k = 1,\dots,N$ almost everywhere in the set
$A = \bigcap\limits_{i=1}^{\dim H_1} A_j$
and relations \eqref{Eq:CurveDerivativeStructure} and \eqref{Eq:LDiffers} hold.

If \eqref{Eq:LDiffers} holds at the point $g \in A$
then, in view of structure of $L_g$ \eqref{Eq:LMapping},
estimate \eqref{Eq:CurveDerivativeEstimate} implies
\begin{align}
	& \ap \varlimsup_{v \to g} \frac {\widetilde d_{cc}(f(g), f(v))} {d_{cc}(g,v)} \notag \\
	& \,\, \leq \ap \varlimsup_{v \to g}
		\frac {\widetilde d_{cc}^{f(g)}(f(g), L_g(v))
		       + \widetilde d_{cc}^{f(g)}(L_g(v), f(v)) \bigr)
		       + o \bigl( \widetilde d_{cc}^{f(g)}(L_g(v), f(v)) \bigr) }
		{d_{cc}^g(g,v)}
	 \notag \\
	& \,\, \leq C \sup_{d_{cc}^g(g,v) \leq 1} \Bigl(
		\prod_{k=1}^N \tilde \delta^{f(g)}_{t_k} \ap d_{sub} (f \circ \Gamma_k)(g)
	\Bigr)
	< \infty. \label{TopCondition}
\end{align}
Hence, the left hand side of \eqref{TopCondition} is finite almost everywhere
in $A$. Applying Theorem~\ref{Theorem:SetFamilyLipschitz}, we obtain a countable
family of measurable sets covering $A$ up to the set of measure $0$ such that
the restriction of $f$ to each of them is a Lipschitz mapping.

Let $E$ be one set of this countable family and let
$L_g : \mathcal{G}^g \cap \mathcal{M} \to \mathcal{G}^{f(g)}$
be defined at almost all points of $E \subset A$.
To prove the theorem it remains to verify that $L_g$
is a homomorphism of the Lie groups.
In particular, we need to prove that given two points
$\hat u, \hat v \in \mathcal{G}^g$ we have
\begin{equation} \label{Eq:LIsHomomorphism}
	L_g(\hat u \cdot \hat v) = L_g(\hat u) \cdot L_g(\hat v).
\end{equation}

\textsc{2nd step.}
Let $g \in E$ be a density point where \eqref{Eq:LDiffers} holds
and suppose $B_{cc}(g,r_g) \subset \mathcal{G}^g$.
Then given $\hat v \in B_{cc}(g,r_g)$, $t \in [-r_g,r_g]$
there exists $v'_t = v'_t(g) \in E$, such that
$d_{cc}^g(\delta^g_t \hat v, v'_t) = o(t)$.
By Lemma~\ref{Lemma:ApprDiffForLipschitz}, we have
$$
	\ap \lim_{t \to 0} \frac {\widetilde d_{cc}^{f(g)}(f(v'_t), L_g(v'_t))} {t}
	= \lim_{t \to 0} \frac {\widetilde d_{cc}^{f(g)}(f(v'_t), L_g(v'_t))} {t}
	= 0.
$$
Then, using Property~\ref{PropertyL3} of the mapping $L_g$, we derive
\begin{multline*}
	\widetilde d_{cc}^{f(g)}(f(v'_t), L_g(\delta^g_t \hat v))
	\leq \widetilde d_{cc}^{f(g)}(f(v'_t), L_g(v'_t))
	     + \widetilde d_{cc}^{f(g)}(L_g(v'_t), L_g(\delta^g_t \hat v))\\
	=  o(d_{cc}^g(g,v'_t))+o(d_{cc}^g(g,\delta^g_t \hat v))
	= o(t) \quad \text{as~} t \to 0.
\end{multline*}
Next, consider two points $\hat u$, $\hat v \in B_{cc}(g,r_g / 2)$
and their product $\hat u \cdot \hat v$.
If  $\hat u = \widehat \Phi_g(s_1,\dots,s_N)$
and $\hat v = \widehat \Phi_g(r_1,\dots,r_N)$
then define by induction
\begin{align*}
	& u_1(t)(\cdot) = \Phi_1(t s_1)(\cdot); \\
	& u_k(t)(\cdot) = \Phi_k(t s_k)\circ u_{k-1}(t) (\cdot),
	  \quad k = 2, \dots, N; \\
	& v_1(t)(\cdot) = \Phi_1(t r_1)(\cdot); \\
	& v_k(t)(\cdot) = \Phi_k(t r_k) \circ v_{k-1}(t) (\cdot),
	  \quad k = 2, \dots, N.
\end{align*}
From the structure of functions $\Phi_k(\cdot)$ and from
Theorem~\ref{Th:PathsEstimate} it follows
\begin{align*}
	& d_{cc}^g(u_N(t)(g), \delta^g_t \hat u) = o(t), \\
	& d_{cc}^g(v_N(t)(g), \delta^g_t \hat v) = o(t), \\
	& d_{cc}^g(v_N(t) \circ u_N(t) (g), \delta^g_t (\hat u \cdot \hat v)) = o(t)
	  \quad \text{as~} t \to 0.
\end{align*}
As long as $g$ is a density point of $E$ we can find $w'_k(t)$,
$k = 1,\dots,2N$, such that
$d_{cc}^g \bigl( u_k(t)(g), w'_k(t) \bigr) = o(t)$
and
$d_{cc}^g \bigl( v_k(t) (u_N(t)(g)), w'_{N+k}(t) \bigr) = o(t)$ as $t \to 0$,
$k = 1,\dots,N$.
By the same arguments as above we conclude that
$$
	\widetilde d_{cc}^{f(g)}( f(w'_{2N}(t)), L_g(\delta^g_t [\hat u \cdot \hat v]) ) = o(t)
	\quad \text{as~} t \to 0.
$$
All we need is to verify that
\begin{equation} \label{EstimOfTruth}
	\widetilde d_{cc}^{f(g)}( f(w'_{2N}(t)), L_g(\delta^g_t \hat u) \cdot L_g(\delta^g_t \hat v) ) = o(t)
	\quad \text{as~} t \to 0.
\end{equation}

\textsc{3rd step.}
To prove \eqref{EstimOfTruth}
we assume $\mathcal{H}^\nu(E) < \infty$ and restrict the set $E$ applying
Egorov's and Luzin's theorems.

Recall that the mapping $x \mapsto \ap d_{sub}f \circ \Gamma_k(x)$
is defined in $E$, is measurable.
By Lemma~\ref{Lemma:ApprDiffForLipschitz} we get
\begin{equation} \label{Eq:PartialDD}
	\lim_{t \to 0} \widetilde d_{cc}^{f(x)}( f \circ \Phi_k(t)(x),
		\delta^{f(x)}_t \ap d_{sub} (f \circ \Gamma_k)(x) )
	= 0
\end{equation}
for every density point $x \in E$ as $t \to 0$,
$\Phi_k(t)(x) \in E$.

First, by Luzin's theorem there is
a closed set $E_1 \subset E$ such that
$\mathcal{H}^\nu(E \setminus E_1) < \varepsilon / 3$ and

\begin{itemize}
\item[(a)] 
all the mappings
$x \mapsto \ap d_{sub}f \circ \Gamma_k(x)$ are uniformly continuous
in $E_1$, $k = 1,\dots,N$.
\end{itemize}

Next, by Egorov's theorem there is a measurable set $E_2 \subset E_1$
such that $\mathcal{H}^\nu(E_1 \setminus E_2) < \varepsilon / 3$ and

\begin{itemize}
\item[(b)]
the
limit \eqref{Eq:PartialDD} converges uniformly on $E_2$, $k = 1,\dots,N$.
\end{itemize}

Now we consider a family of measurable functions
$$
 E_2 \owns x \to \psi_t(x)=\frac{\mathcal{H}^\nu(B_{cc}(x,t) \setminus E)}{\mathcal{H}^\nu(B_{cc}(x,t))}.
$$
We have that $\lim\limits_{t \to 0} \psi_t(x) = 0$ at almost all points of $x \in E_2$.
By Egorov's theorem there exists a measurable set $E_3 \subset E_2$ such that
$\mathcal{H}^\nu(E_2 \setminus E_3) < \varepsilon / 3$ and
the limit
\begin{itemize}
\item[(c)]
$\lim\limits_{t \to 0} \psi_t(x) = 0$ is uniform in $E_3$.
\end{itemize}

Property~(c) allows us to repeat the arguments
of the 2nd step with all $o(\cdot)$ uniform in $E_3$.
Therefore, if $x \in E_3$ we have
\begin{align*}
	& \widetilde d_{cc}^{f(x)} \bigl(
	    f(w'_1(t)(x)),
	    \delta^{f(x)}_{t \sigma_1} \ap d_{sub} (f \circ \Gamma_1)(x)
	  \bigr) = o(t), \\
	& \widetilde d_{cc}^{f(x)} \bigl(
	    f(w'_{k}(t)(w'_{k-1}(t))(x)),
	    \delta^{f(w'_{k-1}(t))}_{t \sigma_k} \ap d_{sub} (f \circ \Gamma_k)(w'_{k-1}(t)))
	  \bigr) = o(t), \\
	& \widetilde d_{cc}^{f(x)} \bigl(
	    f(w'_{N+1}(t)(w'_N(t))(x)),
	    \delta^{f(w'_N(t))}_{t \tau_1} \ap d_{sub} (f \circ \Gamma_1)(w'_N(t)))
	  \bigr) = o(t), \\
	& \widetilde d_{cc}^{f(x)} \! \bigl( 
	    f(w'_{\!N+k}\!(t)(w'_{\!N+k-1}\!(t))(x)),
	    \delta^{f(w'_{\!N+k-1}(t))}_{t \tau_k} \ap d_{sub} (f \!\circ\! \Gamma_k)(w'_{\!N+k-1}\!(t)))
	  \bigr) \!=\! o(t)
\end{align*}
as $t \to 0$, $k = 2,\dots,N$ and all $o(\cdot)$ are uniform with respect
to $x \in E_3$.
Here the coefficients $\sigma_k$ and $\tau_k$ are defined from \eqref{Eq:LMapping}
for the points $\hat u$ and $\hat v$ respectively.
Then, by properties (a) and (b) the relation
\begin{multline*}
	\widetilde d_{cc}^{f(x)} \Bigl(
		f(w'_{2N}(t)(x)),
		\prod_{k=1}^N \delta^{f(x)}_{t \sigma_k} \ap d_{sub} (f \circ \Gamma_k)(x)
		\cdot
		\prod_{k=1}^N \delta^{f(x)}_{t \tau_k} \ap d_{sub} (f \circ \Gamma_k)(x)
	\Bigr) \\
	= \widetilde d_{cc}^{f(x)} \bigl( f(w'_{2N}(t)(x)),
		\delta^{f(x)}_t L_x(\hat u) \cdot \delta^{f(x)}_t L_x(\hat v) \bigr)
	= o(t)
\end{multline*}
is uniform with respect to $x \in E_3$.
Finally,
\begin{align*}
	& t\, \widetilde d_{cc}^{f(x)} \bigl( L_x(\hat u \cdot \hat v), L_x(\hat u) \cdot L_x(\hat v) \bigr) \\
	& \quad = \widetilde d_{cc}^{f(x)} \bigl( \delta^{f(x)}_t L_x(\hat u \cdot \hat v),
	    \delta^{f(x)}_t L_x(\hat u) \cdot \delta^{f(x)}_t L_x(\hat v) \bigr) \\
	& \quad \leq \widetilde d_{cc}^{f(x)} \bigl( \delta^{f(x)}_t L_x(\hat u \cdot \hat v),
	    f(w'_{2N}(t)(x)) \bigr) \\
	& \qquad {} + \widetilde d_{cc}^{f(x)} \bigl( f(w'_{2N}(t)(x)),
	    \delta^{f(x)}_t L_x(\hat u) \cdot \delta^{f(x)}_t L_x(\hat v) \bigr) = o(t)
\end{align*}
and \eqref{TopCondition} is proved for $x \in E_3$. Since $\varepsilon$
is an arbitrary positive number, the Theorem is proved.
\end{proof}

\section{Application: an area formula}

Suppose that $x = \exp \Bigl( \sum\limits_{i=1}^N x_i X_i \Bigr)(g)$. Define
a quantity
\begin{multline}
	d_\rho (g,x) = \max \Bigl\{
	\Bigl( \sum_{j=1}^{\dim H_1} |x_j|^2 \Bigr)^{\frac 12},
	\Bigl( \sum_{j=\dim H_1 +1}^{\dim H_2} |x_j|^2 \Bigr)^{\frac 14},
	\dots, \\
	\Bigl( \sum_{j=\dim H_{M-1} + 1}^{N} |x_j|^2 \Bigr)^{\frac {1}{2 M}}
	\Bigr\}.
\end{multline}

It is easy to see that $d_\rho$ is locally equivalent to $d_\infty$.
Since we have already proved that $d_\infty$ and $d_{cc}$ are locally
equivalent, the following statement also holds.

\begin{proposition}
Let $g \in \mathcal{M}$. There is a compact neighborhood
$U(g) \subset \mathcal{M}$ such that
$$
	c_1 d_{cc}(u,v) \leq d_\rho(u,v) \leq c_2 d_{cc}(u,v)
$$
for all $u$, $v$ in $U(g)$, where constants $0 < c_1 \leq c_2 < \infty$
independent of $u$, $v \in U(g)$.
\end{proposition}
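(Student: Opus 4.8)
The plan is to reduce the comparison of $d_\rho$ with $d_{cc}$ to the already-established comparison of $d_\infty$ with $d_{cc}$ (Theorem~\ref{Th:CmpDinftyAndDcc}), by first checking the purely algebraic equivalence of $d_\rho$ and $d_\infty$. I would choose $U(g)$ to be a compact neighborhood contained in a neighborhood $U(g_0)$ on which the coordinates of the first kind are defined for every pair of its points (as in Subsection~\ref{Section:Coord1}), and small enough that Theorem~\ref{Th:CmpDinftyAndDcc} applies.

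First I would observe that both $d_\rho$ and $d_\infty$ are functions of the \emph{same} first-kind coordinate tuple. Writing $v = \exp\bigl(\sum_{i=1}^N x_i X_i\bigr)(u)$ for $u,v \in U(g)$, one has $d_\infty(u,v) = \max_i |x_i|^{1/\deg X_i}$, while $d_\rho(u,v)$ is the maximum over the homogeneity layers of $\bigl(\sum_{\deg X_j = k}|x_j|^2\bigr)^{1/(2k)}$. Grouping the coordinates by degree, within the $k$-th layer (the indices $j$ with $\deg X_j = k$, of which there are $n_k = \dim H_k - \dim H_{k-1}$) I would compare the Euclidean and the sup norms, $\|\cdot\|_\infty \le \|\cdot\|_2 \le \sqrt{n_k}\,\|\cdot\|_\infty$. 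Raising these to the power $1/k$ shows that the $k$-th layer contribution to $d_\rho$ lies between the corresponding contribution to $d_\infty$ and $n_k^{1/(2k)}$ times it; taking the maximum over $k$ gives
\[
   d_\infty(u,v) \le d_\rho(u,v) \le C\, d_\infty(u,v), \qquad C = \max_{1 \le k \le M} n_k^{1/(2k)} \le \sqrt{N}.
\]
These constants depend only on the layer dimensions, so they are automatically uniform over all pairs $u,v \in U(g)$.

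Finally I would combine this with Theorem~\ref{Th:CmpDinftyAndDcc}, which on the compact neighborhood $U(g)$ provides constants $0 < C_1 \le C_2 < \infty$ with $C_1 d_\infty(u,v) \le d_{cc}(u,v) \le C_2 d_\infty(u,v)$. Chaining the two equivalences yields $c_1 d_{cc}(u,v) \le d_\rho(u,v) \le c_2 d_{cc}(u,v)$ with $c_1 = C_2^{-1}$ and $c_2 = C C_1^{-1}$, as claimed.

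I do not expect a genuine obstacle here: all the sub-Riemannian geometry is already packaged in Theorem~\ref{Th:CmpDinftyAndDcc}, and the remaining input is the elementary, dimension-dependent equivalence of the $\ell^2$- and $\ell^\infty$-type quasinorms in each layer. The only point requiring a little care is bookkeeping: one must fix the neighborhood inside the common domain of the first-kind coordinates so that $d_\rho$, $d_\infty$ and $d_{cc}$ are simultaneously defined and the constants of Theorem~\ref{Th:CmpDinftyAndDcc} are available uniformly.
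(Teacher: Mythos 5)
Your proposal is correct and takes essentially the same route as the paper, which obtains the statement precisely by noting that $d_\rho$ is locally equivalent to $d_\infty$ and then chaining with the equivalence of $d_\infty$ and $d_{cc}$ from Theorem~\ref{Th:CmpDinftyAndDcc}. Your layer-wise comparison of the $\ell^2$- and $\ell^\infty$-norms (with constant $\max_k n_k^{1/(2k)} \le \sqrt{N}$) simply supplies the elementary detail the paper dismisses as ``easy to see.''
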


\begin{corollary}
Quantity $d_\rho$ is a quasimetric.
\end{corollary}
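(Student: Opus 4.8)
The plan is to deduce all three defining properties of a quasimetric for $d_\rho$ directly from the two-sided comparison with $d_{cc}$ established in the preceding Proposition, exactly in the manner in which the quasimetric properties of $d_2$ were obtained earlier from its comparison with $d_\infty$ (the Corollary following Proposition~\ref{Prop:MetricsEquality}). The crucial input is that $d_{cc}$ is a genuine metric on $U(g)$: it is symmetric and satisfies the ordinary triangle inequality, being defined as an infimum of lengths of horizontal paths, and it is finite by Theorem~\ref{Th:Rashevsky}. Fixing a compact neighborhood $U(g)$ on which $c_1 d_{cc}(u,v) \leq d_\rho(u,v) \leq c_2 d_{cc}(u,v)$ holds with $0 < c_1 \leq c_2 < \infty$, I would transfer each property in turn.

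For positivity and definiteness, note that $d_\rho(u,v) \geq 0$ is immediate from its definition as a maximum of nonnegative quantities, and if $d_\rho(u,v) = 0$ then $d_{cc}(u,v) \leq c_1^{-1} d_\rho(u,v) = 0$, whence $u = v$ since $d_{cc}$ is a metric (the converse being clear). For the symmetry, I would write
$$
	d_\rho(u,v) \leq c_2 d_{cc}(u,v) = c_2 d_{cc}(v,u) \leq c_1^{-1} c_2 \, d_\rho(v,u),
$$
so that $d_\rho$ is symmetric up to the constant $c_1^{-1} c_2$. For the generalized triangle inequality, combining the comparison with the triangle inequality for $d_{cc}$ gives
$$
	d_\rho(u,v) \leq c_2 d_{cc}(u,v)
	\leq c_2 \bigl( d_{cc}(u,w) + d_{cc}(w,v) \bigr)
	\leq c_1^{-1} c_2 \bigl( d_\rho(u,w) + d_\rho(w,v) \bigr)
$$
for all $u$, $w$, $v \in U(g)$, so one may take the quasimetric constant to be $Q_\rho = c_1^{-1} c_2 \geq 1$.

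There is essentially no obstacle here: the argument is a routine transfer of metric properties through a comparison equivalence, and it parallels verbatim the Corollary following Proposition~\ref{Prop:MetricsEquality}. The only point worth stating explicitly is that the comparison constants $c_1$, $c_2$, and hence $Q_\rho$, are uniform on the compact neighborhood $U(g)$, which is precisely what the preceding Proposition provides.
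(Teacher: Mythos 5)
Your proposal is correct and follows exactly the route the paper intends: the corollary is stated without proof precisely because it is the same routine transfer of the quasimetric properties through the two-sided comparison with $d_{cc}$ (a genuine metric) that was carried out for $d_2$ in the corollary following Proposition~\ref{Prop:MetricsEquality}. The only minor remark is that your symmetry estimate $d_\rho(u,v) \leq c_1^{-1} c_2\, d_\rho(v,u)$ is slightly weaker than what actually holds --- since $u = \exp\bigl(\sum_i y_i X_i\bigr)(g)$ implies $g = \exp\bigl(-\sum_i y_i X_i\bigr)(u)$, the quantity $d_\rho$ is exactly symmetric, just like $d_\infty$ --- but the up-to-constant version suffices for the quasimetric definition used in the paper.
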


Denote an open ball in the quasimetric $d_\rho$ of raduis $r$ with center
in $x$ as $\Bx_\rho(x,r)$.
Define the (spherical) Hausdorff measure of a set $E$ with respect
to metric $d_\rho$ as
\begin{equation*}
	\mathcal{H}_\rho^k(E) = \lim_{\varepsilon \to 0+} \inf \Bigl\{
	\sum_i r^k_i : E \subset \bigcup_i \Bx_\rho(x_i, r_i), r_i < \varepsilon
	\Bigr\}.
\end{equation*}

Since Ball-Box theorem holds, Hausdorff measures constructed with respect
to $d_{cc}$ and with respect to $d_\rho$ are absolutely continuous one with
respect to another. We have
$$
	d \mathcal{H}_\rho^\nu(x) = \mathcal{D}_{\rho,cc}(x) \, d \mathcal{H}_{cc}^\nu(x),
	\quad x \in \mathcal{M},
$$
where $\mathcal{D}_{\rho,cc} : \mathcal{M} \to (0, \infty]$
is absolutely continuous and strictly positive. So, we could equally obtain
our results for $d_\rho$.

For Lipschitz mappings of Carnot--Carath\'eodory mappings the following
area formula holds.

\begin{theorem}[\cite{Bib:Karm-Area}] \label{Th:LipschitzArea}
Suppose $E \subset \mathcal{M}$ is a measurable set, and the mapping
$\varphi : E \to \widetilde{\mathcal{M}}$ is Lipschitz with respect to
sub-Riemannian quasimetrics $d_\rho$ and $\widetilde d_\rho$. Then the area
formula
\begin{equation} \label{Eq:LipschitzArea}
	\int\limits_E f(x) \mathcal{J}^{SR}(\varphi,x) d \mathcal{H}_\rho^\nu(x)
	= \int\limits_{\varphi(E)} \sum_{x :\: x \in \varphi^{-1}(y)} f(x) d\mathcal{H}_\rho^\nu(y)
\end{equation}
holds, where $f : F \to \mathbb{M}$ (here $\mathbb{M}$ is an arbitrary Banach
space) is such that function $f(x) \mathcal{J}^{SR}(\varphi,x)$ is
integrable, and 
\begin{equation} \label{Eq:JSR}
	\mathcal{J}^{SR}(\varphi,x) = \sqrt{\det (D\varphi(x)^* D\varphi(x))}
\end{equation}
is the sub-Riemannian Jacobian of $\varphi$ at $x$.
\end{theorem}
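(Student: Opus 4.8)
The plan is to follow the classical Federer scheme, adapted to the sub-Riemannian setting where the role of the linear differential is played by the homogeneous homomorphism of local Carnot groups supplied by Theorem~\ref{Theorem:RademacherLike}. Since $\varphi$ is Lipschitz, it is differentiable $\mathcal{H}_\rho^\nu$-almost everywhere in $E$, and at each such point $x$ the differential $D\varphi(x) : \mathcal{G}^x \to \mathcal{G}^{\varphi(x)}$ is a horizontal homomorphism commuting with the dilations $\delta^x_t$ and $\tilde\delta^{\varphi(x)}_t$. The first step is to settle the homogeneous model case: for a fixed horizontal homomorphism $L$ of the tangent cones I would show that $L$ pushes the Hausdorff measure forward by the constant factor $\mathcal{J}^{SR}(L)$, i.e. $\mathcal{H}_\rho^\nu(L(A)) = \mathcal{J}^{SR}(L)\,\mathcal{H}_\rho^\nu(A)$ for Borel $A$ when $L$ is injective, counting multiplicities otherwise. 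This rests on the commutation of $L$ with dilations together with the grading of the Lie algebra, which forces $L$ to be graded, hence block-diagonal with respect to the layers $V_1,\dots,V_M$; the factor $\sqrt{\det(DL^* DL)}$ then records exactly the scaling induced on each homogeneous layer.

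The second step is the localization. Fixing $\varepsilon>0$ I would invoke Lusin's and Egorov's theorems (as in the proof of Lemma~\ref{Lemma:CoordinateDifferentiability}) to find a compact set on which $x \mapsto D\varphi(x)$ is continuous and the differentiability estimate $\widetilde d_{cc}^{\varphi(x)}(\varphi(v), D\varphi(x)(v)) = o(d_{cc}^x(x,v))$ holds uniformly. Partitioning this set into pieces of small diameter, on each piece $\varphi$ is $(1\pm\varepsilon)$-bi-Lipschitz comparable to the affine model $v \mapsto D\varphi(x_0)(v)$; here I would lean on Theorem~\ref{Th:LocApprForDcc} and Theorem~\ref{Theorem:MetricsDeviation} to replace the varying tangent-cone metrics $d_{cc}^x$ by the ambient $d_{cc}$ up to $o(\varepsilon)$ errors. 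Applying the model case on each piece and summing yields
\begin{equation*}
    (1-\varepsilon)^\nu \int_A \mathcal{J}^{SR}(\varphi,x)\, d\mathcal{H}_\rho^\nu(x)
    \leq \int_{\varphi(A)} \#(\varphi^{-1}(y)\cap A)\, d\mathcal{H}_\rho^\nu(y)
    \leq (1+\varepsilon)^\nu \int_A \mathcal{J}^{SR}(\varphi,x)\, d\mathcal{H}_\rho^\nu(x),
\end{equation*}
where $\#(\varphi^{-1}(y)\cap A)$ is the Banach indicatrix; letting $\varepsilon \to 0$ gives the area formula for $f = \chi_A$.

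Finally I would pass from characteristic functions to the general integrand. Writing a nonnegative measurable $f$ as an increasing limit of simple functions and using the monotone convergence theorem together with countable additivity over the partition extends the formula to all nonnegative $f$, and then to Banach-space-valued $f$ with $f\,\mathcal{J}^{SR}(\varphi,\cdot)$ integrable by composing with continuous linear functionals and invoking Hahn--Banach. The multiplicity bookkeeping requires decomposing $E$ into countably many Borel sets on which $\varphi$ is injective, which is possible by a standard measurable-selection argument once $\varphi$ is known to be Lipschitz.

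I expect the main obstacle to be the model case of the first step: proving that the metric Hausdorff measure is scaled by precisely $\sqrt{\det(D\varphi^* D\varphi)}$ rather than by some other homogeneous invariant of $L$. Because the tangent cones $\mathcal{G}^x$ and $\mathcal{G}^{\varphi(x)}$ and their homogeneous norms vary with $x$, the identification of the metric Jacobian with the algebraic expression \eqref{Eq:JSR} must be carried out uniformly in $x$, and this is the step where the equiregularity of the space and the precise structure of homogeneous homomorphisms are genuinely used.
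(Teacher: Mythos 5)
You should first note that the paper itself contains no proof of Theorem~\ref{Th:LipschitzArea}: it is imported verbatim from Karmanova \cite{Bib:Karm-Area}, so your proposal can only be compared with the cited work, whose architecture (Federer's scheme: a.e.\ differentiability via Theorem~\ref{Theorem:RademacherLike}, a model computation for the homogeneous differential, Lusin--Egorov localization with $(1\pm\varepsilon)$-comparison, extension from indicators to general integrands) your outline does reproduce. The genuine gap is the model case, which your own closing paragraph correctly identifies as the main obstacle but then leaves unproved. Gradedness of a horizontal homomorphism $L$ gives block-diagonality and hence a factorization of the determinant, but it does not identify the scaling factor of the \emph{spherical} Hausdorff measure with $\sqrt{\det(L^*L)}$. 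The image $L(\mathcal{G}^x)$ is a homogeneous subgroup of $\mathcal{G}^{\varphi(x)}$ of homogeneous dimension $\nu$, and $\mathcal{H}^\nu_\rho$ on it is built from traces of ambient $\widetilde d_\rho$-balls on that subgroup; a priori the ratio of this measure to the intrinsic Haar measure of the subgroup (the ``spherical factor'') can depend on the position of the subgroup, in which case the metric Jacobian would be some homogeneous invariant of $L$ other than \eqref{Eq:JSR}. Proving that for the layerwise-Euclidean gauge $d_\rho$ this factor is constant over the family of image subgroups arising as differentials --- which is precisely why the theorem is stated for $d_\rho$ rather than for $d_\infty$ or $d_{cc}$, a point your sketch does not engage --- is the actual content of \cite{Bib:Karm-Area}. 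As written, your first step restates the theorem in the homogeneous model rather than proving it.

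A second concrete gap: your sandwich inequality presupposes that on each piece $\varphi$ is two-sidedly comparable to the model $v \mapsto D\varphi(x_0)(v)$, which fails wherever $D\varphi(x)$ is non-injective, i.e.\ on $Z = \{x \in E : \mathcal{J}^{SR}(\varphi,x) = 0\}$. On $Z$ the left side of \eqref{Eq:LipschitzArea} contributes nothing, so you must show separately that the right side does not charge $\varphi(Z)$, that is, $\mathcal{H}^\nu_\rho(\varphi(Z)) = 0$. This requires a Sard-type covering lemma (differentiability squeezes $\varphi(B(x,r))$, $x \in Z$, into an $o(r)$-neighborhood of a proper homogeneous subgroup of homogeneous dimension strictly less than $\nu$, whence the measure estimate), and it is absent from your plan; note also that your countable decomposition into injectivity pieces only makes sense after $Z$ has been removed. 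Finally, two points deserving a sentence each rather than silence: measurability of $\varphi(E)$ and of the multiplicity function $y \mapsto \#\bigl(\varphi^{-1}(y) \cap A\bigr)$ (obtainable from inner regularity by compacta together with the injectivity decomposition), without which the right-hand integral in \eqref{Eq:LipschitzArea} is not yet defined. The remaining steps --- the Lusin--Egorov localization, the replacement of the varying tangent-cone metrics by the ambient one via Theorems~\ref{Th:LocApprForDcc} and \ref{Theorem:MetricsDeviation}, and the monotone-convergence and Hahn--Banach extensions of the integrand --- are sound and match the cited proof in outline.
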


As an immediate corollary of \ref{Th:LipschitzArea} and \ref{Theorem:Main}
we obtain the following result.

\begin{theorem}
Suppose $E \subset \mathcal{M}$ is a measurable set, and the mapping
$\varphi : E \to \widetilde{\mathcal{M}}$ is approximately differentiable
almost everywhere. Then the area formula
$$
	\int\limits_E f(x) \ap \mathcal{J}^{SR}(\varphi,x) d \mathcal{H}_\rho^\nu(x)
	= \int\limits_{\widetilde{\mathcal{M}}}
	\sum_{x :\: x \in \varphi^{-1}(y) \setminus \Sigma} f(x) d\mathcal{H}_\rho^\nu(y)
$$
holds, where $f : F \to \mathbb{M}$ (here $\mathbb{M}$ is an arbitrary Banach
space) is such that function $f(x) \ap \mathcal{J}^{SR}(\varphi,x)$ is
integrable, $\mathcal{H}_\rho^\nu(\Sigma) = 0$ and
\begin{equation*}
	\ap \mathcal{J}^{SR}(\varphi,x) = \sqrt{\det ( \ap D\varphi(x)^* \ap D\varphi(x))}
\end{equation*}
is the approximate sub-Riemannian Jacobian of $\varphi$ at $x$.
\end{theorem}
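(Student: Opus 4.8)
The plan is to reduce the statement to the already established Lipschitz area formula (Theorem~\ref{Th:LipschitzArea}) by cutting $E$ into countably many pieces on which $\varphi$ is Lipschitz. First I would invoke the implication $1)\Rightarrow 3)$ of Theorem~\ref{Theorem:WorkResultDouble}: since $\varphi$ is approximately differentiable almost everywhere, there is a disjoint sequence of measurable sets $Q_1,Q_2,\dots$ with $\mathcal{H}^\nu\bigl(E\setminus\bigcup_i Q_i\bigr)=0$ and each restriction $\varphi|_{Q_i}$ Lipschitz with respect to $d_{cc}$ and $\widetilde d_{cc}$. Refining this family (intersecting it with a countable cover of $\mathcal{M}$ by the compact neighbourhoods of Theorem~\ref{Th:CmpDinftyAndDcc} and of the preceding proposition on $d_\rho$) I may assume each $Q_i$ lies in a neighbourhood on which $d_{cc}\asymp d_\rho$ and $\widetilde d_{cc}\asymp\widetilde d_\rho$, so that every $\varphi|_{Q_i}$ is also Lipschitz with respect to $d_\rho,\widetilde d_\rho$, exactly as Theorem~\ref{Th:LipschitzArea} requires.

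Set $E_0=E\setminus\bigcup_i Q_i$ and let $\Sigma$ be the union of $E_0$ with the non-density points of the sets $Q_i$; then $\mathcal{H}^\nu(\Sigma)=0$, and since $\mathcal{H}_\rho^\nu$ and $\mathcal{H}_{cc}^\nu$ are mutually absolutely continuous (Ball--Box theorem, via the strictly positive density $\mathcal{D}_{\rho,cc}$), also $\mathcal{H}_\rho^\nu(\Sigma)=0$. At every density point $x\in Q_i\setminus\Sigma$ the approximate differential $\ap D_x\varphi$ coincides with the classical sub-Riemannian differential $D_x(\varphi|_{Q_i})$ of the Lipschitz restriction --- this is precisely how the approximate differential was produced in the proof of Theorem~\ref{Theorem:WorkResultDouble} (cf.\ the proof of Theorem~\ref{Theorem:ApprStepanoffLike}). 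Consequently $\ap\mathcal{J}^{SR}(\varphi,x)=\mathcal{J}^{SR}(\varphi|_{Q_i},x)$ for $\mathcal{H}_\rho^\nu$-almost every $x\in Q_i$.

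Next I would apply Theorem~\ref{Th:LipschitzArea} to each $\varphi|_{Q_i}$ and sum over $i$. On the source side $\sum_i\int_{Q_i}f\,\ap\mathcal{J}^{SR}(\varphi,\cdot)\,d\mathcal{H}_\rho^\nu=\int_E f\,\ap\mathcal{J}^{SR}(\varphi,\cdot)\,d\mathcal{H}_\rho^\nu$, because $\mathcal{H}_\rho^\nu(E_0)=0$. On the target side, interchanging the countable sum with the integral and using that the $Q_i$ are pairwise disjoint, one has $\bigsqcup_i(\varphi|_{Q_i})^{-1}(y)=\varphi^{-1}(y)\cap\bigcup_i Q_i=\varphi^{-1}(y)\setminus\Sigma$ for every $y$, so the iterated multiplicity sums collapse to $\sum_{x\in\varphi^{-1}(y)\setminus\Sigma}f(x)$. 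This gives the asserted identity.

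The hard part will be the target-side bookkeeping: justifying the interchange of summation and integration and verifying that the fibrewise multiplicities reassemble correctly. I would handle this by first treating $f\ge 0$ (monotone convergence / Tonelli) and then splitting a general $f$ into positive and negative parts, the integrability of $f\,\ap\mathcal{J}^{SR}(\varphi,\cdot)$ ensuring absolute convergence of all the series involved. The only genuinely geometric input beyond the Lipschitz case is the almost-everywhere identity $\ap\mathcal{J}^{SR}(\varphi,\cdot)=\mathcal{J}^{SR}(\varphi|_{Q_i},\cdot)$ on each piece, which is already contained in the construction of $\ap D_x\varphi$; everything else is the routine measure-theoretic assembly of a countable decomposition.
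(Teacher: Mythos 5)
Your proposal is correct and takes essentially the same route as the paper: both decompose $E$ via the implication $1)\Rightarrow 3)$ (Theorem~\ref{Theorem:Main}) into a null set $\Sigma$ and countably many disjoint pieces on which $\varphi$ is Lipschitz, apply the Lipschitz area formula (Theorem~\ref{Th:LipschitzArea}) on each piece, and sum. The paper's own proof is a terse four-line computation that silently uses the points you make explicit --- that $\ap\mathcal{J}^{SR}(\varphi,\cdot)=\mathcal{J}^{SR}(\varphi|_{E_i},\cdot)$ almost everywhere on each piece, that the pieces can be taken $d_\rho$-Lipschitz via the local equivalence of $d_{cc}$ and $d_\rho$, and that the interchange of sum and integral is justified --- so your additional detail is a faithful filling-in of the same argument, not a different one.
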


\begin{proof}
By Theorem~\ref{Theorem:Main}, there is a sequence of disjoint sets
$\Sigma$, $E_1,E_2,\dots$ such that
$E = \Sigma \cup \bigcup\limits_{i=1}^\infty E_i$,
$\mathcal{H}_\rho^\nu(\Sigma) = 0$ and every restriction $\varphi|_{E_i}$
is a Lipschitz mapping. Then, by Theorem~\ref{Th:LipschitzArea}, we have
\begin{multline*}
	\int\limits_{E} f(x) \ap \mathcal{J}^{SR}(\varphi,x) d \mathcal{H}_\rho^\nu(x)
	= \sum_{i=1}^\infty \int\limits_{E_i} f(x) \mathcal{J}^{SR}(\varphi,x) d \mathcal{H}_\rho^\nu(x) \\
	= \sum_{i=1}^\infty \int\limits_{\widetilde{\mathcal{M}}}
	  \sum_{x:\: x \in \varphi^{-1}(y) \cap E_i} f(x) d\mathcal{H}_\rho^\nu(y)
	= \int\limits_{\widetilde{\mathcal{M}}}
	  \sum_{x:\: x \in \varphi^{-1}(y) \setminus \Sigma} f(x) d \mathcal{H}_\rho^\nu(y).
\end{multline*}
\end{proof}

% === References ===
%\newpage

\label{Page:DocumentEnd}

\end{document}